\newtheorem{remark}{Remark}[section]
\newtheorem{lemma}{Lemma}[section]
\newtheorem{theorem}{Theorem}[section]
\newtheorem{prop}{Proposition}[section]
\newcommand{\set}[1]{\left\{#1\right\}}
\newcommand{\dual}[1]{\langle#1\rangle}
\newcommand{\jump}[1]{\llbracket #1 \rrbracket}
\newcommand{\mean}[1]{\{#1\}}
\newcommand{\abs}[1]{\lvert#1\rvert}
\newcommand{\norm}[1]{\lVert#1\rVert}
\newcommand{\nder}[1]{\disp\frac{\partial#1}{\partial \n}}
\newcommand{\n}{\boldsymbol{n}}
\newcommand{\tg}{\boldsymbol{t}}
\newcommand{\x}{\boldsymbol{x}}
\newcommand{\y}{\boldsymbol{y}}
\newcommand{\curl}{\mathbf{curl}}
\newcommand{\cT}{\mathcal{T}}
\newcommand{\cF}{\mathcal{F}}
\newcommand{\cE}{\mathcal{E}}
\newcommand{\cG}{\mathcal{G}}
\newcommand{\cP}{\mathcal{P}}
\newcommand{\bsigma}{\boldsymbol{\sigma}}
\newcommand{\btau}{\boldsymbol{\tau}}
\newcommand{\bSigma}{\boldsymbol{\Sigma}}
\newcommand{\bPi}{\boldsymbol{\Pi}}
\newcommand{\bbeta}{\boldsymbol{\beta}}
\newcommand{\disp}{\displaystyle}
\newcommand{\pw}{\mathfrak{b}}
\date{}
\title{Symmetric coupling of LDG-FEM and DG-BEM
\thanks{Partial support by the following institutions is gratefully acknowledged:
CONICYT through projects Anillo ACT1118 (ANANUM) and Fondecyt 1110324,
Spain's Ministry of Education through Project MTM2010-18427, and
NSF through grant DMS 1216356.}}
\author{{\sc Norbert Heuer}\thanks{
Facultad de Matem\'aticas, Pontificia Universidad Cat\'olica de Chile, 
Avenida Vicu\~na Mackenna 4860, Santiago, Chile,
e-mail: {\tt nheuer@mat.puc.cl}}, $\,\,$
{\sc Salim Meddahi}\thanks{Departamento de Matem\'aticas, Facultad de Ciencias,
Universidad de Oviedo, Calvo Sotelo s/n, Oviedo, Espa\~na,
e-mail: {\tt salim@uniovi.es}}\\ and \\
{\sc Francisco-Javier Sayas}\thanks{Department of Mathematical Sciences,
University of Delaware, Newark DE 19716, USA, e-mail: {\tt
fjsayas@math.udel.edu}}
}
\begin{document}

\maketitle

\begin{abstract}
 We analyze a discontinuous Galerkin FEM-BEM scheme for a second order elliptic  
 transmission problem posed in the three-dimensional space.
 The symmetric variational formulation is discretized by
 nonconforming Raviart-Thomas finite elements 
 on a general partition of the interior domain coupled with discontinuous boundary elements
 on an independent quasi-uniform mesh of the transmission interface. 
 We prove (almost) quasi-optimal convergence of the method 
 and confirm the theory by a numerical experiment.
 In addition we consider the case when continuous rather than
 discontinuous boundary elements are used.
\end{abstract}

\section{Introduction}

Discontinuous Galerkin (DG) methods are known to be  flexible and efficient solvers for      
a wide range of partial differential equations. Among their advantages, when applied to 
second order elliptic problems, we emphasize that  
they  are locally conservative, they can handle general meshes with hanging nodes and they 
allow the use of different polynomial degrees in each element.

DG methods can be coupled with the boundary element method (BEM) in different ways \cite{CockburnSayas}.
In \cite{GaticaSayas} it was shown that it is possible to benefit from the features
highlighted above when approximating non-homogeneous
(and even nonlinear \cite{GaticaHeuerSayas}) exterior 
elliptic problems  if a local discontinuous Galerkin method (LDG) is used as  
an interior solver in combination with the BEM.  

The symmetric LDG-BEM formulation is obtained by rewriting locally the elliptic problem in mixed form
and considering a Calder\'on identity on the boundary. In this way, one ends up with  a system of 
two variational equations in the interior domain (involving both the potential and the flux 
as independent variables) and a system of two boundary integral equations  relating the Cauchy datum 
of the problem on the coupling interface. In the first LDG-BEM formulation \cite{GaticaSayas}, 
the coupling between the two systems 
is performed by using Costable's approach. From the DG point of view, this amounts to using the 
normal derivative of the solution on the coupling boundary 
as a Neumann datum when defining the numerical fluxes 
for the LDG method. In the resulting coupled scheme, the normal derivative becomes an 
independent unknown and the other BEM variable (the discrete trace)    
must match the discrete potential that comes 
from the LDG method. The problem is that these unknowns are of different nature: the restriction of the LDG 
approximation of the potential to the coupling interface is discontinuous while the BEM discretization  
is conforming and produces a continuous and piecewise polynomial approximation of this variable. 
This inconvenience is addressed in \cite{GaticaSayas} by introducing a further unknown that acts as 
a Lagrange multiplier and enforces weakly the imposition of the missing transmission condition. 
A later paper \cite{GaticaHeuerSayas} eliminated the need of the Lagrange multiplier by demanding that the 
discontinuous piecewise polynomial functions that approximate the potential in the LDG method be continuous 
at the coupling interface. Here, the normal derivative is the only boundary unknown, which 
reduces the number of unknown functions by two with respect to the first version. However, 
in order to deal properly with this formulation in practice
a Lagrange multiplier must come again into play. Moreover, 
this formulation imposes for the BEM the mesh inherited from the interior partition of the domain, which 
reduces much of the flexibility provided by the discrete Galerkin method near the coupling boundary. 
Finally, we point out that recently non-symmetric couplings of DG with BEM have also been studied, cf.  
\cite{HeuerSayas} and the references therein.

In this paper, following \cite{salim},  
we take advantage of the fact that the flux variable is an LDG active unknown (as in the traditional mixed formulation) 
and consider a dual approach: we define the numerical fluxes by considering the trace of the solution on the coupling 
boundary as Dirichlet datum. Hence, as opposed to the former strategy, the trace of the solution is an independent 
variable while the LDG normal flux and the normal derivative must be merged on the coupling boundary. Notice that 
in this case both variables are (naturally) nonconforming and no Lagrange multiplier or special restriction 
is needed to match them. Consequently, the resulting numerical scheme enjoys all the good properties of a typical 
DG method and allows for using an independent boundary mesh. Moreover, one can employ both
a conforming or a nonconforming approximation on the boundary. 

In this paper, we  take advantage of the results from \cite{NorbertSalim} 
to deal with a DG finite element method on the boundary, the resulting scheme will be referred to 
as the LDG-FEM/DG-BEM method. To our knowledge, this is the first FEM-BEM scheme that 
combines DG approximations on the boundary and in the interior. 
Technical difficulties that already arose in \cite{NorbertSalim}, oblige 
us to consider conforming and quasi-uniform families of triangulations 
on the coupling boundary. Following the technique from \cite{ChoulyH_12_NDD}, this can be relaxed 
to meshes that are conforming and quasi-uniform on planar sub-surfaces of the coupling interface. 
However, for simplicity, the technical details for such an extension are omitted here and we will consider 
globally conforming and quasi-uniform boundary meshes.
Fortunately, restrictions on the boundary mesh have no negative impact on the triangulation of the interior 
domain since the two meshes are related by a mild local condition, see \eqref{localUnif} below. 
Finally, we analyze the scheme that is obtained by using a conforming rather than non-conforming BEM on the 
interface. The resulting scheme will be referred to as the LDG-FEM/BEM method.

The paper is organized as follows. In Section \ref{s2} we present our model problem
and recall some basic properties of boundary integral operators.
For simplicity of exposition we will restrict our interest to a three-dimensional problem posed in the whole space. 
 In Section \ref{s3} we derive the LDG-FEM/DG-BEM scheme and prove that it admits a unique solution. 
Stability and a priori error estimates are proved in Section \ref{s4}. 
In Section \ref{s5} we show that the same technical arguments provide 
(without the quasi-uniformity requirement for the meshes on the coupling boundary) a convergence result 
for the LDG-FEM/BEM scheme. 
Finally, numerical experiments are reported in Section \ref{s6}.


Given a real number $r\geq 0$ and a polyhedron $\mathcal O\subset \mathbb R^d$, $(d=2,3)$,  
we denote the norms and seminorms of the usual Sobolev space 
$H^r(\mathcal O)$ by $\|\cdot \|_{r,\mathcal O}$ and $|\cdot|_{r,\mathcal O}$ respectively   
(cf. \cite{McLean}). We use the convention $L^2(\mathcal O):= H^0(\mathcal O)$ 
and let $(\cdot,\cdot)_{\mathcal O}$ be the 
inner product in $L^2(\mathcal{O})$. 
We recall that, for any $t \in [-1,\: 1 ]$, the spaces $H^{t}(\partial \mathcal O)$  
have an intrinsic definition (by localization) on the Lipschitz surface $\partial \mathcal O$ 
due to their invariance under Lipschitz coordinate transformations. Moreover, for all $0< t\leq 1$, 
$H^{-t}(\partial\mathcal O)$ is the dual of $H^{t}(\partial\mathcal O)$ with respect
to the pivot space $L^2(\partial \mathcal{O})$.
Also, $\dual{\cdot, \cdot}_{\partial \mathcal{O}}$ denotes both the $L^2(\partial \mathcal{O})$ inner product 
and its extension to the duality
pairing of $H^{-t}(\partial \mathcal{O})\times H^{t}(\partial \mathcal{O})$.


\section{The model problem}\label{s2}
Let $\Omega\subset \mathbb{R}^3$ be a bounded polyhedral domain with a Lipschitz boundary $\Gamma$. 
We denote by $\mathbf{n}$ the unit normal vector on $\Gamma$ that points towards 
$\Omega^e:= \mathbb{R}^3\setminus \bar \Omega$.
For the sake of simplicity, we assume that $\Omega^e$ is connected.
We consider the transmission problem
\begin{equation}\label{ModelProblem}
\begin{array}{rcll}
-\Delta u &=& f         &\text{in $\Omega$}\\[2ex]
        u &=& u^e + g_0 &\text{on $\Gamma$}\\[2ex]
        \nder{u} &=& \nder{u^e} + g_1 &\text{on $\Gamma$}\\[2ex]
        - \Delta u^e &=& 0 &\text{in $\Omega^e$}\\[2ex]
        u^e &=& O(\disp\frac{1}{\abs{\x}})  &\text{as $\abs{\x}\to \infty$},
\end{array}
\end{equation}
where $f\in L^2(\Omega)$, $g_0\in H^{1/2}(\Gamma)$ and $g_1\in L^2(\Gamma)$ are given functions.

We can write the problem in $\Omega$ by introducing the flux $\bsigma$ as a new variable:
\begin{equation*}\label{inOmega}
\begin{array}{rcll}
\bsigma &=& \nabla u &\text{in $\Omega$},\\[2ex]
-\textrm{div} \bsigma &=& f  &\text{in $\Omega$}.
\end{array}
\end{equation*}
With the notation 
\[
\lambda := \nder{u^e}\quad \text{and} \quad \psi = u^e|_\Gamma,
\]
the transmission conditions are 
\begin{equation}\label{trans}
\begin{array}{rcll}
u &=& \psi + g_0 &\text{on $\Gamma$}\\[2ex]
\bsigma \cdot \n &=& \lambda + g_1  &\text{on $\Gamma$}.
\end{array}
\end{equation}
Using the integral representation of the harmonic function  $u^e$ in $\Omega^e$ gives
\begin{equation*}\label{IntegralRep}
u^e = \Psi_{DL}(\psi) - \Psi_{SL}(\lambda)  \quad\text{in $\Omega^e$}
\end{equation*}
where
\[
\Psi_{SL}(\xi)(\x) := \int_{\Gamma} E(\abs{\x-\y}) \xi(\y)\, dS(\y)
\quad \text{and} \quad 
\Psi_{DL}(\varphi)(x) := \int_{\Gamma} \disp\frac{\partial E(\abs{\x-\y})}{\partial \n(\y)}\, \varphi(\y)\, dS(\y) 
\]
are the single and double layer potentials, respectively, and $E(\abs{\x}):= \frac{1}{4\pi}\frac{ 1}{\abs{\x}}$ 
is the fundamental solution of the Laplace operator. The jump properties of the single and double layer potentials 
across $\Gamma$ provide the following integral equations relating the Cauchy data on this boundary:
\begin{align}
   \label{inteq1}
   \psi &= ( \frac{\text{id}}{2} + K) \psi - V \lambda \\
   \label{inteq2}
   \lambda &= -W\psi + (\frac{\text{id}}{2} - K') \lambda
\end{align}
where $V$, $K$, $K'$ are the boundary integral operators representing the single, double and adjoint of the 
double layer, respectively, and $W$ is the hypersingular operator.

Let us recall some important properties of the boundary integral operators, see \cite{McLean} for details.
The boundary integral operators are formally defined at almost every point $\x \in \Gamma$ by
\[
 V\xi (\x):= \int_\Gamma E(\abs{\x-\y}) \xi(\y)\, dS(\y), \qquad K\varphi (\x):= \int_\Gamma 
 \disp\frac{\partial E(\abs{\x-\y})}{\partial \n(\y)}\, \varphi(\y)\, dS(\y),
\]
\[
 K'\xi (\x):= \int_\Gamma 
 \disp\frac{\partial E(\abs{\x-\y})}{\partial \n(\x)}\, \xi(\y)\, dS(\y), \quad 
 W \varphi (\x) := - \disp\frac{\partial}{\partial \n(\x)} \int_\Gamma 
 \disp\frac{\partial E(\abs{\x-\y})}{\partial \n(\y)}\, \varphi(\y)\, dS(\y).
\]
They are  bounded as mappings $V:\, H^{-1/2}(\Gamma)\to H^{1/2}(\Gamma)$, 
$K:\, H^{1/2}(\Gamma) \to H^{1/2}(\Gamma)$ and $W:\, H^{1/2}\to H^{-1/2}(\Gamma)$. 
\medskip
The single layer operator is coercive, 
there exists $C_0>0$ such that
\begin{equation}\label{eq-coercive-1}
 \dual{\chi, V\chi}_\Gamma \,\ge\, C_0\, \norm{\chi}_{-1/2,\Gamma}^2\quad 
 \forall\, \chi \,\in\,   H^{-1/2}(\Gamma)
\end{equation}
and
 \begin{equation}\label{eq-coercive-2}
\dual{W \varphi, \varphi}_\Gamma \,+\, \left(\int_\Gamma \varphi \, \right)^2 
\,\ge\, C_0\,\norm{\varphi}_{1/2,\Gamma}^2\quad 
 \forall\,\varphi \,\in\, H^{1/2}(\Gamma).
 \end{equation} 
Moreover,  $V:\, H^{s-1}(\Gamma)\to H^s(\Gamma)$ is bounded for any $0\leq s \leq 1$. 
We recall that the operators $V$ and $W$ are related by
\[
 W = \textrm{curl}_\Gamma V \curl_\Gamma
\]
where $\curl_\Gamma$ is the surface curl operator and $\textrm{curl}_\Gamma$ is its adjoint operator,
cf. \cite{Nedelec_82_IEN}.
Consequently, 
\[
 \dual{W\psi, \varphi}_\Gamma = \dual{\curl_\Gamma \psi, V \curl_\Gamma \varphi}_\Gamma, \qquad 
 \forall \psi, \varphi \in H^{1/2}(\Gamma).
\]

\section{The LDG-FEM/DG-BEM formulation}\label{s3}
We denote by $\mathcal{T}_h$ a subdivision of the domain $\bar \Omega$ into shape regular 
tetrahedra $K$ of diameter $h_K$ and unit outward normal to $\partial K$ given by $\n_K$. 
We point out 
that the partition $\mathcal{T}_h$ is not necessarily a conforming mesh of $\bar \Omega$. 
We also  introduce a  shape regular conforming quasi-uniform triangulation $\cG_h:=\set{T}$ of the interface $\Gamma$
into triangles $T$ of diameter $h_T$. The set of edges of $\cG_h$ is denoted by $\cE_h$.
The parameter $h$ represents the mesh size, i.e.,
$h:= \max_{K\in \cT_h;\; T\in\cG_h} \{h_K, h_T\}$.

Henceforth, given any positive functions $A_h$ and $B_h$ of the mesh parameter $h$, 
the notation $A_h \lesssim B_h$ means that  $A_h \leq C B_h$ with $C>0$ independent of $h$ and 
$A_h \simeq B_h$ means that $A_h \lesssim B_h$ and $B_h \lesssim A_h$.

We say that a closed subset $F\in \overline{\Omega}$ is an interior face if $F$ has a positive 2-dimensional 
measure and if there are distinct elements $K$ and $K'$ such that $F = K\cap K'$. A closed 
subset $F\in \overline{\Omega}$ is a boundary face if
there exists $K\in \cT_h$ such that $F$ is a face of $K$ and $F = K\cap \Gamma$. 
We consider the set $\cF_h^0$ of interior faces and the set $\cF_h^\partial$ of boundary faces and introduce  
\[
 \cF_h = \cF_h^0\cup \cF_h^\partial.
\]
For any element $K\in \cT_h$, we introduce the set 
\[
\cF(K):= \set{F\in \cF_h;\quad F\subset \partial K} 
\]
of faces composing the boundary of $K$.
Similarly, for any $T\in \cG_h$, we introduce the set 
\[
 \cE(T):= \set{e\in \cE_h;\quad e\subset \partial T}.
\]
We also consider for any $T\in \cG_h$,
\[
 \cF(T):= \set{F\in \cF_h^\partial;\quad F\cap T\not=\emptyset}.
\]
%
In what follows we assume that $\cT_h\cup\cG_h$ is locally quasi-uniform,
i.e., there exists $\delta>1$ independent of $h$  such that 
$
 \delta^{-1} \leq \frac{h_K}{h_{K'}} \leq \delta  
$
for each pair $K$, $K'\in \cT_h$ sharing an interior face
and
$
 \delta^{-1} \leq \frac{h_K}{h_{T}} \leq \delta  
$
for each pair $K\in \cT_h$, $T\in \cG_h$ with $K\cap T\not=\emptyset$.
This assumption implies that the sets $\cF(K)$ and $\cF(T)$ have
uniformly bounded cardinalities and that there 
exists a constant $C>0$ independent of $h$ such that 
\begin{equation}\label{localUnif}
h_F \leq h_K \leq C \delta h_F \quad \forall F\in \cF(K)
\quad
\text{and}
\quad
h_F \leq h_T \leq C \delta h_F \quad \forall F\in  \cF(T),
\end{equation}
where $h_F$ stands for the diameter of the face $F$.

For any $s\geq 0$, we consider the broken Sobolev spaces 
\[
 H^s(\mathcal{T}_h) := \prod_{K\in \mathcal{T}_h} H^s(K), \qquad  
 \mathbf{H}^s(\mathcal{T}_h) := \prod_{K\in \mathcal{T}_h} H^s(K)^3, 
 \]
 \[
 H^s(\mathcal{G}_h) := \prod_{T\in \mathcal{G}_h} H^s(T), \qquad  
 \mathbf{H}^s(\mathcal{G}_h) := 
\prod_{T\in \mathcal{G}_h} H^s(T)^3.
\]
For each $v:=\set{v_K}\in H^s(\mathcal{T}_h)$, $\btau:= \set{\btau_K}\in \mathbf{H}^s(\mathcal{T}_h)$
and $\varphi:= \set{\varphi_T}\in \mathbf{H}^s(\cG_h)$, 
the components $v_K$, $\btau_K$ and $\varphi_T$ represent the restrictions $v|_K$, $\btau|_K$ and $\varphi|_T$. 
 When no confusion arises, the restrictions of these functions will be written 
without any subscript. 
The spaces $H^s(\mathcal{T}_h)$ and $\mathbf{H}^s(\mathcal{T}_h)$ are endowed with the Hilbertian norms
\[
 \norm{v}_{s,\cT_h}^2 := \sum_{K\in \cT_h} \norm{v_K}^2_{s,K} \qquad 
 \norm{\btau}_{s,\cT_h}^2 := \sum_{K\in \cT_h} \norm{\btau_K}^2_{s,K}.
\]
The corresponding seminorms are denoted by
\[
 |v|_{s,\cT_h}^2 := \sum_{K\in \cT_h} |v_K|^2_{s,K} \qquad 
 |\btau|_{s,\cT_h}^2 := \sum_{K\in \cT_h} |\btau_K|^2_{s,K}.
\]
Similarly, the norms and the seminorms on $H^s(\mathcal{G}_h)$  are given by
\[
 \norm{\varphi}_{s,\cG_h}^2 := \sum_{T\in \cG_h} \norm{\varphi_T}^2_{s,T} \qquad 
 |\varphi|_{s,\cG_h}^2 := \sum_{T\in \cG_h} |\varphi_T|^2_{s,T}.
\]
Identical definition for the norms and the seminorms are considered on the vectorial counterpart of $H^s(\mathcal{G}_h)$.
We use the convention $H^0(\mathcal{T}_h)= L^2(\mathcal{T}_h)$ for all the spaces defined previously.

We will also need the spaces given on the skeletons of the triangulations $\cT_h$ and $\cG_h$ by 
\[
 L^2(\cF_h):= 
 \prod_{F\in \mathcal{F}_h} L^2(F), \qquad  L^2(\cF_h^0):= 
 \prod_{F\in \mathcal{F}_h^0} L^2(F), \qquad L^2(\cE_h):= 
 \prod_{e\in \mathcal{E}_h} L^2(e)
 \]
 \[
 \mathbf{L}^2(\cF_h):= \prod_{F\in \mathcal{F}_h} L^2(F)^3, \qquad 
 \mathbf{L}^2(\cF_h^0):= \prod_{F\in \mathcal{F}_h^0} L^2(F)^3, \qquad 
 \mathbf{L}^2(\cE_h):= \prod_{e\in \mathcal{E}_h} L^2(e)^3.
\]
Similarly, the components $\mu_F$ and $\bbeta_F$ of $\mu := \set{\mu_F}\in L^2(\cF_h)$ and 
$\bbeta:= \set{\bbeta_F} \in \mathbf{L}^2(\cF_h)$ coincide with the restrictions $\mu|_F$ and 
$\bbeta|_F$ and the components $\varphi_e$ and $\boldsymbol{\psi}_e$ of 
$\varphi := \set{\varphi_e}\in L^2(\cE_h)$ and $\boldsymbol{\psi}:=\set{\boldsymbol{\psi}_e}\in 
\mathbf{L}^2(\cE_h)$ 
are given by the restrictions $\varphi|_e$ and $\boldsymbol{\psi}|_e$
respectively.
We introduce the  inner products
\[
 \dual{\lambda, \mu}_{\cF_h} := \sum_{F\in \cF_h} \dual{\lambda_F, \mu_F}_F, \quad 
 \dual{\lambda, \mu}_{\cF^0_h} := \sum_{F\in \cF^0_h} \dual{\lambda_F, \mu_F}_F \quad 
 \text{and} \quad \dual{\psi, \varphi}_{\cE_h} := \sum_{e\in \cE_h} \dual{\psi_e, \varphi_e}_e.
 \]
 and the corresponding norms 
 \[
  \norm{\mu}^2_{0,\cF_h}:= \dual{\mu, \mu}_{\cF_h}, 
\qquad 
  \norm{\mu}^2_{0,\cF^0_h}:= \dual{\mu, \mu}_{\cF^0_h}\quad  \text{and} \quad 
  \norm{\varphi}^2_{0,\cE_h}:= \dual{\varphi, \varphi}_{\cE_h}
 \]
 on $L^2(\cF_h)$, $L^2(\cF^0_h)$ and $L^2(\cE_h)$ respectively.
 
Given $v\in H^1(\cT_h)$, we define averages $\mean{v}\in L^2(\cF_h^0)$ and jumps $\jump{v}\in \mathbf{L}^2(\cF_h^0)$ 
by
\[
 \mean{v}_F := 1/2(v_K + v_{K'}) \quad \text{and} \quad \jump{v}_F := v_K \n_K + v_{K'}\n_{K'} 
 \quad \forall F \in \cF(K)\cap \cF(K').
\]
For vector valued functions $\btau\in \mathbf{H}^1(\cT_h)$, we define $\mean{\btau}\in \mathbf{L}^2(\cF_h^0)$ and 
$\jump{\btau}\in L^2(\cF_h^0)$ by
\[
 \mean{\btau}_F := 1/2(\btau_K + \btau_{K'}) \quad \text{and} \quad \jump{\btau}_F := 
 \btau_K\cdot \n_K + \btau_{K'}\cdot\n_{K'} 
 \quad \forall F \in \cF(K)\cap \cF(K').
\]
Similarly, given $\varphi\in H^1(\cG_h)$, we define averages $\mean{\varphi}\in L^2(\cE_h^0)$ and jumps 
$\jump{\varphi}\in \mathbf{L}^2(\cE_h)$ by 
\[
 \mean{\varphi}_e := 1/2(\varphi_T + \varphi_{T'}) \quad \text{and} \quad \jump{\varphi}_e := 
 \varphi_T \tg_e + \varphi_{T'} \tg_{e'} 
 \quad \forall e \in \cE(T)\cap \cE(T').
\]
Here, $\tg_e$ is the tangent unit vector along the edge $e$ given by 
$\tg_e = (\n \times \n_{\partial F})|_e$,  where $\n_{\partial F}$ is 
the   outward unit  normal vector to the boundary of the face $F$ in the 
hyperplane defined by $\n|_F$. 

Hereafter, given an integer $k\geq 0$ and a domain 
$D\subset \mathbb{R}^3$, $\cP_k(D)$ denotes the space of polynomials of degree at most $k$ on $D$.
We consider the linear spaces
\[
 \cP_0(\cF_h):= \prod_{F\in \mathcal{F}_h} \cP_0(F) \quad \text{and} \quad  \cP_0( \cF_h^0):= 
 \prod_{F\in \mathcal{F}_h^0} \cP_0(F),
\]
and for any $m\geq 1$, we introduce the finite element spaces
\[
 V_h := \prod_{K\in \mathcal{T}_h} \cP_m(K) \quad \text{and} \quad 
 \bSigma_h := \prod_{K\in \cT_h} \mathbf{RT}_m(K),
\]
where 
\[
 \mathbf{RT}_m(K):=\set{\cP_{m-1}(K)^3 + \x\cP_{m-1}(K)}
\]
is the finite element of Raviart-Thomas of order $m-1$.

We consider the following formulation in the bounded domain $\Omega$: find $(\bsigma_h, u_h) \in \bSigma_h\times V_h$ 
such that for each $K\in \cT_h$ there holds
\begin{equation}\label{formT1}
 \begin{array}{rcll}
(\bsigma_h, \btau)_K -  (\nabla u_h, \btau)_K + \dual{u_h -\bar u, \btau \cdot \n_K}_{\partial K} &=& 0 & 
  \forall \btau \in \bSigma_h\\[2ex]
\disp  (\bsigma_h, \nabla v)_K - \dual{ \bar \bsigma \cdot \n_K, v}_{\partial K} &=&  (f, v)_K & \forall v \in V_h.
 \end{array}
\end{equation}
Before defining  the numerical traces $\bar u$ and $\bar \bsigma$ let us consider the finite element approximation of 
the boundary integral equations \eqref{inteq1} and \eqref{inteq2}. 
 
We consider the operator $\boldsymbol T$ defined for any $\varphi \in H^1(\cG_h)$ by 
\[
 (\boldsymbol T\varphi)|_e := (V \curl_h \varphi)|_e  \quad (e\in \cE_h),
\]
where $\curl_h$ stands for the element-wise $\curl$ operator:
\[
 (\curl_h \varphi)|_F := \curl_F(\varphi|_F), \qquad \forall F\in \cG_h.
\]
We consider two sequences of boundary element spaces 
\begin{align*}
 \Lambda_h &:= \set{\btau\cdot \n;\quad  \btau \in \bSigma_h } \subset H^{-1/2}(\Gamma),\\
 \Psi_h    &:= \prod_{T\in \mathcal{G}_h} \cP_m(T) \cap L^{2}_0(\Gamma)
 \subset H_0^{1/2}(\cG_h):=H^{1/2}(\cG_h)\cap L^2_0(\Gamma) 
\end{align*}
with $L^{2}_0(\Gamma):=\set{\varphi\in L^{2}(\Gamma); \quad \dual{1,\varphi}_\Gamma = 0}$.
We then replace \eqref{inteq1}, \eqref{inteq2} by the Galerkin equations:
find $\psi_h\in \Psi_h$, $\lambda^\star_h\in \Lambda_h$ such that 
\begin{equation}\label{formB}
 \begin{array}{rcll}
 \dual{ \psi_h, \btau\cdot \n}_{\Gamma} &=& \dual{( \frac{\text{id}}{2} + K) \psi_h, \btau\cdot \n}_{\Gamma} - 
 \dual{V (\lambda^\star_h -  g_1), \btau\cdot \n}_{\Gamma} 
 &\forall \btau \in \bSigma_h\\[2ex]
\dual{(\lambda^\star_h - g_1), \varphi}_{\Gamma} &=& -d(\psi_h,\varphi) + 
\dual{(\frac{\text{id}}{2} - K') (\lambda^\star_h-g_1), \varphi}_{\Gamma} 
&\forall \varphi \in \Psi_h.
 \end{array}
\end{equation}
Here we used the transmission condition for $\lambda$, \eqref{trans}, and $\lambda^\star_h$ will be an
approximation to $\lambda+g_1=\bsigma\cdot\n$. Furthermore,
\begin{equation}\label{d}
 d(\psi, \varphi) := \dual{V\curl_h \psi, \curl_h\varphi }_\Gamma + \dual{\boldsymbol T\psi, \jump{\varphi}}_{\cE_h} 
 - \dual{\jump{\psi}, \boldsymbol T \varphi}_{\cE_h} + \dual{\nu\jump{\psi}, \jump{\varphi}}_{\cE_h}
\end{equation}
and $\nu\in \prod_{e\in \cE_h} \cP_0(e)$ is a piecewise constant function 
such that 
\begin{equation}\label{nu}
\nu\simeq 1.  
\end{equation}
Let $\alpha\in \cP_0(\cF_h)$,  and $\bbeta \in \cP_0( \cF_h^0)^3$ be given 
piecewise constant functions  satisfying
\begin{equation}\label{coef}
\max_{F\in \cF_h^0} |\bbeta_F| \lesssim 1 \quad \text{and} 
\quad   h_\cF\, \alpha \simeq 1,
\end{equation}
where $h_\cF\in \cP_0(\cF_h)$ is defined by $h_\cF|_F := h_F$ $, \forall F \in \mathcal{F}_h$.

We substitute $\bar u$ and $\bar \bsigma$ given by
\[
 \bar u_F = \begin{cases}
           \mean{u_h}_F + \bbeta_F\cdot \jump{u_h}_F &\text{if $F\in \cF_h^0$}\\[2ex]
           \psi_h+g_0 &\text{if $F\in \cF_h^{\partial}$}
          \end{cases}
\]
and
\[
 \bar \bsigma_F = \begin{cases}
                   \mean{\bsigma_h}_F - \jump{\bsigma_h}_F \bbeta_F - \alpha_F \jump{u_h}_F &\text{if $F\in \cF_h^0$}\\[2ex]
                   \bsigma_h|_F - \alpha_F (u_h|_F-\psi_h-g_0) \n|_F &\text{if $F\in \cF_h^{\partial}$}
                  \end{cases}
\]
in \eqref{formT1} and add the equations over $K\in \cT_h$ to obtain the 
following LDG formulation of the problem in $\Omega$: find $(\bsigma_h,u_h)\in \bSigma_h\times V_h$ such that 
\begin{equation}\label{ldgOmega}
 \begin{array}{rcll}
(\bsigma_h, \btau)_{\Omega} - \left\{ (\nabla_h u_h, \btau)_{\Omega} - S(u_h, \btau) \right\}  
   - \dual{\psi_h, \btau\cdot \n}_\Gamma &=& \dual{g_0, \btau\cdot \n}_\Gamma \\[2ex]
\left\{(\nabla_h v, \bsigma_h)_{\Omega}  - S(v, \bsigma_h)\right\} + \boldsymbol{\alpha}_0(u_h, v)  
+\dual{\alpha (u_h - \psi_h), v}_\Gamma &=& (f,v)_{\Omega} + \dual{\alpha g_0, v}_\Gamma,
 \end{array}
\end{equation}
for all $\btau \in \bSigma_h$ and $v\in V_h$, where $\nabla_h$ stands for the element-wise gradient and 
\[
 S(u, \btau) := \dual{\jump{u}, \mean{\btau} - \jump{\btau}\bbeta}_{\cF_h^0} + \dual{u, \btau\cdot \n}_\Gamma, 
 \qquad \forall u\in H^1(\cT_h),\, \forall \btau \in \mathbf{H}^1(\cT_h),
\]
\[
 \boldsymbol{\alpha}_0(u,v) = \dual{ \alpha \jump{u}, \jump{v}}_{\cF_h^0}   
 \qquad \forall u,v\in H^1(\cT_h),
\] 
In order to simplify the notations, let us denote by $\hat{u}_h=(u_h,\psi_h)$ and $\hat{v}:=(v,\varphi)$ couples 
of elements from $V_h \times \Psi_h$. We also consider 
\[
 \jump{\hat{u}_h}: = \begin{cases}
                \jump{u_h}_F & \text{if $F\in \cF_h^0$}\\
                (u_h - \psi_h)\n|_F & \text{if $F\in \cF_h^{\partial}$}
               \end{cases}
               \quad 
               \text{and}
               \quad
               \jump{\hat{v}}: = \begin{cases}
                \jump{v}_F & \text{if $F\in \cF_h^0$}\\
                (v - \varphi)\n|_F & \text{if $F\in \cF_h^{\partial}$}
               \end{cases}.
\]
We now couple \eqref{formB} and \eqref{ldgOmega} by
identifying $\lambda^\star_h=\bsigma_h\cdot\n$ and by approximating the transmission condition for the traces 
in \eqref{trans} by
\begin{equation*}
\dual{\alpha (u_h - \psi_h), \varphi}_\Gamma = \dual{\alpha g_0, \varphi}_\Gamma \quad \forall \varphi \in \Psi_h.
\end{equation*}
A combination of \eqref{formB} and \eqref{ldgOmega} then yields our LDG-FEM/DG-BEM coupling:
find $(\bsigma_h, \hat{u}_h)\in \bSigma_h\times (V_h \times \Psi_h)$ such that
\begin{equation}\label{ldg-FemBem}
 \begin{array}{rcll}
 a(\bsigma_h,\btau) + b(\btau, \hat{u}_h) &=& \dual{g_0, \btau\cdot \n}_{\Gamma} + 
 \dual{Vg_1, \btau \cdot \n}_{\Gamma} &\forall \btau\in \bSigma_h\\[2ex]
 -b(\bsigma_h, \hat{v})+ c(\hat{u}_h, \hat{v}) &=& (f,v)_{\Omega} + \dual{\alpha g_0, v-\varphi}_{\Gamma}
 +\dual{( \frac{\text{id}}{2} + K')g_1, \varphi}_{\Gamma}
 & \forall \hat{v}\in V_h\times \Psi_h.
 \end{array}
\end{equation}
Here,
\[
 a(\bsigma_h,\btau):= (\bsigma_h, \btau)_{\Omega} + \dual{\btau\cdot \n, V (\bsigma_h\cdot \n)}_{\Gamma},
\quad 
 c(\hat{u}_h, \hat{v}) :=  \dual{\alpha \jump{\hat{u}_h},\jump{\hat{v}}}_{\cF_h} + d(\psi_h, \varphi)
\]
and
\[
 b(\btau, \hat{v}) := -(\nabla_h v, \btau)_{\Omega} + \dual{\btau\cdot \n, ( \frac{\text{id}}{2} - K) \varphi}_{\Gamma} 
 +\dual{\jump{v}, \mean{\btau} - \jump{\btau}\bbeta}_{\cF_h^0} + \dual{ \jump{\hat{v}}, \btau}_{\Gamma}.
\]
Problem \eqref{ldg-FemBem} can be rewritten in the more compact form as follows: Find $\bsigma_h\in \bSigma_h$ and 
$\hat{u}_h=(u_h, \psi_h)\in V_h\times \Psi_h$ such that 
\begin{equation}\label{compactForm}
 A(\bsigma_h, \hat{u}_h; \btau, \hat{v}) = F(\btau, \hat{v}),
\end{equation}
by setting 
\begin{equation}\label{A}
 A(\bsigma_h, \hat{u}_h; \btau, \hat{v}):= a(\bsigma_h,\btau) + b(\btau, \hat{u}_h) - b(\bsigma_h, \hat{v}) + c(\hat{u}_h, \hat{v})
\end{equation}
and 
\[
 F(\btau, \hat{v}):= (f,v)_{\Omega}+ \dual{Vg_1 + g_0, \btau\cdot \n}_{\Gamma} + \dual{\alpha g_0, v-\varphi}_{\Gamma}
 +\dual{( \frac{\text{id}}{2} + K')g_1, \varphi}_{\Gamma}.
\]

\begin{prop}\label{wellposed}
The LDG-FEM/DG-BEM method defined by \eqref{ldg-FemBem} provides a unique approximate solution $(\bsigma_h, (u_h,\psi_h))
\in \bSigma_h\times (V_h\times \Psi_h)$.
\end{prop}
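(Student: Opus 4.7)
The scheme \eqref{ldg-FemBem} is a square linear system in finite dimensions, so it suffices to establish uniqueness, namely that the homogeneous problem (with $f=0$, $g_0=0$, $g_1=0$) admits only the trivial solution. The natural opening move is to test the compact form \eqref{compactForm} with $(\btau,\hat{v})=(\bsigma_h,\hat{u}_h)$; the two $b$-contributions in \eqref{A} cancel, leaving the identity
\[
  a(\bsigma_h,\bsigma_h) + c(\hat{u}_h,\hat{u}_h) = 0.
\]

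The second step is to expose the positivity hidden in each of these two forms. By definition $a(\bsigma_h,\bsigma_h) = \|\bsigma_h\|_{0,\Omega}^2 + \dual{\bsigma_h\cdot\n, V(\bsigma_h\cdot\n)}_\Gamma$, whose second term is nonnegative by the coercivity \eqref{eq-coercive-1} of $V$. For $d(\psi_h,\psi_h)$, the symmetry of the $\cE_h$-duality pairing collapses the two cross-terms involving $\boldsymbol T$, yielding
\[
  d(\psi_h,\psi_h) = \dual{V\curl_h\psi_h,\curl_h\psi_h}_\Gamma + \dual{\nu\jump{\psi_h},\jump{\psi_h}}_{\cE_h},
\]
both of whose summands are nonnegative (the first by positivity of $V$, the second by \eqref{nu}), as is $\dual{\alpha\jump{\hat{u}_h},\jump{\hat{u}_h}}_{\cF_h}$ thanks to \eqref{coef}. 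Since the sum of these nonnegative quantities vanishes, each must vanish separately: $\bsigma_h\equiv 0$ in $\Omega$, $\jump{\hat{u}_h}=0$ on every face of $\cF_h$, $\jump{\psi_h}=0$ on every edge of $\cE_h$, and $\dual{V\curl_h\psi_h,\curl_h\psi_h}_\Gamma = 0$.

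To recover $\psi_h=0$, I would observe that $\jump{\psi_h}_e=0$ on every edge implies $\psi_h$ is continuous on $\Gamma$, so $\curl_h\psi_h = \curl_\Gamma\psi_h$, and the identity $W=\textrm{curl}_\Gamma V\curl_\Gamma$ gives $\dual{W\psi_h,\psi_h}_\Gamma = \dual{V\curl_h\psi_h,\curl_h\psi_h}_\Gamma = 0$. Combined with $\psi_h\in\Psi_h\subset L^2_0(\Gamma)$, the coercivity \eqref{eq-coercive-2} forces $\psi_h=0$. The boundary component of $\jump{\hat{u}_h}$ is $(u_h-\psi_h)\n$, so its vanishing on $\cF_h^{\partial}$ yields $u_h|_\Gamma=0$. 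Substituting $\bsigma_h=0$, $\psi_h=0$, $\jump{u_h}|_{\cF_h^0}=0$ and $u_h|_\Gamma=0$ into the first equation of \eqref{ldg-FemBem} collapses $b(\btau,\hat{u}_h)=0$ to $-(\nabla_h u_h,\btau)_\Omega = 0$ for all $\btau\in\bSigma_h$. Choosing the element-wise test $\btau|_K:=\nabla u_h|_K \in \cP_{m-1}(K)^3\subset\mathbf{RT}_m(K)$ produces $\nabla u_h|_K=0$, so $u_h$ is piecewise constant, and the continuity across interior faces together with $u_h|_\Gamma=0$ and the connectedness of $\Omega$ finally gives $u_h\equiv 0$.

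The step I expect to require the most care is the positivity bookkeeping for $d(\psi_h,\psi_h)$: recognising that the two $\boldsymbol{T}$-jump terms annihilate each other by symmetry of the pairing, and then bridging from $\dual{V\curl_h\psi_h,\curl_h\psi_h}_\Gamma=0$ to the coercivity \eqref{eq-coercive-2} through the identity $W=\textrm{curl}_\Gamma V\curl_\Gamma$ once continuity of $\psi_h$ across the edges of $\cG_h$ has been established. The rest is standard DG-coercivity and connectedness book-keeping.
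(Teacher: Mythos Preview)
Your proof is correct and follows essentially the same route as the paper: test diagonally to obtain $a(\bsigma_h,\bsigma_h)+c(\hat u_h,\hat u_h)=0$, exploit positivity to annihilate $\bsigma_h$, $\jump{\hat u_h}$, $\jump{\psi_h}$ and the $V\curl_h$ term, then use the first equation together with $\nabla_h(V_h)\subset\bSigma_h$ and connectedness to kill $u_h$. The only (cosmetic) difference is in the step $\psi_h=0$: the paper argues more directly that $\dual{V\curl_h\psi_h,\curl_h\psi_h}_\Gamma=0$ forces $\curl_h\psi_h=0$, hence $\psi_h$ is piecewise constant, and combined with $\jump{\psi_h}=0$ and zero mean one gets $\psi_h=0$ --- avoiding the detour through $W$ and \eqref{eq-coercive-2}.
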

\begin{proof}
 It suffices to prove that if $f=0$, $g_0=0$ and $g_1=0$, 
 then \eqref{ldg-FemBem} admits only the trivial solution. Taking $\btau = \bsigma_h$ 
 and $\hat{v}=\hat{u}_h$ in \eqref{compactForm} yields
 \[
  a(\bsigma_h, \bsigma_h) + c(\hat{u}_h,\hat{u}_h) = 0,
 \]
which proves that $\bsigma_h=\mathbf{0}$, $\jump{\hat{u}_h}=0$, $\curl_h \psi_h =0$  
and $\jump{\psi_h}_e=0$ for all $e\in \cE_h$. Consequently, $\psi_h$ is constant 
on $\Gamma$ and, as it has zero mean value, it must vanish identically. 
Now, $\psi_h=0$ and $\jump{\hat{u}_h}=0$ implies that $u_h=0$ on $\Gamma$.
 On the other hand, it follows from
\[
 b(\btau, \hat{u}_h) = -( \nabla_h u_h,  \btau)_{\Omega} = 0\quad \forall \btau \in \bSigma_h
\]
and the fact that $\nabla_h(V_h)\subset \bSigma_h$ that $\nabla_h u_h=\mathbf{0}$. We can now conclude that
$u_h=0$ since it is constant in each $T$, it has no jumps across the interior faces of $\cT_h$ 
($\jump{u_h}_F = 0$ for all $F\in \cF_h^0$) and 
it vanishes on $\Gamma$. 
\end{proof}

We end this section by proving that our LDG-FEM/DG-BEM scheme is consistent. 
\begin{prop}\label{consistency0}
 Let $u$ be the solution of \eqref{ModelProblem} in $\Omega$, $\bsigma:= \nabla u$ and $\psi:=u|_\Gamma - g_0$. Under the regularity 
 assumptions $u\in H^2(\Omega)$ and $W \psi\in L^2(\Gamma)$ we have that  
 \[
  A(\bsigma, (u, \psi); \btau, (v,\varphi)) = F(\btau, (v, \varphi))\quad \forall \btau \in \bSigma_h, \quad 
  \forall (v,\varphi)\in V_h\times \Psi_h.
 \]
\end{prop}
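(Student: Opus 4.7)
The plan is to substitute the exact triple $(\bsigma,u,\psi)$ into the bilinear form $A$ and verify, piece by piece, that the result equals $F$. The essential ingredients are the regularity consequences of $u\in H^2(\Omega)$, namely $\jump{u}_F = 0$ and $\jump{\bsigma}_F = 0$ for every $F\in\cF_h^0$ (since $\bsigma=\nabla u \in H^1(\Omega)^3$); the transmission conditions \eqref{trans}, which give $\jump{(u,\psi)}|_{\cF_h^\partial} = g_0\n$ and $\lambda = \bsigma\cdot\n - g_1$; and the Calder\'on identities \eqref{inteq1}--\eqref{inteq2}, from which I will use $(\frac{\text{id}}{2}-K)\psi = -V\lambda$ and $W\psi = -(\frac{\text{id}}{2}+K')\lambda$.

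Testing first with $\btau\in\bSigma_h$: in $a(\bsigma,\btau)+b(\btau,(u,\psi))$ the two volume integrals cancel because $\bsigma=\nabla_h u$, the interior-skeleton term disappears because $\jump{u}_F = 0$, and what remains, namely $\dual{\btau\cdot\n,\, V(\bsigma\cdot\n) + (\frac{\text{id}}{2}-K)\psi + g_0}_\Gamma$, reduces to $\dual{Vg_1 + g_0,\,\btau\cdot\n}_\Gamma$ by the first Calder\'on relation and $\bsigma\cdot\n - \lambda = g_1$. Testing next with $(v,\varphi)\in V_h\times\Psi_h$: an element-wise integration by parts of $(\nabla_h v,\bsigma)_\Omega$, combined with $-\textrm{div}\,\bsigma = f$ and the vanishing of interior normal jumps of $\bsigma$, yields $(f,v)_\Omega + \dual{\jump{v},\mean{\bsigma}}_{\cF_h^0} + \dual{v,\bsigma\cdot\n}_\Gamma$. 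The skeleton contribution cancels against the corresponding term of $b$; using $\jump{(v,\varphi)}|_\Gamma = (v-\varphi)\n$, the remaining boundary part of $-b(\bsigma,(v,\varphi))$ rearranges into $\dual{\bsigma\cdot\n,\,(\frac{\text{id}}{2}+K)\varphi}_\Gamma$, while the jump part of $c$ collapses to $\dual{\alpha g_0,\, v-\varphi}_\Gamma$. Consistency will therefore follow once I prove $d(\psi,\varphi) = \dual{W\psi,\varphi}_\Gamma$, because the second Calder\'on relation, the adjoint property $\dual{K'\mu,\varphi}_\Gamma=\dual{\mu,K\varphi}_\Gamma$, and $\lambda = \bsigma\cdot\n - g_1$ then cancel the leftover $\bsigma\cdot\n$ contribution and produce $\dual{(\frac{\text{id}}{2}+K')g_1,\varphi}_\Gamma$.

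The main obstacle is precisely this identity $d(\psi,\varphi) = \dual{W\psi,\varphi}_\Gamma$, and it is the one place where the assumption $W\psi\in L^2(\Gamma)$ enters. Combined with $W=\textrm{curl}_\Gamma V\curl_\Gamma$, this regularity places $\psi$ in a space where $\jump{\psi}_e = 0$ for every $e\in\cE_h$ and $\curl_h\psi = \curl_\Gamma\psi$, so $\boldsymbol T\psi = V\curl_\Gamma\psi$ on the skeleton and $d(\psi,\varphi)$ collapses to
\[
\dual{V\curl_\Gamma\psi,\,\curl_h\varphi}_\Gamma + \dual{V\curl_\Gamma\psi,\,\jump{\varphi}}_{\cE_h}.
\]
A triangle-by-triangle surface integration by parts on $\cG_h$, with the tangent convention $\tg_e = \n\times\n_{\partial F}$ used in the paper, assembles the edge contributions from adjacent triangles into $\jump{\varphi}_e$ and produces $\dual{\textrm{curl}_\Gamma \mathbf{p},\varphi}_\Gamma = \dual{\mathbf{p},\curl_h\varphi}_\Gamma - \dual{\mathbf{p},\jump{\varphi}}_{\cE_h}$ for every smooth tangential field $\mathbf{p}$. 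Applied with $\mathbf{p} = V\curl_\Gamma\psi$, this turns $d(\psi,\varphi)$ into $\dual{W\psi,\varphi}_\Gamma$; the only delicate point is verifying that the orientation convention for $\tg_e$ on two adjacent triangles reproduces the sign of the jump used in \eqref{d}.
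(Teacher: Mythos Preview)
Your proof is correct and follows essentially the same route as the paper: both arguments reduce consistency to the interior integration-by-parts identity $\sum_K(\nabla u,\nabla v)_K = (f,v)_\Omega + \dual{\nabla u,\jump{v}}_{\cF_h^0} + \dual{\partial_{\n}u,v}_\Gamma$, the Calder\'on relations \eqref{inteq1}--\eqref{inteq2}, and the element-wise surface integration by parts on $\cG_h$ that turns $\dual{V\curl_\Gamma\psi,\curl_h\varphi}_\Gamma + \dual{\boldsymbol T\psi,\jump{\varphi}}_{\cE_h}$ into $\dual{W\psi,\varphi}_\Gamma$. Regarding the sign you flag: with the paper's tangent convention $\tg_e=(\n\times\n_{\partial F})|_e$ the boundary terms from adjacent triangles assemble with a \emph{plus} sign, so your integration-by-parts identity should read $\dual{\textrm{curl}_\Gamma\mathbf{p},\varphi}_\Gamma = \dual{\mathbf{p},\curl_h\varphi}_\Gamma + \dual{\mathbf{p},\jump{\varphi}}_{\cE_h}$, which is exactly what is needed to match the collapsed form of $d(\psi,\varphi)$.
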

\begin{proof}
 Taking into account that $\lambda:= \bsigma\cdot\n - g_1 = \nder{u} - g_1$, it is straightforward to show that 
 \begin{multline*}
  A(\bsigma, (u, \psi); \btau, (v,\varphi)) = \dual{\btau\cdot \n, g_0 + Vg_1}_\Gamma + 
  \dual{\alpha g_0, v - \varphi}_\Gamma + \dual{(\frac{\text{id}}{2} + K') g_1, \varphi}_\Gamma \\[1ex]
  +\dual{\btau\cdot \n, V\lambda + (\frac{\text{id}}{2} - K)\psi}_\Gamma 
  + \dual{(\frac{\text{id}}{2} + K') \lambda, \varphi}_\Gamma+ \dual{V \curl_\Gamma \psi, \curl_h \varphi}_{\Gamma} +
   \dual{\boldsymbol T \psi, \jump{\varphi}_e}_{\cE_h}\\[1ex]
  + \sum_{K\in \cT_h} (\nabla u, \nabla v)_K  
  -\dual{\nabla u, \jump{v} }_{\cF^0_h}  - \dual{\nder{u}, v}_\Gamma.
 \end{multline*}
Taking into account the integration by parts formula
 \begin{multline*}
  \dual{V \curl_\Gamma \psi, \curl_h \varphi}_{\Gamma} + \dual{\boldsymbol T \psi, \jump{\varphi}_e}_{\cE_h}= \\[1ex]
  \sum_{T\in \cG_h} \dual{V \curl_\Gamma \psi, \curl_T \varphi}_{T} 
  +  \dual{ \tg_{\partial T}\cdot V \curl_\Gamma \psi, \varphi  }_{\partial T}=
  \dual{W \psi, \varphi},
 \end{multline*}
we deduce the result from \eqref{inteq1}, \eqref{inteq2} and from the fact that
\[
 \sum_{K\in \cT_h} (\nabla u, \nabla v)_K = \sum_{K\in \cT_h} (f, v)_K 
 +\dual{\nabla u, \jump{v} }_{\cF^0_h}  + \dual{\nder{u}, v}_\Gamma.
\]
\end{proof}

\section{Convergence analysis}\label{s4}
In this section, we develop the error analysis of the LDG-FEM/DG-BEM scheme \eqref{ldg-FemBem}. We first 
introduce a series of technical results that are used in the proof the C\'ea's error estimate provided by 
Theorem \ref{cea}). Then, we use well-known interpolation error estimates to 
obtain the main convergence result stated in Theorem \ref{main}. 

\subsection{Technical results}
The following discrete trace inequality is standard, \cite{DiPietroErn}.
\begin{lemma}
 For all $K\in \cT_h$, all integer $k\geq 0$, and all $v\in \cP_k(K)$, 
 \begin{equation}\label{discreteTrace}
  h_K \norm{v}^2_{0,\partial K} \lesssim \norm{v}^2_{0,K}.
 \end{equation}
\end{lemma}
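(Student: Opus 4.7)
The plan is to use a standard scaling argument to the reference tetrahedron, where the inequality reduces to norm equivalence on a finite-dimensional polynomial space.

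First, since $\cT_h$ is shape regular, for each $K \in \cT_h$ there is an affine bijection $F_K:\hat K \to K$, $F_K(\hat \x) = B_K \hat \x + b_K$, where $\hat K$ is a fixed reference tetrahedron and $B_K \in \mathbb R^{3\times 3}$ satisfies $\|B_K\| \lesssim h_K$, $\|B_K^{-1}\| \lesssim h_K^{-1}$, $|\det B_K| \simeq h_K^3$, with hidden constants depending only on the shape regularity of the mesh. Setting $\hat v := v \circ F_K$ for $v\in \cP_k(K)$ yields $\hat v \in \cP_k(\hat K)$, and by the change-of-variables formula
\begin{equation*}
\norm{v}_{0,K}^2 = |\det B_K|\, \norm{\hat v}_{0,\hat K}^2 \simeq h_K^3 \norm{\hat v}_{0,\hat K}^2,
\end{equation*}
while on each face $\hat F \subset \partial \hat K$ (mapped by $F_K$ to a face $F$ of $K$) the surface Jacobian is of order $h_K^2$, so that
\begin{equation*}
\norm{v}_{0,\partial K}^2 \simeq h_K^2 \norm{\hat v}_{0,\partial \hat K}^2.
\end{equation*}

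Next, on the fixed reference element $\hat K$ the space $\cP_k(\hat K)$ is finite dimensional, and the linear map $\hat v \mapsto \hat v|_{\partial \hat K}$ is well defined into $L^2(\partial \hat K)$. By equivalence of norms on finite-dimensional spaces there exists a constant $C_k > 0$ (depending on $k$ and $\hat K$ only) such that
\begin{equation*}
\norm{\hat v}_{0,\partial \hat K}^2 \le C_k\, \norm{\hat v}_{0,\hat K}^2 \qquad \forall \hat v \in \cP_k(\hat K).
\end{equation*}

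Combining the two scaling identities with this reference inequality gives
\begin{equation*}
h_K \norm{v}_{0,\partial K}^2 \simeq h_K^3 \norm{\hat v}_{0,\partial \hat K}^2 \le C_k\, h_K^3 \norm{\hat v}_{0,\hat K}^2 \simeq \norm{v}_{0,K}^2,
\end{equation*}
which is the desired bound. There is no substantive obstacle here; the only point to watch is that the hidden constants depend on $k$ and on the shape-regularity constant of $\cT_h$ but are independent of $h$, and that the argument applies uniformly to every face of $K$ by summing the face-wise contributions (the number of faces being fixed).
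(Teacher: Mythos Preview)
Your proof is correct; the scaling argument via the reference element and norm equivalence on the finite-dimensional space $\cP_k(\hat K)$ is the standard way to establish this discrete trace inequality. The paper itself does not prove this lemma but simply cites \cite{DiPietroErn} as a reference, so your argument supplies exactly the kind of proof that reference would contain.
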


\begin{prop}\label{card}
 For all $v\in H^1(\cT_h)$, 
 \[
  \norm{\jump{v}}^2_{0,\cF^0_h}\lesssim \sum_{K\in \cT_h} \norm{v}^2_{0,\partial K}.
 \]
\end{prop}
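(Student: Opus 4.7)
The proof is essentially a straightforward face-by-face bound followed by a change in the order of summation. The plan is as follows.

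First I would fix an interior face $F \in \cF_h^0$ shared by two elements $K, K'$, and use the definition $\jump{v}_F = v_K \n_K + v_{K'} \n_{K'}$. Since $|\n_K| = |\n_{K'}| = 1$, the pointwise inequality $|v_K \n_K + v_{K'}\n_{K'}|^2 \leq 2(|v_K|^2 + |v_{K'}|^2)$ integrated over $F$ yields
\[
 \norm{\jump{v}_F}_{0,F}^2 \,\leq\, 2\bigl(\norm{v_K}_{0,F}^2 + \norm{v_{K'}}_{0,F}^2\bigr).
\]

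Next I would sum this bound over $F \in \cF_h^0$ and reindex the right-hand side by elements. Each interior face $F$ contributes the term $\norm{v_K}_{0,F}^2$ exactly once when it is attributed to the element $K$ on one of its sides, so
\[
 \sum_{F\in\cF_h^0}\bigl(\norm{v_K}_{0,F}^2 + \norm{v_{K'}}_{0,F}^2\bigr)
 \,=\, \sum_{K\in\cT_h}\sum_{F\in\cF(K)\cap\cF_h^0} \norm{v_K}_{0,F}^2.
\]
Since the faces in $\cF(K)$ are pairwise disjoint (up to sets of measure zero) and contained in $\partial K$, the inner sum is bounded by $\norm{v}_{0,\partial K}^2$, which gives the claim.

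There is no real obstacle; the argument uses only the definition of the jump, the elementary inequality $(a+b)^2 \leq 2(a^2+b^2)$, and a change of the order of summation from faces to elements. No use of the local quasi-uniformity assumption \eqref{localUnif}, of the discrete trace inequality \eqref{discreteTrace}, or of any bound on the cardinality of $\cF(K)$ is needed here, since each face contributes to exactly two elements and each interior face of $K$ is counted once in $\norm{v}_{0,\partial K}^2$.
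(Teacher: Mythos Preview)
Your argument is correct and follows the same overall pattern as the paper's proof: bound each $\norm{\jump{v}_F}_{0,F}^2$ by the two traces, then reindex the sum by elements. The only difference lies in the final step. The paper writes
\[
  \norm{\jump{v}}^2_{0,\cF^0_h}\leq 2\sum_{K\in\cT_h}\sum_{F\in\cF(K)}\norm{v}^2_{0,F}
\]
and then appeals to local quasi-uniformity so that the cardinality of $\cF(K)$ is uniformly bounded, whereas you observe directly that the faces in $\cF(K)$ are pairwise disjoint subsets of $\partial K$, so $\sum_{F\in\cF(K)}\norm{v}^2_{0,F}\leq\norm{v}^2_{0,\partial K}$ without any cardinality argument. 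Your version is slightly sharper (it gives the explicit constant $2$) and, as you note, avoids the local quasi-uniformity assumption altogether; the paper's invocation of that hypothesis is not actually needed for this estimate.
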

\begin{proof}
 The proof relies on the local quasi-uniformity of $\cT_h$. Indeed,
 \[
  \norm{\jump{v}}^2_{0,\cF^0_h} = \sum_{F\in \cF^0_h} \norm{\jump{v}}^2_{0,F} \leq 2  
  \sum_{K\in \cT_h} \sum_{F\in \cF(K)} \norm{v}^2_{0,F},
\]
and the result follows from the fact that the cardinality of the set $\cF(K)$ is uniformly bounded.
\end{proof}
The  $H^{1/2}(\cG_h)$-ellipticity of the  bilinear form $\dual{V\curl_h \psi, \curl_h\varphi }_\Gamma$ in 
$\Psi_h$ is essential for the stability of our method. The main difficulty that we had to deal with 
in our analysis is that this bilinear form is not uniformly bounded on $\Psi_h$ with respect 
to this broken-norm.  
\begin{lemma}\label{lemaVcurl}
 There holds
\[
\dual{V\curl_h \varphi, \curl_h\varphi }_\Gamma \gtrsim |\varphi|^2_{1/2,\cG_h},\qquad \forall \varphi\in\Psi_h.
\]
\end{lemma}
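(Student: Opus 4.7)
The plan is to exploit the coercivity \eqref{eq-coercive-1} of $V$ to reduce the statement to an inverse-type estimate on $\curl_h\varphi$, and then to establish that estimate by a conforming/nonconforming splitting that relies crucially on the quasi-uniformity of $\cG_h$.

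Since $\curl_h\varphi\in L^2(\Gamma)^3\hookrightarrow H^{-1/2}(\Gamma)^3$, applying \eqref{eq-coercive-1} componentwise gives
\[
\dual{V\curl_h\varphi,\curl_h\varphi}_\Gamma\,\gtrsim\,\|\curl_h\varphi\|_{-1/2,\Gamma}^2,
\]
so it suffices to establish the discrete inverse-type bound
\[
\|\curl_h\varphi\|_{-1/2,\Gamma}^2\,\gtrsim\,|\varphi|_{1/2,\cG_h}^2,\qquad\varphi\in\Psi_h. \qquad(\star)
\]

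To prove $(\star)$, I would introduce a Scott--Zhang-type quasi-interpolant $\Pi_h$ onto the continuous piecewise polynomial subspace of $\Psi_h$ and split $\varphi=\varphi^c+\varphi^d$ with $\varphi^c:=\Pi_h\varphi$. For the conforming piece, the factorization $W=\textrm{curl}_\Gamma V\curl_\Gamma$ combined with \eqref{eq-coercive-2} and the fact that $\varphi^c$ is (essentially) mean-zero delivers
\[
\|\curl_\Gamma\varphi^c\|_{-1/2,\Gamma}^2\,\simeq\,\dual{W\varphi^c,\varphi^c}_\Gamma\,\gtrsim\,|\varphi^c|_{1/2,\Gamma}^2.
\]
For the nonconforming remainder one uses the standard approximation estimates $\|\varphi^d\|_{0,\cG_h}^2\lesssim h\,\|\jump{\varphi}\|_{0,\cE_h}^2$ and $|\varphi^d|_{1,\cG_h}^2\lesssim h^{-1}\,\|\jump{\varphi}\|_{0,\cE_h}^2$ (consequences of Scott--Zhang theory on quasi-uniform meshes), which by interpolation give $|\varphi^d|_{1/2,\cG_h}^2\lesssim\|\jump{\varphi}\|_{0,\cE_h}^2$. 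A triangle inequality in the broken seminorm together with a global inverse estimate on the quasi-uniform mesh then yields $(\star)$, with the residual jump contributions absorbed into $\dual{V\curl_h\varphi,\curl_h\varphi}_\Gamma$ via the smoothing of order one produced by $V$.

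The main obstacle is exactly the phenomenon the authors emphasize above the statement: the form $\dual{V\curl_h\cdot,\curl_h\cdot}_\Gamma$ is \emph{not} uniformly bounded with respect to the broken seminorm $|\cdot|_{1/2,\cG_h}$, so the usual norm-equivalence shortcuts on finite-dimensional polynomial spaces are unavailable. One must instead combine the non-local smoothing of $V$ with the \emph{global} quasi-uniformity of $\cG_h$ via an inverse estimate that sees the whole mesh at once; it is precisely this last ingredient that makes the quasi-uniformity hypothesis on the boundary mesh indispensable for the analysis.
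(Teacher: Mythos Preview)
Your first step---using the coercivity \eqref{eq-coercive-1} of $V$ to reduce the claim to the inverse-type bound $(\star)$---is exactly what the paper does. The divergence is in how $(\star)$ is established, and your route has a genuine gap.

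Your conforming/nonconforming splitting $\varphi=\varphi^c+\varphi^d$ inevitably produces a residual term of the form $\norm{\jump{\varphi}}_{0,\cE_h}$ on the right-hand side (both through $|\varphi^d|_{1/2,\cG_h}$ and through $\norm{\curl_h\varphi^d}_{-1/2,\Gamma}$ when you peel $\curl_\Gamma\varphi^c$ back to $\curl_h\varphi$). You then claim these jump contributions can be ``absorbed into $\dual{V\curl_h\varphi,\curl_h\varphi}_\Gamma$ via the smoothing of $V$''. This is false: take $\varphi\in\Psi_h$ piecewise constant with nontrivial jumps. Then $\curl_h\varphi\equiv 0$, so $\dual{V\curl_h\varphi,\curl_h\varphi}_\Gamma=0$ and $\norm{\curl_h\varphi}_{-1/2,\Gamma}=0$, while $\norm{\jump{\varphi}}_{0,\cE_h}$ is as large as you like. (The lemma itself survives this example because $|\varphi|_{1/2,\cG_h}=0$ too; it is your \emph{argument} that breaks, since it tries to control quantities by $\norm{\jump{\varphi}}_{0,\cE_h}$ and then bound the latter by $\norm{\curl_h\varphi}_{-1/2,\Gamma}$.) No amount of quasi-uniformity or smoothing by $V$ can repair this; the jump penalty in $d(\cdot,\cdot)$ is added precisely because the $V\curl_h$ part alone does not see tangential jumps.

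The paper avoids this entirely by arguing \emph{elementwise}. It invokes (from the cited reference) the local lifting bound
\[
|\varphi|_{1/2,T}^2 \lesssim \norm{\curl_T\varphi}_{-1/2,T}^2 \qquad \forall T\in\cG_h,
\]
which is a Ne\v{c}as-type inequality on each triangle (the rotated gradient $\curl_T:H^{1/2}(T)/\mathbb{R}\to H^{-1/2}(T)$ is bounded below, with a scale-invariant constant depending only on shape regularity), followed by the localization inequality $\sum_T\norm{\curl_T\varphi}_{-1/2,T}^2\le\norm{\curl_h\varphi}_{-1/2,\Gamma}^2$. No splitting, no jump terms, and---contrary to your final paragraph---no global quasi-uniformity of $\cG_h$ is used in this lemma; that hypothesis enters the analysis only later, in the bounds for $\boldsymbol T$ and in the discrete Poincar\'e--Friedrichs inequality \eqref{discP}.
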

\begin{proof}
The result is a consequence of \eqref{eq-coercive-1} and the fact that  
(cf. \cite{HeuerSayas})
\begin{equation}\label{eqnorm}
 |\varphi|^2_{1/2, \cG_h}\lesssim  \sum_{T\in \cG_h} \norm{\curl_T \varphi}^2_{-1/2, T}\leq \norm{\curl_h \varphi}^2_{-1/2, \Gamma}
 \quad \forall \varphi \in \Psi_h.
\end{equation}
\end{proof}

The following estimate is a Poincar\'e-Friedrichs inequality for piecewise polynomial functions.  
\begin{lemma}
There holds
 \begin{equation}\label{discP}
 \norm{ \varphi}^2_{0,\Gamma} \lesssim \left( |\log{h}|
 |\varphi|^2_{1/2, \cG_h} + \norm{\jump{\varphi}}^2_{0,\cE_h} 
 \right) \qquad \forall \varphi\in\Psi_h.
\end{equation}
\end{lemma}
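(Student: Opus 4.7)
The plan is to split $\varphi$ into a piecewise constant low-frequency part and a zero-mean oscillation, and to bound each piece separately. The zero-mean property built into $\Psi_h$ is exploited through a Poincar\'e-type argument, and the logarithmic factor arises from a known discrete Poincar\'e inequality for piecewise constants in two space dimensions.

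First I would introduce the local $L^2$-projection $\bar\varphi$ defined by $\bar\varphi|_T:=|T|^{-1}\int_T\varphi$. Since $\varphi\in L^2_0(\Gamma)$, also $\int_\Gamma\bar\varphi=0$, and the $L^2(T)$-orthogonality of $\varphi-\bar\varphi$ to constants on $T$ gives
\[
\norm{\varphi}_{0,\Gamma}^2\,=\,\norm{\varphi-\bar\varphi}_{0,\Gamma}^2+\norm{\bar\varphi}_{0,\Gamma}^2.
\]
A Deny--Lions scaling argument on the reference triangle together with the quasi-uniformity of $\cG_h$ yields $\norm{\varphi-\bar\varphi}_{0,T}^2\lesssim h_T|\varphi|_{1/2,T}^2$, so summing produces $\norm{\varphi-\bar\varphi}_{0,\Gamma}^2\lesssim h\,|\varphi|_{1/2,\cG_h}^2$, which is already considerably stronger than required.

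For the piecewise constant part $\bar\varphi$ I would invoke a discrete Poincar\'e--Friedrichs inequality for zero-mean piecewise constants on a quasi-uniform two-dimensional mesh, of the form
\[
\norm{\bar\varphi}_{0,\Gamma}^2\,\lesssim\,|\log h|\sum_{e\in\cE_h}h_e\,\bigl|\bar\varphi|_{T}-\bar\varphi|_{T'}\bigr|^2,
\]
such as the one developed in \cite{NorbertSalim}; this step is the main technical obstacle and the source of the $|\log h|$-factor, reflecting heuristically the logarithmic behaviour of the Green's function of the planar Laplacian. To transfer the jumps of $\bar\varphi$ to the native data, on each interior edge $e=T\cap T'$ I would write
\[
\bar\varphi|_T-\bar\varphi|_{T'}\,=\,|e|^{-1}\int_e\bigl((\varphi_T-\varphi_{T'})-(\varphi-\bar\varphi)|_T+(\varphi-\bar\varphi)|_{T'}\bigr),
\]
square, and apply the discrete trace inequality \eqref{discreteTrace} together with the Deny--Lions bound from the previous step. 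Since the cardinality of $\cE(T)$ is uniformly bounded, this produces
\[
\sum_{e\in\cE_h}h_e\,\bigl|\bar\varphi|_T-\bar\varphi|_{T'}\bigr|^2\,\lesssim\,\norm{\jump{\varphi}}_{0,\cE_h}^2+|\varphi|_{1/2,\cG_h}^2.
\]

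Combining the three ingredients delivers \eqref{discP}. The logarithmic factor enters only through the discrete Poincar\'e estimate for piecewise constants; everything else is routine local analysis exploiting the quasi-uniformity hypothesis on $\cG_h$.
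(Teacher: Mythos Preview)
Your argument follows a genuinely different route from the paper and contains a real gap at the central step.

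The paper does not decompose $\varphi$ into a piecewise constant part plus an oscillation. Instead it quotes a broken Poincar\'e--Friedrichs inequality from \cite[Theorem~8]{HeuerSayas0},
\[
\norm{\varphi}_{0,\Gamma}^2 \lesssim \varepsilon^{-1}|\varphi|_{1/2+\varepsilon,\cG_h}^2 + \sum_{e\in\cE_h} h_e^{-1-2\varepsilon}\Bigl|\int_e\jump{\varphi}_e\Bigr|^2 + \Bigl|\int_\Gamma\varphi\Bigr|^2
\qquad(0<\varepsilon<\tfrac12),
\]
reduces the $H^{1/2+\varepsilon}$-seminorm to the $H^{1/2}$-seminorm by the inverse inequality $|\varphi_T|_{1/2+\varepsilon,T}\lesssim h_T^{-\varepsilon}|\varphi_T|_{1/2,T}$, and then chooses $\varepsilon=\tfrac12|\log h|^{-1}$ so that $h^{-2\varepsilon}$ stays bounded while $\varepsilon^{-1}\sim|\log h|$. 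The logarithm is thus produced by an $\varepsilon$-balancing argument in which the broken $H^{1/2}$-seminorm is kept active throughout; it does not come from a Poincar\'e inequality for piecewise constants.

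The estimate you invoke in step~3,
\[
\norm{\bar\varphi}_{0,\Gamma}^2 \,\lesssim\, |\log h|\sum_{e\in\cE_h} h_e\,\bigl|\bar\varphi|_T-\bar\varphi|_{T'}\bigr|^2,
\]
for zero-mean piecewise constants on a two-dimensional quasi-uniform mesh, is in general \emph{false}, and it is not contained in \cite{NorbertSalim}. Take $\bar\varphi$ to be the elementwise mean of any fixed smooth zero-mean function on $\Gamma$ (for instance $x_1-\tfrac12$ on the boundary of the unit cube): then $\norm{\bar\varphi}_{0,\Gamma}^2$ is of order one, every jump $\bar\varphi|_T-\bar\varphi|_{T'}$ is $O(h)$, there are $O(h^{-2})$ edges of length $O(h)$, and the right-hand side is $O(|\log h|\,h)\to 0$. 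The best constant in such an inequality is $O(h^{-1})$, which is precisely the standard discrete Poincar\'e bound $\norm{\bar\varphi}_{0,\Gamma}^2\lesssim\sum_e|\bar\varphi|_T-\bar\varphi|_{T'}|^2$; no logarithmic improvement is possible. The structural reason your decomposition fails is that passing to $\bar\varphi$ annihilates the elementwise $H^{1/2}$-seminorm ($|\bar\varphi|_{1/2,\cG_h}=0$), and the remaining jump information is weaker than the target by a full power of $h$. The fractional seminorm has to stay in the estimate, which is exactly what the paper's $\varepsilon$-parametrized argument arranges.
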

\begin{proof}
 We know from \cite[Theorem 8]{HeuerSayas0} that 
 \[
  \norm{\varphi}^2_{0,\Gamma} \lesssim \varepsilon^{-1} \abs{\varphi}^2_{1/2 + \varepsilon, \cG_h} + 
  \sum_{e \in \cE_h} h_e^{-1-2\epsilon} \left|\int_e \jump{\varphi}_e \right|^2 + \left| \int_\Gamma \varphi\right|^2
 \]
for all $\varphi \in H^{1/2 + \epsilon}(\cG_h)$ and for all $\varepsilon \in (0, 1/2)$. The inverse inequality
\[
 \abs{\varphi_T}_{1/2 + \varepsilon, T} \lesssim h_T^{-\varepsilon}\, \abs{\varphi_T}_{ 1/2, T} \quad 
 \forall \varphi_T \in \cP_m(T),
\]
and the fact that $\cG_h$ is quasi-uniform yields 
\[
  \norm{\varphi}^2_{0,\Gamma} \lesssim \varepsilon^{-1} h^{-2\varepsilon}\abs{\varphi}^2_{1/2 , \cG_h} + 
  h^{-2\varepsilon}\sum_{e \in \cE_h} h_e^{-1} \left|\int_e \jump{\varphi}_e \right|^2 
 \]
for all $\varphi \in \Psi_h$ and for all $\varepsilon \in (0, 1/2)$. The result follows now by choosing 
$\varepsilon = \frac{1}{2} (\log 1/h)^{-1}$ and applying the Cauchy-Schwarz inequality. 
\end{proof}

Finally, the following bound for $\boldsymbol T$ can be found in \cite[Equation (4.27)]{NorbertSalim}.
\begin{lemma}\label{cotaT}
We have that,  
 \[
  \norm{\boldsymbol T \varphi}_{0,\cE_h} \lesssim\,  \left( h^{-1}\norm{\varphi}^2_{0,\Gamma} +  
  h |\varphi|^2_{1,\Gamma} \right)^{1/2},\qquad \text{for all $\varphi \in H^1(\Gamma)$}. 
 \]
\end{lemma}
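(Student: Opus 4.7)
The starting point is that for $\varphi\in H^1(\Gamma)$ the element-wise surface curl coincides with the global surface curl, so $\boldsymbol{T}\varphi=V\mathbf{curl}_\Gamma\varphi$ as a function on $\Gamma$ (no jumps across inter-element edges). Thus the problem reduces to estimating the $L^2$-norm of a single function, $w:=V\mathbf{curl}_\Gamma\varphi$, on the one-dimensional skeleton $\mathcal{E}_h$.

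The plan is to employ a standard face-to-edge trace inequality on each triangle of the quasi-uniform surface mesh $\mathcal{G}_h$: for every $T\in\mathcal{G}_h$, every edge $e\in\mathcal{E}(T)$, and every $w\in H^1(T)$ one has
\[
\norm{w}_{0,e}^2 \lesssim h_T^{-1}\norm{w}_{0,T}^2 + h_T|w|_{1,T}^2.
\]
Summing this bound over all edges, using the uniformly bounded cardinality of $\mathcal{E}(T)$ and the quasi-uniformity of $\mathcal{G}_h$ (so that $h_T\simeq h$), yields
\[
\norm{w}_{0,\mathcal{E}_h}^2 \lesssim h^{-1}\norm{w}_{0,\Gamma}^2 + h\,|w|_{1,\Gamma}^2.
\]

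Applying this with $w=V\mathbf{curl}_\Gamma\varphi$, the conclusion reduces to the two continuity estimates
\[
\norm{V\mathbf{curl}_\Gamma\varphi}_{0,\Gamma}\,\lesssim\,\norm{\varphi}_{0,\Gamma},
\qquad
|V\mathbf{curl}_\Gamma\varphi|_{1,\Gamma}\,\lesssim\,|\varphi|_{1,\Gamma}.
\]
The second is obtained directly from the mapping property $V\colon L^2(\Gamma)\to H^1(\Gamma)$ (the case $s=1$ of the result recalled just after \eqref{eq-coercive-2}) combined with $\norm{\mathbf{curl}_\Gamma\varphi}_{0,\Gamma}\lesssim|\varphi|_{1,\Gamma}$. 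The first is obtained from the case $s=0$, namely $V\colon H^{-1}(\Gamma)\to L^2(\Gamma)$, together with the duality estimate $\norm{\mathbf{curl}_\Gamma\varphi}_{-1,\Gamma}\lesssim\norm{\varphi}_{0,\Gamma}$, which follows since $\mathbf{curl}_\Gamma$ is a first-order tangential differential operator whose formal adjoint (essentially $\mathrm{curl}_\Gamma$) maps $H^1(\Gamma)$ boundedly into $L^2(\Gamma)$.

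The only delicate point is ensuring that the mapping $V\colon H^{-1}(\Gamma)\to L^2(\Gamma)$ is available and that $\mathbf{curl}_\Gamma$ can be interpreted as a bounded operator from $L^2(\Gamma)$ into $H^{-1}(\Gamma)$ on the closed Lipschitz surface $\Gamma$; both facts are standard but must be invoked carefully. Everything else is routine: the trace inequality is local, the summation uses only quasi-uniformity, and the two norms of $V\mathbf{curl}_\Gamma\varphi$ combine to give exactly $(h^{-1}\norm{\varphi}_{0,\Gamma}^2+h|\varphi|_{1,\Gamma}^2)^{1/2}$.
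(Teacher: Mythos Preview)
Your argument is correct and complete. The paper itself does not give a proof of this lemma but simply refers to \cite[Equation~(4.27)]{NorbertSalim}; you have supplied a self-contained derivation. The line of reasoning you use---a scaled trace inequality on each triangle of the quasi-uniform surface mesh, summed to produce $\norm{w}_{0,\cE_h}^2\lesssim h^{-1}\norm{w}_{0,\Gamma}^2+h|w|_{1,\Gamma}^2$, followed by the mapping properties $V\colon H^{-1}(\Gamma)\to L^2(\Gamma)$ and $V\colon L^2(\Gamma)\to H^1(\Gamma)$ combined with the duality bound $\norm{\curl_\Gamma\varphi}_{-1,\Gamma}\lesssim\norm{\varphi}_{0,\Gamma}$---is precisely the natural way to obtain this estimate and is, in substance, what the cited reference does as well. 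Your caveat about the endpoint mapping $V\colon H^{-1}(\Gamma)\to L^2(\Gamma)$ and the duality interpretation of $\curl_\Gamma$ on the closed polyhedral surface is well placed; these are indeed the only points that require care, and they are covered by the framework recalled in the paper (the boundedness of $V\colon H^{s-1}(\Gamma)\to H^s(\Gamma)$ for $0\le s\le1$ and the identification of $H^{-1}(\Gamma)$ as the dual of $H^1(\Gamma)$).
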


\subsection{Stability of the LDG-FEM/DG-BEM method}

For all $K\in \cT_h$, we introduce the $L^2(K)$-orthogonal projector $\Pi_K$ onto $\cP_m(K)$. Moreover, we consider 
on each $T\in \cG_h$ the usual triangular Lagrange finite element of order $m$ $(m\geq 1)$ and 
denote by  $\tilde\pi_T:\, \mathcal{C}^0(T)\to \cP_m(T)$ the corresponding Lagrange interpolation operator.
We will also use the Raviart-Thomas interpolation operator $\bPi_K$ in $\mathbf{RT}_m(K)$, 
see \cite{rt91}. The global 
operators $\Pi:\, L^2(\cT_h) \to V_h$, $\bPi:\, \mathbf{H}^1(\cT_h) \to \bSigma_h$  and $\tilde\pi:\, \mathcal{C}^0(\Gamma) 
\to \Psi_h\cap \mathcal{C}^0(\Gamma)$ 
 are given by 
\[
(\Pi v )|_K := \Pi_K (v_K), \quad  (\bPi\btau )|_K := \bPi_K (\btau_K)\,\, \forall K \in \cT_h
\quad \text{and}\quad (\tilde\pi \varphi)|_T := \tilde\pi_T (\varphi_T)\,\,  \forall T\in \cG_h
\]
respectively.

For all $\btau \in \mathbf{H}^1(\cT_h)$ and $\hat{v}:= (v,\varphi)\in H^1(\cT_h)\times H^{1}(\cG_h)$, we  introduce the semi-norms
\[
 \norm{(\btau, \hat{v})} := \left(\norm{\btau}^2_{0,\Omega} + \norm{\btau\cdot\n}^2_{-1/2,\Gamma} + 
 |\varphi|^2_{1/2,\cG_h} + \norm{\alpha^{1/2} \jump{\hat{v}}}^2_{0,\cF_h} + 
 \norm{\nu^{1/2}\jump{\varphi}}^2_{0, \cE_h}\right)^{1/2} ,
\]
\begin{multline*}
 \norm{(\btau, \hat{v})}_{\#} := \Big(\norm{\btau}^2_{0,\Omega} + \norm{\btau\cdot\n}^2_{-1/2,\Gamma}  
 + \norm{\alpha^{1/2} \jump{\hat{v}}}^2_{0,\cF_h} + \\[1ex]
  \norm{\curl _h\varphi}^2_{-1/2,\Gamma}+ \norm{\nu^{1/2}\jump{\varphi}}^2_{0, \cE_h}\Big)^{1/2}
\end{multline*}
and for all $\btau \in \mathbf{H}^1(\cT_h)$ and $\hat{v}:= (v,\varphi)\in H^1(\cT_h)\times H^{1}(\Gamma)$, 
we introduce 
\begin{multline*}
 \norm{(\btau, \hat{v})}_*:= \Big(\norm{(\btau, \hat{v})}^2+
 \sum_{K\in \cT_h}\norm{\alpha^{-1/2} \btau\cdot \n_K}^2_{0,\partial K}+ \norm{ \btau\cdot \n}^2_{0,\Gamma}+
\sum_{K\in \cT_h} \norm{\alpha^{1/2} v}^2_{0,\partial K} +\\[1ex]
 \norm{\varphi}^2_{1/2,\Gamma}+
h^{-1} \norm{\varphi}^2_{0,\Gamma}+h |\varphi|^2_{1,\Gamma}\Big)^{1/2}.
\end{multline*}
It is clear that 
\begin{equation}\label{rn1}
\norm{(\btau, \hat{v})} \leq \norm{(\btau, \hat{v})}_*\quad 
\forall (\btau, \hat{v}) \in \mathbf{H}^1(\cT_h)\times ( H^1(\cT_h)\times H^{1}(\Gamma) ).
\end{equation}
Moreover, taking into account \eqref{eqnorm}, 
we deduce that  
\begin{equation}\label{rn2}
 \norm{(\btau, \hat{v})} \lesssim \norm{(\btau, \hat{v})}_{\#} \quad 
 \forall (\btau, \hat{v}) \in \mathbf{H}^1(\cT_h)\times ( H^1(\cT_h)\times H^{1}(\cG_h)).
\end{equation}
In the following we abbreviate
\[
 \pi_{\bsigma} := \bsigma - \bPi\bsigma, \quad
 \pi_u := u - \Pi u,\quad   \tilde\pi_\psi := \psi - \tilde\pi\psi 
\quad\text{and}\quad 
\pi_{\hat{u}} := (u - \Pi u, \psi - \tilde\pi \psi).
\]

\begin{lemma}\label{boundA}
Let us assume that $\bsigma \in H^{1/2+\varepsilon}(\Omega)^3$ with $\varepsilon> 0$ and $\psi \in H^{1}(\Gamma)$.
 Then, there exists a constant $\bar C>0$ independent of $h$ such that 
 \[
  \abs{A(\pi_{\bsigma}, \pi_{\hat{u}}; \btau, \hat{v})}\leq \bar C |\log{h}|^{1/2}\,  
  \norm{(\pi_{\bsigma}, \pi_{\hat{u}})}_* \norm{(\btau, \hat{v})}_{\#}
  \quad \forall  (\btau, \hat{v})\in \bSigma_h \times (V_h\times \Psi_h).
 \]
\end{lemma}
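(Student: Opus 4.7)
The plan is to split $A(\pi_{\bsigma},\pi_{\hat u};\btau,\hat v) = a(\pi_{\bsigma},\btau) + b(\btau,\pi_{\hat u}) - b(\pi_{\bsigma},\hat v) + c(\pi_{\hat u},\hat v)$ and estimate each bilinear form separately, always pairing the interpolation-error factor (controlled by $\|\cdot\|_*$) against the discrete factor (controlled by $\|\cdot\|_\#$) via Cauchy--Schwarz. The main ingredients will be the continuity of the boundary operators ($V\colon H^{-1/2}\to H^{1/2}$ and on $L^2$, and $K$ on both $H^{1/2}$ and $L^2$), Lemma \ref{cotaT} for $\boldsymbol T$, the discrete trace inequality \eqref{discreteTrace}, local quasi-uniformity \eqref{localUnif}, and the DG magic formula $\sum_K\dual{w,\btau\cdot\n_K}_{\partial K} = \dual{\mean{w},\jump{\btau}}_{\cF_h^0} + \dual{\jump{w},\mean{\btau}}_{\cF_h^0} + \dual{w,\btau\cdot\n}_\Gamma$.

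For $a(\pi_{\bsigma},\btau)$ the volume pairing is immediate, and the single-layer term is bounded by $\|\pi_{\bsigma}\cdot\n\|_{-1/2,\Gamma}\|\btau\cdot\n\|_{-1/2,\Gamma}$ using continuity of $V$. For $b(\btau,\pi_{\hat u})$, the key observation is that $\textrm{div}\,\btau\in V_h$ and $\Pi$ is the $L^2$-projection onto $V_h$, so $(\pi_u,\textrm{div}\,\btau)_\Omega=0$; integration by parts then gives $(\nabla_h\pi_u,\btau)_\Omega = \sum_K\dual{\pi_u,\btau\cdot\n_K}_{\partial K}$, which the DG magic formula reshapes into a pairing controlled by the $\sum_K\|\alpha^{1/2}\pi_u\|_{0,\partial K}^2$ contribution to $\|\cdot\|_*$ together with the jumps of $\btau$ present in $\|\cdot\|_\#$. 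The boundary integral term $\dual{\btau\cdot\n,(\tfrac12\text{id}-K)\tilde\pi_\psi}_\Gamma$ is bounded by continuity of $K$ on $H^{1/2}$ paired with $\|\tilde\pi_\psi\|_{1/2,\Gamma}$ from the $*$-norm. For $-b(\pi_{\bsigma},\hat v)$, the volume term $(\nabla_h v,\pi_{\bsigma})_\Omega$ vanishes thanks to the Raviart--Thomas orthogonality $\int_K\pi_{\bsigma}\cdot q=0$ for $q\in\cP_{m-1}(K)^3$ applied to $q=\nabla v_K$; the interior jump term is handled by weighted Cauchy--Schwarz against the $\sum_K\|\alpha^{-1/2}\pi_{\bsigma}\cdot\n_K\|_{0,\partial K}^2$ contribution to $\|\cdot\|_*$, and the $\Gamma$-jump term is a direct product of quantities in the two norms. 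For $c(\pi_{\hat u},\hat v)$, the weighted jump product is immediate; in $d(\tilde\pi_\psi,\varphi)$ the continuity of the Lagrange interpolant gives $\jump{\tilde\pi_\psi}=0$, which eliminates two of the four summands, the $V\curl_\Gamma$ summand is bounded by $\|\curl_\Gamma\tilde\pi_\psi\|_{-1/2,\Gamma}\|\curl_h\varphi\|_{-1/2,\Gamma}$ via continuity of $V$ (recalling \eqref{eqnorm}), and the remaining edge term is controlled by Lemma \ref{cotaT} paired against $\|\nu^{1/2}\jump{\varphi}\|_{0,\cE_h}$.

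The main obstacle, and the source of the $|\log h|^{1/2}$ factor, will be the boundary term $\dual{\pi_{\bsigma}\cdot\n,(\tfrac12\text{id}-K)\varphi}_\Gamma$ inside $-b(\pi_{\bsigma},\hat v)$: since $\varphi\in\Psi_h$ is discontinuous across edges of $\cG_h$, the natural $H^{1/2}$--$H^{-1/2}$ duality with $\pi_{\bsigma}\cdot\n$ is not available. I would therefore pair in $L^2(\Gamma)$, use the $L^2$-boundedness of $K$, and invoke the Poincar\'e--Friedrichs inequality \eqref{discP} to bound $\|\varphi\|_{0,\Gamma}\lesssim|\log h|^{1/2}(|\varphi|_{1/2,\cG_h}^2+\|\jump{\varphi}\|_{0,\cE_h}^2)^{1/2}$; the right-hand side is controlled by the $\#$-norm via \eqref{eqnorm} and $\nu\simeq 1$, while $\|\pi_{\bsigma}\cdot\n\|_{0,\Gamma}$ sits in the $*$-norm. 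Collecting all pieces and applying a final Cauchy--Schwarz in the product of accumulated quantities yields the claimed estimate with the logarithmic loss.
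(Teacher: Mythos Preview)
Your proposal is correct and follows essentially the same route as the paper: the same four-term split, the same orthogonality tricks ($(\pi_u,\mathrm{div}\,\btau)_K=0$ from the $L^2$-projection and $(\pi_{\bsigma},\nabla v)_K=0$ from the Raviart--Thomas interpolant), the same use of the continuity of $\tilde\pi\psi$ to kill two summands of $d(\cdot,\cdot)$, and the same identification of $\dual{\pi_{\bsigma}\cdot\n,(\tfrac{\mathrm{id}}{2}-K)\varphi}_\Gamma$ as the source of the $|\log h|^{1/2}$ loss via \eqref{discP}. One small imprecision: the face terms of $\btau$ you need in the $T_4$ estimate are not literally ``jumps of $\btau$ present in $\|\cdot\|_\#$'' but are absorbed into $\|\btau\|_{0,\Omega}$ via the discrete trace inequality \eqref{discreteTrace} and \eqref{localUnif}, exactly as the paper does at the end of its $T_4$ bound.
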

\begin{proof}
First of all, the definition of $A(\cdot, \cdot)$ and the triangle inequality yield  
\[
\abs{A(\pi_{\bsigma}, \pi_{\hat{u}}; \btau, \hat{v})}\leq 
\abs{a(\pi_{\bsigma},\btau)} + \abs{c(\pi_{\hat{u}}, \hat{v})} + 
\abs{ b(\pi_{\bsigma}, \hat{v}) } + \abs{ b(\btau, \pi_{\hat{u}}) } =: T_1 + T_2 + T_3 + T_4
\]
for all $(\btau, \hat{v})\in \bSigma_h \times (V_h\times \Psi_h)$.
Using Cauchy-Schwarz's inequality it is straightforward to see that
\begin{multline}\label{T1}
 T_1 \lesssim \, \left( \norm{\pi_{\bsigma}}^2_{0,\Omega} + \norm{\pi_{\bsigma}\cdot \n}^2_{-1/2,\Gamma} \right)^{1/2}
 \left( \norm{\btau}^2_{0,\Omega} + \norm{\btau\cdot \n}^2_{-1/2,\Gamma} \right)^{1/2}\\[1ex]
 \lesssim \norm{(\pi_{\bsigma}, \pi_{\hat{u}})} \norm{(\btau, \hat{v})}.
\end{multline}
Applying Lemma \ref{cotaT} and the Cauchy-Schwarz inequality we deduce that  
\begin{multline*}
 T_2 \leq 
 |\dual{\alpha \jump{\pi_{\hat{u}}},\jump{\hat{v}}}_{\cF_h}| + |d(\tilde\pi_\psi, \varphi)| \leq 
 |\dual{\alpha \jump{ \pi_{\hat{u}}},\jump{\hat{v}}}_{\cF_h}| +\\[1ex]
   |\dual{V\curl_h \tilde\pi_\psi, \curl_h\varphi }_\Gamma| + |\dual{\boldsymbol T\tilde \pi_\psi, \jump{\varphi}}_{\cE_h}| 
 + |\dual{\jump{\tilde\pi_\psi}, \boldsymbol T \varphi}_{\cE_h}| +  |\dual{\nu\jump{\tilde \pi_\psi}, \jump{\varphi}}_{\cE_h}|
 \\[1ex]
 = |\dual{\alpha \jump{\pi_{\hat u}},\jump{\hat{v}}}_{\cF_h}| +|\dual{V\curl_\Gamma \tilde \pi_\psi, \curl_h\varphi }_\Gamma| 
 + |\dual{\boldsymbol T\tilde\pi_\psi, \jump{\varphi}}_{\cE_h}| 
 \\[1ex]
 \lesssim 
 \norm{\alpha^{1/2} \jump{ \pi_{\hat{u}}}}_{0,\cF_h}\norm{\alpha^{1/2} \jump{\hat{v}}}_{0,\cF_h}+
  \norm{\curl_\Gamma \tilde\pi_\psi}_{-1/2,\Gamma} \norm{\curl_h\varphi}_{-1/2,\Gamma} + \\[1ex] 
 \norm{\nu^{1/2}\boldsymbol T\tilde\pi_\psi}_{0,\cE_h} \norm{\nu^{1/2}\jump{\varphi}}_{0,\cE_h}.
\end{multline*}
Taking advantage of the fact that $\curl_\Gamma:\, H^{1/2}(\Gamma)\to H^{-1/2}(\Gamma)^3$ is bounded we conclude that 
\begin{equation}\label{T2}
  T_2   \lesssim \norm{(\pi_{\bsigma}, \pi_{\hat{u}})}_* \norm{(\btau, \hat{v})}_{\#}.
\end{equation}
By definition of the Raviart-Thomas 
interpolation operator, $(\pi_{\bsigma}, \nabla v)_K = 0 $ for all $v\in V_h$, which implies that  
\[
 T_3 = \Bigl| \dual{\pi_{\bsigma}\cdot \n, ( \frac{\text{id}}{2} - K) \varphi}_\Gamma
 +\dual{\jump{v}, \mean{\pi_{\bsigma}} - \jump{\pi_{\bsigma}}\bbeta}_{\cF_h^0}
 + \dual{\jump{\hat{v}}, \pi_{\bsigma}}_\Gamma\Bigr|.
\]
We apply the Cauchy-Schwarz inequality and hypothesis \eqref{coef} on $\alpha$
to deduce that   
\begin{multline*}
 T_3 \lesssim \left( 
 \norm{\alpha^{-1/2}(\mean{\pi_{\bsigma}} - \jump{\pi_{\bsigma}}\bbeta)\cdot \n}^2_{0,\cF^0_h}
 + \max\{1, h_\cF\}\norm{ \pi_{\bsigma}\cdot \n}^2_{0,\Gamma} 
 \right)^{1/2}\\[1ex]
 \left( \norm{(\frac{\text{id}}{2} - K)\varphi}^2_{0,\Gamma} + \norm{\alpha^{1/2}\jump{\hat{v}}}^2_{0,\cF_h} \right)^{1/2}.
\end{multline*}
Hypothesis \eqref{coef} on $\bbeta$ and Proposition \ref{card} yield 
\[
 \norm{\alpha^{-1/2}(\mean{\pi_{\bsigma}} - 
 \jump{\pi_{\bsigma}}\bbeta)\cdot \n}^2_{0,\cF^0_h}
 \lesssim \sum_{K\in \cT_h} \norm{\alpha^{-1/2} \pi_{\bsigma}\cdot \n_K}^2_{0,\partial K}.
\] 
Moreover, the boundedness of 
$\frac{\text{id}}{2} - K:\, L^{2}(\Gamma)\to L^{2}(\Gamma)$ 
and the fractional order discrete Poincar\'e inequality \eqref{discP}
imply the estimate 
\begin{align*}
 \norm{  (\frac{\text{id}}{2} - K)\varphi}^2_{0,\Gamma}
 &\lesssim
 |\log{h}| \left( |\varphi|^2_{1/2, \cG_h} + \norm{\jump{\varphi}}^2_{0,\cE_h} \right)
 \\
 &\lesssim
 |\log{h}|
 \left( 
 \norm{\curl_h \varphi}^2_{-1/2, \Gamma} + \norm{\nu^{1/2}\jump{\varphi}}^2_{0,\cE_h} \right).
\end{align*}
This yields the following estimate,
\begin{equation}\label{T3}
 T_3 \lesssim| \log{h}|^{1/2}
 \left( \norm{(\pi_{\bsigma}, \pi_{\hat{u}})}^2 +   \norm{ \pi_{\bsigma}\cdot \n}^2_{0,\Gamma} +
 \sum_{K\in \cT_h}\norm{\alpha^{-1/2} \pi_{\bsigma}\cdot \n_K}^2_{0,\partial K}   
 \right)^{1/2}
\norm{(\btau, \hat{v})}_{\#}.
\end{equation}

To bound the last term $T_4$, we begin by using
integration by parts and the characterization of the $L^2(K)$-orthogonal projection onto $\cP_m(K)$,  
\[( \pi_u, \textrm{div} \btau)_K = 0\quad  \forall 
\btau \in \bSigma_h,
\]
to deduce that 
\[
 T_4 =  \Big | \dual{\jump{\pi_u}, \bbeta \jump{\btau} + \mean{\btau}}_{\cF_h^0} 
 + \dual{\btau\cdot \n, ( \frac{\text{id}}{2} + K) \tilde\pi_\psi}_\Gamma \Big |.
\]
Now, from the Cauchy-Schwarz inequality, the boundedness of $K:\, H^{1/2}(\Gamma)\to H^{1/2}(\Gamma)$,
the boundedness of $\bbeta$ and Proposition \ref{card}, it follows that   
\begin{multline*}
 T_4 \lesssim\, \left(\norm{\btau\cdot \n }^2_{-1/2,\Gamma} + 
 \sum_{K\in \cT_h}\norm{\alpha^{-1/2} \btau\cdot \n_K}^2_{0,\partial K}\right)^{1/2} \\[1ex]
 \left( \norm{\tilde\pi_\psi}_{1/2,\Gamma}^2 +   
\sum_{K\in \cT_h} \norm{\alpha^{1/2} \pi_u}^2_{0,\partial K}  \right)^{1/2}.
\end{multline*}
Finally, by virtue of \eqref{localUnif}, \eqref{coef} and \eqref{discreteTrace}, 
\[
 \norm{\alpha^{-1/2} \btau\cdot \n_K}^2_{0,\partial K} = \sum_{F\in \cF(K)} (h_K\alpha_F)^{-1}h_K\norm{\btau\cdot \n_K}^2_{0,F} \lesssim
 \sum_{F\in \cF(K)} h_K \norm{\btau}^2_{0,F}, 
\]
which means that 
\begin{equation}\label{T4}
 T_4 \lesssim\, \norm{(\btau, \hat{v})} \left( \norm{\tilde\pi_\psi}_{1/2,\Gamma}^2 +   
\sum_{K\in \cT_h} \norm{\alpha^{1/2} \pi_u}^2_{0,\partial K}  \right)^{1/2}.
\end{equation}
The result follows now directly from \eqref{T1}, \eqref{T2}, \eqref{T3} and \eqref{T4}.
\end{proof}
Let us introduce the errors  
\[
 e_{\bsigma} := \bsigma - \bsigma_h, \quad e_u := u - u_h,\quad  e_\psi := \psi - \psi_h \quad \text{and} \quad
 e_{\hat{u}} := (u - u_h, \psi - \psi_h).
\]
We notice that, under the regularity hypothesis of Proposition \ref{consistency0}, we have the following Galerkin orthogonality
\begin{equation}\label{consistency}
 A(e_{\bsigma},e_{\hat{u}}; \btau, \hat{v} ) = 0 \quad\forall (\btau, \hat{v})\in \bSigma_h \times (V_h\times \Psi_h).
\end{equation}
\begin{lemma}
 There exists $\underbar C>0$ such that 
\begin{equation}\label{coer}
\sup_{(\btau, \hat{v})\in \bSigma_h\times (V_h \times \Psi_h)} 
\frac{A(\bsigma, \hat{u};\btau, \hat{v})}{ \norm{(\btau, \hat{v})}_{\#} } \geq \underbar{C}\, \norm{(\bsigma, \hat{u})}
\end{equation}
for all $(\bsigma,\hat{u})\in  \bSigma_h\times (V_h \times \Psi_h)$.
\end{lemma}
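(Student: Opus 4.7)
The idea is to test the bilinear form $A(\bsigma,\hat{u};\cdot,\cdot)$ against $(\btau,\hat{v})=(\bsigma,\hat{u})$ itself and exploit the skew-symmetric placement of the $b$-form in \eqref{A} together with the two available coercivity estimates for the single-layer operator $V$.

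With this choice of test function, the two $b$-contributions in \eqref{A} cancel exactly, so
\[
A(\bsigma,\hat{u};\bsigma,\hat{u}) \;=\; a(\bsigma,\bsigma) \,+\, c(\hat{u},\hat{u}).
\]
The form $a$ produces $\|\bsigma\|^2_{0,\Omega}$ from the volume term together with $\dual{\bsigma\cdot\n,\,V(\bsigma\cdot\n)}_\Gamma$, which by the coercivity estimate \eqref{eq-coercive-1} is bounded below by $C_0\|\bsigma\cdot\n\|^2_{-1/2,\Gamma}$. From $c$ we obtain directly $\|\alpha^{1/2}\jump{\hat{u}}\|^2_{0,\cF_h}$ through the facet penalty; in $d(\psi,\psi)$ the two cross terms involving $\boldsymbol T$ cancel by antisymmetry, leaving only
\[
d(\psi,\psi) \;=\; \dual{V\curl_h\psi,\curl_h\psi}_\Gamma + \dual{\nu\jump{\psi},\jump{\psi}}_{\cE_h}.
\]
A second application of \eqref{eq-coercive-1}, now to $\curl_h\psi\in L^2(\Gamma)\subset H^{-1/2}(\Gamma)$, yields $C_0\|\curl_h\psi\|^2_{-1/2,\Gamma}$, while the edge term equals $\|\nu^{1/2}\jump{\psi}\|^2_{0,\cE_h}$ by virtue of \eqref{nu}.

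Collecting the five pieces gives $A(\bsigma,\hat{u};\bsigma,\hat{u}) \gtrsim \norm{(\bsigma,\hat{u})}^2_{\#}$. Dividing by $\norm{(\bsigma,\hat{u})}_{\#}$ and invoking the embedding \eqref{rn2} between the two norms yields
\[
\sup_{(\btau,\hat{v})}\frac{A(\bsigma,\hat{u};\btau,\hat{v})}{\norm{(\btau,\hat{v})}_{\#}} \;\geq\; \frac{A(\bsigma,\hat{u};\bsigma,\hat{u})}{\norm{(\bsigma,\hat{u})}_{\#}} \;\gtrsim\; \norm{(\bsigma,\hat{u})}_{\#} \;\gtrsim\; \norm{(\bsigma,\hat{u})},
\]
which is \eqref{coer}.

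I do not expect any genuine obstacle: this is the standard coercivity trick, enabled by the carefully symmetric/skew-symmetric design of \eqref{A} and by the appearance of $V$ on both the flux and the trace sides of the coupled system. The only substantive design point worth highlighting is the decision to place $\|\curl_h\varphi\|^2_{-1/2,\Gamma}$ (rather than the weaker seminorm $|\varphi|^2_{1/2,\cG_h}$) into the denominator norm $\norm{\cdot}_{\#}$: this is exactly what coercivity of $V$ delivers for free, and it remains compatible with the weaker target norm $\norm{\cdot}$ thanks to the discrete embedding \eqref{eqnorm} already exploited in Lemma~\ref{lemaVcurl}.
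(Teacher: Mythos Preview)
Your proposal is correct and follows essentially the same argument as the paper: test with $(\btau,\hat v)=(\bsigma,\hat u)$, use the skew-symmetry of the $b$-terms and of the $\boldsymbol T$-cross terms in $d$, apply the coercivity \eqref{eq-coercive-1} of $V$ to both $\bsigma\cdot\n$ and $\curl_h\psi$, and finish with \eqref{rn2}. The only cosmetic remark is that the identity $\dual{\nu\jump{\psi},\jump{\psi}}_{\cE_h}=\|\nu^{1/2}\jump{\psi}\|^2_{0,\cE_h}$ is just the definition and does not require \eqref{nu}.
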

\begin{proof}
It follows straightforwardly  from \eqref{eq-coercive-1} and \eqref{rn2} 
that, for all $(\btau, \hat{v}) \in \bSigma_h\times (V_h \times \Psi_h)$,
 \begin{multline*}
  A(\btau, \hat{v};\btau, \hat{v})\\ = 
  \norm{\btau}^2_{0,\Omega} + \dual{\btau\cdot \n, V (\btau\cdot \n)}_{\Gamma} + 
 \norm{\alpha^{1/2} \jump{\hat{v}}}^2_{0,\cF_h} + 
 \dual{V\curl_h \varphi, \curl_h\varphi }_\Gamma +  \norm{\nu^{1/2}\jump{\varphi}}^2_{0, \cE_h}
 \\[1ex]
 \gtrsim \left(\norm{\btau}^2_{0,\Omega} + \norm{\btau\cdot\n}^2_{-1/2,\Gamma} + 
  \norm{\alpha^{1/2} \jump{\hat{v}}}^2_{0,\cF_h} + 
 \norm{\curl_h \varphi}^2_{-1/2,\Gamma} + 
 \norm{\nu^{1/2}\jump{\varphi}}^2_{0, \cE_h}\right)
 \\[1ex] =  \norm{(\btau, \hat{v})}_{\#}^2 \gtrsim \norm{(\btau, \hat{v})} \norm{(\btau, \hat{v})}_{\#},
 \end{multline*}
 which proves the result. 
\end{proof}

\begin{theorem}\label{cea}
Under the hypothesis of Proposition \ref{consistency0},  
\[
 \norm{(e_{\bsigma}, e_{\hat{u}})} \leq (1 + \frac{\bar C}{\underbar C})|\log{h}|^{1/2} 
  \norm{(\pi_{\bsigma}, \pi_{\hat{u}})}_*.
\]
\end{theorem}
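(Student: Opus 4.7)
The plan is to carry out a standard C\'ea argument, splitting each error into an interpolation part and a purely discrete part, and handling the discrete part via the coercivity estimate \eqref{coer} together with the Galerkin orthogonality \eqref{consistency} and the continuity estimate of Lemma \ref{boundA}.

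First I would introduce the discrete quantity
\[
 (\delta_{\bsigma},\delta_{\hat{u}}) := \bigl(\bPi\bsigma - \bsigma_h,\,(\Pi u - u_h,\,\tilde\pi\psi - \psi_h)\bigr)
 \in \bSigma_h \times (V_h \times \Psi_h),
\]
and write the decomposition $e_{\bsigma} = \pi_{\bsigma} + \delta_{\bsigma}$, $e_{\hat{u}} = \pi_{\hat{u}} + \delta_{\hat{u}}$. By the triangle inequality and \eqref{rn1},
\[
 \norm{(e_{\bsigma}, e_{\hat{u}})} \leq \norm{(\pi_{\bsigma}, \pi_{\hat{u}})} + \norm{(\delta_{\bsigma}, \delta_{\hat{u}})}
 \leq \norm{(\pi_{\bsigma}, \pi_{\hat{u}})}_* + \norm{(\delta_{\bsigma}, \delta_{\hat{u}})}.
\]
It thus remains to bound the discrete remainder $(\delta_{\bsigma}, \delta_{\hat{u}})$.

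Since $(\delta_{\bsigma}, \delta_{\hat{u}})$ belongs to the discrete space, the coercivity estimate \eqref{coer} applies and yields
\[
 \underbar C\, \norm{(\delta_{\bsigma}, \delta_{\hat{u}})}
 \leq \sup_{(\btau,\hat{v})} \frac{A(\delta_{\bsigma}, \delta_{\hat{u}};\btau,\hat{v})}{\norm{(\btau,\hat{v})}_{\#}},
\]
where the supremum is over $(\btau,\hat{v}) \in \bSigma_h \times (V_h\times \Psi_h)$. The bilinearity of $A$ gives $A(\delta_{\bsigma}, \delta_{\hat{u}};\btau,\hat{v}) = A(e_{\bsigma}, e_{\hat{u}};\btau,\hat{v}) - A(\pi_{\bsigma}, \pi_{\hat{u}};\btau,\hat{v})$, and the Galerkin orthogonality \eqref{consistency} (whose hypothesis, Proposition \ref{consistency0}, is assumed here) kills the first summand on the right. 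Hence
\[
 A(\delta_{\bsigma}, \delta_{\hat{u}};\btau,\hat{v}) = -A(\pi_{\bsigma}, \pi_{\hat{u}};\btau,\hat{v}).
\]

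At this point Lemma \ref{boundA} gives the key estimate
\[
 |A(\pi_{\bsigma}, \pi_{\hat{u}};\btau,\hat{v})| \leq \bar C\,|\log h|^{1/2} \norm{(\pi_{\bsigma}, \pi_{\hat{u}})}_*\, \norm{(\btau,\hat{v})}_{\#},
\]
which, inserted above, yields $\norm{(\delta_{\bsigma}, \delta_{\hat{u}})} \leq (\bar C/\underbar C)\,|\log h|^{1/2}\,\norm{(\pi_{\bsigma}, \pi_{\hat{u}})}_*$. Combining with the triangle inequality step and using that $|\log h|^{1/2}\geq 1$ for $h$ small enough to absorb the leading $\norm{(\pi_{\bsigma}, \pi_{\hat{u}})}_*$ into the same factor, one obtains the claimed bound $\norm{(e_{\bsigma}, e_{\hat{u}})} \leq (1 + \bar C/\underbar C)\,|\log h|^{1/2}\,\norm{(\pi_{\bsigma}, \pi_{\hat{u}})}_*$. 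The hard part has already been done in Lemma \ref{boundA}; the only point that requires attention here is verifying that $\delta_{\bsigma} \in \bSigma_h$ and $\delta_{\hat{u}} \in V_h\times \Psi_h$, which is immediate because $\bPi$, $\Pi$ and $\tilde\pi$ are defined to map into those discrete spaces and $\tilde\pi\psi$ has zero mean value on $\Gamma$ by the construction of $\Psi_h$.
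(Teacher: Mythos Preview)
Your argument is correct and follows exactly the same route as the paper: split the error into the interpolation part $(\pi_{\bsigma},\pi_{\hat u})$ and the discrete part, control the discrete part via the inf--sup estimate \eqref{coer} combined with Galerkin orthogonality \eqref{consistency}, and invoke Lemma~\ref{boundA} for the continuity bound. Your explicit remark that $|\log h|^{1/2}\geq 1$ is needed to absorb the interpolation term into the stated constant is a detail the paper leaves implicit.
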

\begin{proof}
The Galerkin orthogonality \eqref{consistency} and \eqref{coer} yield 
\begin{multline*}
 \underbar{C}\, \norm{(\bsigma_h - \bPi \bsigma, (u_h - \Pi u, \psi_h - \tilde\pi \psi ))} \leq \\[1ex]
  \sup_{(\btau, \hat{v})\in \bSigma_h\times (V_h \times \Psi_h)} 
  \frac{A( \bsigma_h - \bPi \bsigma, (u_h - \Pi u, \psi_h - \tilde\pi \psi ) ; \btau, \hat{v} )}
  {\norm{(\btau, \hat{v})}_{\#}}=\\[1ex]
  \sup_{(\btau, \hat{v})\in \bSigma_h\times (V_h \times \Psi_h)} 
  \frac{A( \pi_{\bsigma}, \pi_{\hat{u}} ; \btau, \hat{v} )}
  {\norm{(\btau, \hat{v})}_{\#}}
\end{multline*}
Applying Lemma \ref{boundA} we deduce that 
\begin{equation}\label{interm}
 \norm{(\bsigma_h - \bPi \bsigma, (u_h - \Pi u, \psi_h - \tilde\pi \psi ))} \leq \frac{\bar C}{\underbar C}
 |\log{h}|^{1/2} 
 \norm{(\pi_{\bsigma}, \pi_{\hat{u}})}_*
\end{equation}
and the result follows from triangle inequality
\[
 \norm{(e_{\bsigma}, e_{\hat{u}})} \leq \norm{(\pi_{\bsigma}, \pi_{\hat{u}})} + 
 \norm{(\bsigma_h - \bPi \bsigma, (u_h - \Pi u, \psi_h - \tilde\pi \psi ))}.
\]
\end{proof}

\subsection{Asymptotic error estimates}
In this section we need to handle functions that are piecewise smooth on the boundary 
$\Gamma$ of the polyhedron $\Omega$. Let $\set{\Gamma_1, \cdots, \Gamma_N}$ be the open polygons, 
contained in different hyperplanes of $\mathbb{R}^3$, such that $\Gamma= \cup_{j= 1}^N \overline{\Gamma}_j$. 
For any $t\geq 0$, we consider the broken Sobolev space $H^t_{\pw}(\Gamma):= \prod_j H^t(\Gamma_j)$
endowed with the graph norm
\[
 \norm{\varphi}^2_{t,\pw,\Gamma}: = \sum_{j= 1}^N \norm{\varphi}^2_{H^t(\Gamma_j)}. 
\] 
Let us recall some well-known approximation properties related with the (local and global) projection and interpolation 
operators.
\begin{lemma}\label{v}
For all $K\in \cT_h$, if  $w\in H^{r+1}(K)$ with $r\geq 0$, then
 \[
  h_K\norm{\nabla (w - \Pi_K w) }_{0,K}  + \norm{w - \Pi_K w}_{0,K} \lesssim h_K^{\min\{r,m\}+1} \norm{w}_{r+1,K}
 \]
 and
\[
 \norm{w - \Pi_K w}_{0,\partial K} \lesssim h_K^{\min\{r,m\}+1/2} \norm{w}_{r+1,K}.
\]
\end{lemma}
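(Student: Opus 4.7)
The proof of Lemma \ref{v} is a standard application of the Bramble--Hilkbert lemma, a scaling argument, and a trace inequality, so my plan is to outline how these pieces fit together rather than to compute every constant.

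The plan is to prove the $L^2$-estimate first by a reference-element argument. Let $\hat K$ be the reference tetrahedron and let $F_K:\hat K\to K$ be an affine, shape-regular diffeomorphism with $\|DF_K\|\simeq h_K$ and $|\det DF_K|\simeq h_K^3$. Denote by $\hat\Pi$ the $L^2(\hat K)$-orthogonal projection onto $\mathcal P_m(\hat K)$. Since $\hat\Pi$ is the $L^2(\hat K)$ best approximation, the Bramble--Hilbert lemma gives, for any $\hat w\in H^{s+1}(\hat K)$ with $0\le s\le m$,
\[
\|\hat w-\hat\Pi\hat w\|_{0,\hat K}\;\lesssim\;|\hat w|_{s+1,\hat K}.
\]
A routine change of variables, noting that $\Pi_K$ and $\hat\Pi$ commute with the pull-back through $F_K$, then yields $\|w-\Pi_K w\|_{0,K}\lesssim h_K^{s+1}|w|_{s+1,K}$ with $s=\min\{r,m\}$.

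Next I would handle the gradient. Here one cannot simply pull $\nabla$ through the projection, so I would proceed by comparison. Let $q\in\mathcal P_m(K)$ be a Bramble--Hilbert polynomial approximation of $w$ satisfying $|w-q|_{j,K}\lesssim h_K^{s+1-j}|w|_{s+1,K}$ for $j=0,1$. Triangle inequality and the standard inverse estimate on $\mathcal P_m(K)$ give
\[
\|\nabla(w-\Pi_K w)\|_{0,K}\le \|\nabla(w-q)\|_{0,K}+\|\nabla(q-\Pi_K w)\|_{0,K}
\lesssim|w-q|_{1,K}+h_K^{-1}\|q-\Pi_K w\|_{0,K}.
\]
The last term is bounded by $h_K^{-1}(\|q-w\|_{0,K}+\|w-\Pi_K w\|_{0,K})$, both of which are $\lesssim h_K^{s}|w|_{s+1,K}$ by Bramble--Hilbert and the first part of the proof. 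Multiplying by $h_K$ and adding the $L^2$ bound gives the first inequality.

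For the trace estimate, I would invoke the continuous (non-discrete) trace inequality on the shape-regular tetrahedron $K$, which reads
\[
\|v\|_{0,\partial K}^{2}\;\lesssim\;h_K^{-1}\|v\|_{0,K}^{2}+h_K\|\nabla v\|_{0,K}^{2}\qquad \forall v\in H^1(K),
\]
applied to $v=w-\Pi_K w$. Inserting the two bounds already obtained, both terms contribute an estimate of order $h_K^{2s+1}\|w\|_{r+1,K}^{2}$, and taking the square root yields the desired $h_K^{\min\{r,m\}+1/2}$ rate. No step here is a genuine obstacle; the only mildly delicate point is organising the gradient estimate through a comparison polynomial so as to avoid trying to differentiate the $L^2$-projector, and making sure shape regularity (rather than quasi-uniformity) is the only mesh hypothesis used, since the statement is local on $K$.
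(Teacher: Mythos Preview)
Your argument is correct and entirely standard. The paper, however, does not prove this lemma at all: it is introduced under the heading ``Let us recall some well-known approximation properties'' and simply stated without proof, the only lemma in that block that receives an argument being Lemma~\ref{btau} on the Raviart--Thomas interpolant. So there is nothing to compare against; your sketch (Bramble--Hilbert on the reference element for the $L^2$ bound, comparison with a polynomial approximant plus an inverse inequality for the gradient, and the scaled trace inequality $\|v\|_{0,\partial K}^2\lesssim h_K^{-1}\|v\|_{0,K}^2+h_K\|\nabla v\|_{0,K}^2$ for the boundary term) is exactly the textbook route one would take and uses only shape regularity, as required. The one cosmetic point: fix the typo ``Bramble--Hilkbert''.
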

\begin{lemma}\label{btau}
For all $K\in \cT_h$, if $\btau\in H^{r}(K)^3$ with $r> 1/2$, then
 \[
  \norm{\btau - \bPi_K \btau}_{0,K} \lesssim h_K^{\min\{r,m\}} \norm{\btau}_{r,K},
 \]
 and
 \[
  \norm{(\btau - \bPi_K \btau)\cdot \n_K}_{0,\partial K} \lesssim h_K^{\min\{r,m\}-1/2} \norm{\btau}_{r,K}.
 \]
\end{lemma}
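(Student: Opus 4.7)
The proof is the classical scaling/Piola-transform argument for Raviart--Thomas interpolation; the plan is to transport everything to a reference tetrahedron, apply Bramble--Hilbert there, and then scale back.

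\textbf{Step 1: Reduction to the reference element.} Let $\hat K$ be a fixed reference tetrahedron and let $F_K: \hat K \to K$ be the affine diffeomorphism. The plan is to use the contravariant Piola transform $\btau = \mathcal P_K \hat\btau := (\det DF_K)^{-1}\, DF_K\, \hat\btau\circ F_K^{-1}$, which bijectively maps $\mathbf{RT}_m(\hat K)$ onto $\mathbf{RT}_m(K)$ and commutes with the RT interpolation, i.e.\ $\mathcal P_K(\bPi_{\hat K}\hat\btau) = \bPi_K \btau$. I will use the standard Piola scaling bounds
\[
\norm{\btau}_{0,K} \simeq h_K^{1/2}\norm{\hat\btau}_{0,\hat K},
\qquad
\abs{\btau}_{r,K} \simeq h_K^{r-1/2}\abs{\hat\btau}_{r,\hat K}
\]
(for integer $r$; fractional case by interpolation) together with the analogous trace scalings $\norm{\btau\cdot\n_K}_{0,\partial K}\simeq \norm{\hat\btau\cdot\hat\n}_{0,\partial \hat K}$.

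\textbf{Step 2: Bramble--Hilbert on $\hat K$.} Since the face-moment degrees of freedom of $\mathbf{RT}_m(\hat K)$ are well defined on $H^r(\hat K)^3$ as soon as $r > 1/2$ (the usual trace theorem gives $\hat\btau\cdot\hat\n\in L^2(\partial\hat K)$), the operator $\bPi_{\hat K}: H^r(\hat K)^3 \to \mathbf{RT}_m(\hat K)$ is bounded. Moreover $\bPi_{\hat K}$ fixes every element of $\cP_{m-1}(\hat K)^3\subset \mathbf{RT}_m(\hat K)$. A standard Bramble--Hilbert/Deny--Lions argument (factoring out $\cP_{\min\{m,\lceil r\rceil\}-1}(\hat K)^3$ and using an interpolation inequality for the non-integer case) then yields
\[
\norm{\hat\btau - \bPi_{\hat K}\hat\btau}_{0,\hat K} \lesssim \abs{\hat\btau}_{\min\{r,m\},\hat K},
\qquad
\norm{(\hat\btau - \bPi_{\hat K}\hat\btau)\cdot\hat\n}_{0,\partial\hat K} \lesssim \abs{\hat\btau}_{\min\{r,m\},\hat K},
\]
the second bound following from a discrete trace inequality on the finite-dimensional target together with the usual trace theorem on the smooth argument.

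\textbf{Step 3: Scaling back to $K$.} Combining Step 1 with Step 2 gives
\[
\norm{\btau-\bPi_K\btau}_{0,K} \simeq h_K^{1/2}\norm{\hat\btau-\bPi_{\hat K}\hat\btau}_{0,\hat K}
\lesssim h_K^{1/2}\abs{\hat\btau}_{\min\{r,m\},\hat K} \lesssim h_K^{\min\{r,m\}}\norm{\btau}_{r,K},
\]
and analogously $\norm{(\btau-\bPi_K\btau)\cdot\n_K}_{0,\partial K}\lesssim h_K^{\min\{r,m\}-1/2}\norm{\btau}_{r,K}$, since the face trace scales as $h_K^{-1/2}$ relative to the $L^2(K)$ norm.

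\textbf{Main obstacle.} The delicate point is the low-regularity threshold $r>1/2$: the face moments defining $\bPi_{\hat K}$ are not continuous on $H^{1/2}(\hat K)^3$, so Step 2 must rely on the fact that for any $r>1/2$ the normal trace on $\partial\hat K$ lies in $L^2(\partial\hat K)$ and depends continuously on $\hat\btau$ in $H^r(\hat K)^3$. An alternative, and arguably cleaner, treatment of the boundary estimate is to invoke the commutation property $(\bPi_K\btau)\cdot\n_K|_F = P_F^{m-1}(\btau\cdot\n_K|_F)$, where $P_F^{m-1}$ is the $L^2(F)$-projection onto $\cP_{m-1}(F)$, which reduces the second estimate to the well-known approximation bound for the $L^2$ projection on a triangle. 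Either route gives the stated result.
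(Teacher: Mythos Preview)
Your approach is exactly the paper's: Piola-transform to the reference tetrahedron, apply Bramble--Hilbert there, and scale back via shape regularity; the paper skips the volume estimate as standard and only writes out the face one. One bookkeeping slip: the Piola scalings you state in Step~1 have the wrong exponents. In three dimensions the contravariant Piola map carries the factor $(\det B_K)^{-1}B_K$ of order $h_K^{-2}$, so the correct relations are
\[
\norm{\btau}_{0,K}\simeq h_K^{-1/2}\norm{\hat\btau}_{0,\hat K},
\qquad
\abs{\hat\btau}_{s,\hat K}\lesssim h_K^{\,s+1/2}\norm{\btau}_{s,K},
\qquad
\abs{F}\,\norm{\btau\cdot\n_F}_{0,F}^2=\abs{\hat F}\,\norm{\hat\btau\cdot\n_{\hat F}}_{0,\hat F}^2,
\]
the last being the exact identity the paper invokes. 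With your stated exponents the chain in Step~3 would yield $h_K^{1-\min\{r,m\}}$ rather than $h_K^{\min\{r,m\}}$; with the corrected ones it reads $h_K^{-1/2}\cdot h_K^{\min\{r,m\}+1/2}=h_K^{\min\{r,m\}}$ and the argument goes through. The alternative you sketch at the end, exploiting that $(\bPi_K\btau)\cdot\n_K|_F$ is the $L^2(F)$-orthogonal projection onto $\cP_{m-1}(F)$, is correct and is indeed the cleanest route to the face estimate.
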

\begin{proof}The first estimate is standard (cf. \cite{rt91}), we only prove the second one. 
 Let us denote by $\hat K$ the reference tetrahedron   and consider the Piola transformation
 \[
  \btau = \frac{1}{\textrm{det}(B_K)}B_K\hat{\btau}
 \]
 where $B_K$ is the matrix associated with the affine map from $\hat K$ onto $K$. We consider a face 
 $F$ of $T$ and we denote by $\hat F$ the corresponding face in $\hat T$ under the affine map. It is easy to show that 
 \[
  \abs{F} \norm{\btau \cdot \n_F}^2_{0,F} = \abs{\hat F} \norm{\hat \btau \cdot \n_{\hat F} }^2_{0, \hat F}
  \quad \forall \btau\in \mathbf{H}^1(K).
 \]
Let us denote by $\hat \bPi$ the Raviart-Thomas interpolation in $\mathbf{RT}_m(\hat K)$. 
It follows from the trace theorem in $\mathbf{H}^{\min\{r,m\}}(\hat K)$ and the Bramble-Hilbert theorem that 
\[
 \norm{(\hat\btau - \hat{\bPi} \hat{\btau})\cdot \n_{\hat F} }_{0,\hat{F}} \lesssim  |\hat \btau |_{\min\{r,m\},\hat K},
\]
where $|\cdot|_{\min\{r,m\},\hat K}$ stands for the semi-norm in $\mathbf{H}^{\min\{r,m\}}(\hat K)$.
Combining the last two estimates and recalling  that $\widehat{\bPi_K \btau} = \hat\bPi \hat{\btau}$
we obtain
\[
 \norm{\btau - \bPi_K \btau}_{0,F} \lesssim \abs{F}^{-1/2} |\hat \btau |_{\min\{r,m\},\hat K},
\]
and transforming back to $K$ this gives
\[
 \norm{\btau - \bPi_K \btau}_{0,F} \lesssim \abs{\hat F}^{1/2} \abs{F}^{-1/2} \abs{\textrm{det}(B_K)}^{1/2} \norm{B_K^{-1}}
 \norm{B_K}^{\min\{r,m\}} \norm{\btau }_{r, K}.
\]
The result follows now from the shape regularity of the partition $\cT_h$.
\end{proof}

\begin{lemma}\label{varphiHmedio}
Assume that $\varphi\in H_{\pw}^{r+1/2}(\Gamma)\cap H^1(\Gamma)$ with $r>1/2$, then
\[
 \norm{\varphi - \tilde\pi_T \varphi}_{0,T} \lesssim h_T^{\min\{r+1/2,m+1\}} \norm{\varphi}_{r+1/2,T} \quad \forall T\in \cG_h
\]
and
 \[
  \norm{\varphi - \tilde\pi \varphi}_{t,\Gamma} \lesssim h^{\min\{r+1/2,m+1\}-t} 
  \norm{\varphi}_{r+1/2,\pw,\Gamma},\quad t\in \{0,1/2, 1\}.
 \]
\end{lemma}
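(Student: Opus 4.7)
The plan is to use the standard Bramble--Hilbert plus scaling argument on the affine reference triangle, with the Sobolev embedding $H^{r+1/2}(\hat T)\hookrightarrow \mathcal{C}^0(\overline{\hat T})$ (valid since $r+1/2>1$ on a $2$-dimensional reference domain) guaranteeing that $\tilde\pi_T$ is well defined on $H^{r+1/2}(T)$. Concretely, map $T$ to a fixed reference triangle $\hat T$ via the affine transformation, observe that $\tilde\pi_T$ reproduces polynomials of degree $m$, invoke the Bramble--Hilbert lemma on $\hat T$, and scale back to $T$. This directly produces the first (local, $L^2$) estimate.

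For the global estimates, I would first sum the local $L^2$ bound over $T\in\cG_h$. Because $\cG_h$ is quasi-uniform one has $h_T\simeq h$ uniformly, and because each $T$ is contained in exactly one planar polygon $\Gamma_j$, the Slobodeckij seminorms satisfy $\sum_{T}\norm{\varphi}^2_{r+1/2,T}\leq \norm{\varphi}_{r+1/2,\pw,\Gamma}^2$ (the double integrals on subdomains only see local pairs). This yields the case $t=0$. For $t=1$, I would apply the same Bramble--Hilbert argument, but in the $H^1$-seminorm, to obtain a local bound $\abs{\varphi-\tilde\pi_T\varphi}_{1,T}\lesssim h_T^{\min\{r+1/2,m+1\}-1}\norm{\varphi}_{r+1/2,T}$, and then sum. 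Here I use that $\tilde\pi\varphi$ is continuous and piecewise polynomial, hence globally in $H^1(\Gamma)$, so that $\abs{\varphi-\tilde\pi\varphi}_{1,\Gamma}^2=\sum_{T}\abs{\varphi-\tilde\pi_T\varphi}_{1,T}^2$; combined with the $L^2$ estimate this gives the full $H^1$-norm bound.

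Finally, the intermediate case $t=1/2$ I would obtain by Sobolev space interpolation: since $H^{1/2}(\Gamma)=[L^2(\Gamma),H^1(\Gamma)]_{1/2}$, taking the geometric mean of the $t=0$ and $t=1$ bounds already proved yields the stated estimate, since the $h$-exponents interpolate to $\min\{r+1/2,m+1\}-1/2$.

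The main technical point to watch is the well-definedness of $\tilde\pi_T$: on a $2$-dimensional triangle one needs a continuous embedding into $\mathcal{C}^0$, which forces $r+1/2>1$, i.e., exactly the hypothesis $r>1/2$. A minor additional subtlety is that $\varphi$ need only be piecewise $H^{r+1/2}$ across the polyhedral edges of $\Gamma$; the local estimates only see one polygon $\Gamma_j$ at a time, while the global $H^1$ bound uses the additional assumption $\varphi\in H^1(\Gamma)$ together with the global continuity of $\tilde\pi\varphi$ to legitimately reconstruct $\abs{\varphi-\tilde\pi\varphi}_{1,\Gamma}$ from element-wise contributions.
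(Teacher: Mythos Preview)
Your argument is correct and is the standard route: Bramble--Hilbert on the reference element (using the embedding $H^{r+1/2}(\hat T)\hookrightarrow \mathcal C^0(\overline{\hat T})$ for $r>1/2$), scaling, summation over the quasi-uniform mesh $\cG_h$, and real interpolation for $t=1/2$. The paper itself does not give a proof of this lemma; it simply refers to \cite[Proposition 4.1.50]{sauterSchwab}, and what you wrote is precisely the argument underlying that reference.
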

\begin{proof}
 See \cite[Proposition 4.1.50]{sauterSchwab}
\end{proof}

\begin{lemma}\label{sigman}
Assume that $\lambda\in H^{-1/2}(\Gamma)\cap H_{\pw}^r(\Gamma)$ for some $r \geq 0$ and let 
\[
 \Lambda_h = \set{\mu\in L^2(\Gamma); \,\, \mu|_{T} \in \cP_{m-1}(T)
 \quad \forall T\in \cG_h}.
\]
Then,
\[
 \norm{\lambda - \mu_h}_{-t,\Gamma}\lesssim h^{\min\{ r,m \}+t} \norm{\lambda}_{r,\pw,\Gamma}\quad t\in \{0, 1/2\},
\]
where $\mu_h$ the best $L^2(\Gamma)$ approximation of $\lambda$ in $\Lambda_h$. 
\end{lemma}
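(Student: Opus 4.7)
The plan is to handle the two cases $t=0$ and $t=1/2$ separately, with the $t=1/2$ case following from the $t=0$ case by an Aubin--Nitsche--type duality argument.

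For $t=0$, the key observation is that the $L^2(\Gamma)$-best approximation $\mu_h$ of $\lambda$ in $\Lambda_h$ coincides elementwise with the local $L^2(T)$-projection onto $\cP_{m-1}(T)$. Since $\cG_h$ is a conforming triangulation of $\Gamma$ adapted to the polyhedral faces $\Gamma_1,\dots,\Gamma_N$, each $T\in\cG_h$ lies in exactly one $\Gamma_j$, so $\lambda|_T\in H^r(T)$. Using a Piola/affine map to the reference triangle together with the Bramble--Hilbert lemma (noting that $\cP_{m-1}(T)$ preserves polynomials of degree $\min\{r,m\}-1\geq -1$ in the sense that the projection error vanishes on the appropriate polynomial space), we obtain the local estimate
\[
 \norm{\lambda - \mu_h}_{0,T} \lesssim h_T^{\min\{r,m\}}\, \norm{\lambda}_{r,T}.
\]
Squaring, summing over $T\in\cG_h$, and using quasi-uniformity of $\cG_h$ (so that $h_T\simeq h$) yields $\norm{\lambda - \mu_h}_{0,\Gamma}\lesssim h^{\min\{r,m\}}\norm{\lambda}_{r,\pw,\Gamma}$.

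For $t=1/2$, the plan is a standard duality argument based on the Galerkin orthogonality satisfied by the $L^2$-projection,
\[
 \dual{\lambda - \mu_h, \mu}_\Gamma = 0 \qquad \forall\mu\in\Lambda_h.
\]
Given $\phi\in H^{1/2}(\Gamma)$, let $\phi_h\in\Lambda_h$ be its $L^2(\Gamma)$-projection. Subtracting $\dual{\lambda-\mu_h,\phi_h}_\Gamma=0$ and applying Cauchy--Schwarz gives
\[
 \dual{\lambda - \mu_h, \phi}_\Gamma = \dual{\lambda - \mu_h, \phi - \phi_h}_\Gamma \leq \norm{\lambda - \mu_h}_{0,\Gamma}\,\norm{\phi - \phi_h}_{0,\Gamma}.
\]
The first factor is bounded by the $t=0$ estimate just proven. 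For the second factor, since $\Lambda_h$ contains at least piecewise constants on a quasi-uniform mesh, the well-known approximation property of the $L^2$-projection on $H^{1/2}(\Gamma)$ yields $\norm{\phi - \phi_h}_{0,\Gamma}\lesssim h^{1/2}\norm{\phi}_{1/2,\Gamma}$. Taking the supremum over $\phi\in H^{1/2}(\Gamma)$ in the definition of the dual norm gives the desired bound.

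The only subtlety I expect is bookkeeping for small values of $r$ (e.g.\ $r=0$), where the Bramble--Hilbert step must be invoked with care, and the implicit assumption that $\cG_h$ respects the polyhedral decomposition $\Gamma=\cup\overline{\Gamma}_j$ so that $\lambda|_T$ really does have the claimed regularity elementwise. Everything else is standard: affine scaling, quasi-uniformity, and the duality trick.
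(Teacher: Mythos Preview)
Your argument is correct and is precisely the standard proof of this approximation result; the paper itself does not give a proof but simply refers to \cite[Theorem~4.3.20]{sauterSchwab}, whose proof proceeds exactly along the lines you outline (elementwise $L^2$-projection estimates via Bramble--Hilbert for $t=0$, followed by the Aubin--Nitsche duality trick for $t=1/2$). One minor remark: quasi-uniformity of $\cG_h$ is not actually needed here, since only $h_T\le h$ is used; your invocation of it is harmless but unnecessary.
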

\begin{proof}
 See \cite[Theorem 4.3.20]{sauterSchwab}.
\end{proof}

\begin{theorem}\label{main}
Assume that  that the solution of \eqref{ModelProblem} satisfies 
$u\in H^{s+2}(\Omega)$, $\psi\in H_{\pw}^{s+3/2}(\Gamma)\linebreak\cap H^1(\Gamma)$ and 
$\bsigma \cdot \n \in H_{\pw}^{s+1}(\Gamma)$ for some 
$s\geq 0$. Then, 
\begin{multline*} 
 \norm{(\bsigma - \bsigma_h, (u - u_h, \psi - \psi_h))} \lesssim |\log{h}|^{1/2}\, h^{\min\{s+1, m\}} \\[1ex]
 (\norm{u}_{s+2,\Omega} + \norm{\psi}_{s+3/2,\pw,\Gamma} + \norm{\bsigma \cdot \n}_{s+1,\pw,\Gamma} ).
\end{multline*}
\end{theorem}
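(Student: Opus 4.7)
The plan is to combine Theorem \ref{cea} with the standard approximation properties of Lemmas \ref{v}, \ref{btau} and \ref{varphiHmedio}. Since Theorem \ref{cea} already bounds $\norm{(e_\bsigma, e_{\hat u})}$ by $|\log h|^{1/2}\,\norm{(\pi_\bsigma, \pi_{\hat u})}_*$, the task reduces to showing
\[
\norm{(\pi_\bsigma, \pi_{\hat u})}_*^2 \lesssim h^{2\min\{s+1,m\}}\bigl(\norm{u}^2_{s+2,\Omega} + \norm{\psi}^2_{s+3/2,\pw,\Gamma} + \norm{\bsigma\cdot\n}^2_{s+1,\pw,\Gamma}\bigr).
\]
I would unpack the definition of $\norm{\cdot}_*$ and bound each of its contributions separately.

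For the flux pieces, Lemma \ref{btau} applied with $r=s+1$ to $\bsigma = \nabla u \in H^{s+1}(\Omega)^3$ yields both $\norm{\pi_\bsigma}^2_{0,\Omega} \lesssim h^{2\min\{s+1,m\}}\norm{\bsigma}^2_{s+1,\Omega}$ and the face bound $\norm{\pi_\bsigma\cdot\n_K}^2_{0,\partial K} \lesssim h_K^{2\min\{s+1,m\}-1}\norm{\bsigma}^2_{s+1,K}$. The weight $\alpha^{-1/2}\simeq h_\cF^{1/2}$ absorbs the missing half-power in $\sum_K\norm{\alpha^{-1/2}\pi_\bsigma\cdot\n_K}^2_{0,\partial K}$, so that term meets the target rate. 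The critical contribution is $\norm{\pi_\bsigma\cdot\n}^2_{0,\Gamma}$: the naive Raviart--Thomas face bound falls short by exactly half an order. I would close this gap by using that $(\bPi\bsigma)\cdot\n|_F$ coincides with the $L^2(F)$-projection of $\bsigma\cdot\n$ onto $\cP_{m-1}(F)$ on any boundary face $F$, so a face-wise Bramble--Hilbert argument together with the hypothesis $\bsigma\cdot\n\in H^{s+1}_\pw(\Gamma)$ gives the sharp bound $h^{2\min\{s+1,m\}}\norm{\bsigma\cdot\n}^2_{s+1,\pw,\Gamma}$. The $H^{-1/2}(\Gamma)$ piece of $\norm{(\pi_\bsigma,\pi_{\hat u})}_*$ then follows automatically via $L^2(\Gamma)\hookrightarrow H^{-1/2}(\Gamma)$.

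For the potential and trace pieces, Lemma \ref{v} combined with $\alpha^{1/2}\simeq h_\cF^{-1/2}$ converts $\norm{\pi_u}^2_{0,\partial K}\lesssim h_K^{2\min\{s+1,m\}+1}\norm{u}^2_{s+2,K}$ into the desired rate for $\sum_K\norm{\alpha^{1/2}\pi_u}^2_{0,\partial K}$ and, after Proposition \ref{card}, also for the interior-face portion of $\norm{\alpha^{1/2}\jump{\pi_{\hat u}}}^2_{0,\cF_h^0}$. On boundary faces $\jump{\pi_{\hat u}}_F$ additionally involves $\tilde\pi_\psi$, which I would control by Lemma \ref{varphiHmedio} on the boundary triangles meeting $F$, using \eqref{localUnif} to match $h_F$ with $h_T$. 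Lemma \ref{varphiHmedio} with $r=s+1$ and the three values $t\in\{0,1/2,1\}$ supplies the required $h^{\min\{s+1,m\}}$ rate for $|\tilde\pi_\psi|_{1/2,\cG_h}$, $\norm{\tilde\pi_\psi}_{1/2,\Gamma}$, $h^{-1/2}\norm{\tilde\pi_\psi}_{0,\Gamma}$ and $h^{1/2}|\tilde\pi_\psi|_{1,\Gamma}$. The contribution $\norm{\nu^{1/2}\jump{\tilde\pi_\psi}}_{0,\cE_h}$ vanishes identically, because both $\psi\in H^1(\Gamma)$ and its Lagrange interpolant $\tilde\pi\psi$ have single-valued traces across every $e\in\cE_h$. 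The main technical point to watch is the sharp boundary-face flux estimate described above; once it is in hand, collecting the individual bounds inside $\norm{(\pi_\bsigma,\pi_{\hat u})}_*^2$ and invoking Theorem \ref{cea} completes the proof.
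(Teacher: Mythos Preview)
Your proposal is correct and follows essentially the same route as the paper's proof: both reduce via Theorem~\ref{cea} to bounding $\norm{(\pi_\bsigma,\pi_{\hat u})}_*$ term by term using Lemmas~\ref{v}, \ref{btau}, \ref{varphiHmedio}, and both identify $\norm{\pi_\bsigma\cdot\n}_{0,\Gamma}$ as the delicate piece and recover the full order by exploiting that $(\bPi\bsigma)\cdot\n$ is the $L^2$-projection of $\bsigma\cdot\n$ onto piecewise polynomials on the boundary faces (the paper packages this as Lemma~\ref{sigman}, which is exactly your face-wise Bramble--Hilbert argument). The only cosmetic difference is that the paper records a slightly sharper $H^{-1/2}(\Gamma)$ bound via Lemma~\ref{sigman} rather than your $L^2\hookrightarrow H^{-1/2}$ embedding, but this does not affect the final rate.
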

\begin{proof}
Let us first notice that, thanks to \eqref{localUnif}, \eqref{coef} and Proposition \ref{card}, 
\[
 \norm{\alpha^{1/2} \jump{\hat{v}}}^2_{0,\cF_h} \lesssim \left( \sum_{K\in \cT_h} h_K^{-1}\norm{v}^2_{0,\partial K}
 + \sum_{T\in \cG_h} h_T^{-1} \norm{\varphi}^2_{0,T}\right)\quad \forall (v,\varphi)\in H^1(\cT_H)\times H^{1/2}(\Gamma).
\]
It follows that 
\begin{multline}\label{interp}
 \norm{(\pi_{\bsigma}, \pi_{\hat{u}})}_* \lesssim \Big( 
 \norm{\pi_{\bsigma}}^2_{0,\Omega} +  \sum_{K\in \cT_h}h_K\norm{ \pi_{\bsigma}\cdot \n_K}^2_{0,\partial K}
 + \norm{\pi_{\bsigma}\cdot\n}^2_{-1/2,\Gamma}+
 \norm{\pi_{\bsigma}\cdot\n}^2_{0,\Gamma}+
 \\[1ex]
 \sum_{K\in \cT_h} h_K^{-1}\norm{ \pi_u}^2_{0,\partial K}
 + h^{-1} \norm{\tilde\pi_\psi}^2_{0,\Gamma} + 
 \norm{\tilde\pi_\psi}^2_{1/2,\Gamma} + h \abs{\tilde\pi_\psi}^2_{1,\Gamma}
 \Big)^{1/2}.
\end{multline}
Using Lemma \ref{btau} we obtain
\begin{equation}\label{est1}
\norm{\pi_{\bsigma}}^2_{0,\Omega} +  \sum_{K\in \cT_h}h_K\norm{ \pi_{\bsigma}\cdot \n_K}^2_{0,\partial K} \leq 
C_2 h^{2\min\{s+1,m\}} \norm{u}^2_{s+2,\Omega}.
\end{equation}
On the other hand,  we recall that (by definition of the Raviart-Thomas 
interpolation operator) $(\bPi \bsigma)|_\Gamma \cdot \n$ is the best $L^2(\Gamma)$ approximation of 
$\bsigma \cdot \n$ in $\Lambda_h$. 
Consequently, by virtue of 
the regularity assumption  
$\bsigma\cdot \n\in H^{1+s}_{\pw}(\Gamma)$ and 
Lemma \ref{sigman},
\begin{equation}\label{est2}
 \norm{\pi_{\bsigma}\cdot \n}_{-1/2,\Gamma}\lesssim\, h^{\min\{ s+3/2,m +1/2 \}} \norm{\bsigma\cdot \n}_{1+s,\pw,\Gamma}
\end{equation}
and
\begin{equation}\label{est2b}
 \norm{\pi_{\bsigma}\cdot \n}_{0,\Gamma}
 \lesssim\, h^{\min\{ s+1, m \}} \norm{\bsigma\cdot \n}_{1+s,\pw,\Gamma}
\end{equation}
Applying now the estimates given in Lemma \ref{varphiHmedio} we deduce that
\begin{equation}\label{est3}
\sum_{T\in \cG_h} h_T^{-1} \norm{\tilde\pi_\psi}^2_{0,T} + 
 \norm{\tilde\pi_\psi}^2_{1/2,\Gamma}
 + h^{-1}\norm{\tilde\pi_\psi}^2_{0,\Gamma} + h\norm{\tilde\pi_\psi}^2_{1,\Gamma}
 \lesssim\,  h^{2\min\{s+1,m+1/2\}} \norm{\psi}^2_{s+3/2,\pw,\Gamma}.
 \end{equation}
 Finally, Lemma \ref{v} proves that 
 \begin{equation}\label{est4}
 \sum_{K\in \cT_h} h_K^{-1}\norm{ \pi_u}^2_{0,\partial K} \lesssim\, h^{2\min\{s+1,m\}} \norm{u}^2_{s+2,\Omega}.
 \end{equation}
  Plugging \eqref{est1}, \eqref{est2b}, \eqref{est3} and \eqref{est4} in \eqref{interp} and using that 
  $\min\{s+1, m+ 1/2 \}  \geq \min\{s+1, m\}$ yield
 \begin{equation}\label{globest}
  \norm{(\pi_{\bsigma}, \pi_{\hat{u}})}_* \lesssim\, h^{\min\{s+1, m\}} \left(\norm{u}^2_{s+2,\Omega} + 
  \norm{\psi}^2_{s+3/2,\pw,\Gamma} + \norm{\bsigma\cdot \n}^2_{1+s,\pw,\Gamma}\right)^{1/2}.
 \end{equation}
It follows now from Lemma \ref{cea} that
 \[
  \norm{(e_{\bsigma}, e_{\hat{u}})} \lesssim |\log h|^{1/2}\, h^{\min\{s+1, m\}} \left(\norm{u}^2_{s+2,\Omega} + 
  \norm{\psi}^2_{s+3/2,\pw,\Gamma} + \norm{\bsigma\cdot \n}^2_{1+s,\pw,\Gamma}\right)^{1/2}.
 \]
\end{proof}

\begin{remark}
 We point out that if $g_0 = g_1 = 0$ and the solution is harmonic in a neighborhood of $\Gamma$ 
 then, the boundary regularity assumption $\psi\in H_{\pw}^{s+3/2}(\Gamma)\cap H^1(\Gamma)$ and 
 $\bsigma \cdot \n \in H_{\pw}^{s+1}(\Gamma)$ will hold true for any $s\geq 0$. This condition 
 can always be fulfilled be choosing $\Omega$ big enough to contain the jumps of the solution. 
\end{remark}

\section{Conforming approximation on the boundary}\label{s5}
With little more effort we can provide the convergence analysis for a Galerkin scheme based on a conforming 
BEM-approximation. To this end, we introduce 
\begin{align*}
\widetilde  \Psi_h    &:= \prod_{T\in \mathcal{G}_h} \cP_m(T) \cap H^{1/2}_0(\Gamma)
 \subset H_0^{1/2}(\Gamma):=H^{1/2}(\Gamma)\cap L^2_0(\Gamma) 
\end{align*}
and consider the problem: find $(\bsigma_h, \hat{u}_h)\in \bSigma_h\times (V_h \times \widetilde \Psi_h)$ such that
\begin{equation}\label{ldg-FemConfBem}
 \begin{array}{rcll}
 a(\bsigma_h,\btau) + b(\btau, \hat{u}_h) &=& \dual{g_0, \btau\cdot \n}_{\Gamma} + 
 \dual{Vg_1, \btau \cdot \n}_{\Gamma} &\forall \btau\in \bSigma_h\\[2ex]
 -b(\bsigma_h, \hat{v})+ c(\hat{u}_h, \hat{v}) &=& (f,v)_{\Omega} + \dual{\alpha g_0, v-\varphi}_{\Gamma}
 +\dual{( \frac{\text{id}}{2} + K')g_1, \varphi}_{\Gamma}
 & \forall \hat{v}\in V_h\times \widetilde\Psi_h.
 \end{array}
\end{equation}
Note that the restriction of the bilinear form $d(\cdot, \cdot)$, used in the definition of $c(\cdot, \cdot)$ and 
introduced in \eqref{d}, reduces to 
\[
 d(\psi, \varphi) := \dual{V\curl_\Gamma \psi, \curl_\Gamma\varphi }_\Gamma
\]
for functions $\psi$ and $\varphi$ in $\widetilde\Psi_h$. This will simplify considerably the analysis of the scheme.
All the other bilinear forms in \eqref{ldg-FemConfBem} remain unchanged. 
Apart from the fact that the shape regular conforming triangulation $\cG_h:=\set{T}$ is no longer needed 
to be quasi-uniform, in the sequel, we will use the same hypothesis on the triangulations 
and we will also use the same notations introduced in previous sections.  

The well-posedness and the consistency of the scheme \eqref{ldg-FemConfBem} follow 
by straightforward simplifications of the 
arguments used in the proofs of Propositions \ref{wellposed} and \ref{consistency0}.
\begin{prop}\label{consistencyConf}
The LDG-FEM/BEM  defined by \eqref{ldg-FemConfBem} provides a unique approximate solution $(\bsigma_h, (u_h,\psi_h))
\in \bSigma_h\times (V_h\times \widetilde\Psi_h)$. Moreover, if $u$ is the solution of \eqref{ModelProblem} in $\Omega$, 
$\bsigma:= \nabla u$, $\psi:=u|_\Gamma - g_0$ and  $u\in H^2(\Omega)$ then,  
 \[
  A(\bsigma, (u, \psi); \btau, (v,\varphi)) = F(\btau, (v, \varphi))\quad \forall \btau \in \bSigma_h, \quad 
  \forall (v,\varphi)\in V_h\times \widetilde\Psi_h.
 \]
\end{prop}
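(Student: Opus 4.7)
The plan is to mirror the proofs of Propositions~\ref{wellposed} and \ref{consistency0}, exploiting the fact that $\widetilde{\Psi}_h \subset H^{1/2}_0(\Gamma)$ contains only conforming functions, so all the edge-jump contributions built into $d(\cdot,\cdot)$ and into $\jump{\hat v}$ at interior edges of $\cG_h$ disappear.

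\textbf{Well-posedness.} I would follow the pattern of Proposition~\ref{wellposed}: assume $f=0$, $g_0=0$, $g_1=0$ and test \eqref{ldg-FemConfBem} with $\btau=\bsigma_h$ and $\hat v=\hat u_h$ to obtain
\[
 a(\bsigma_h,\bsigma_h) + c(\hat u_h, \hat u_h) = 0 .
\]
With the simplified $d(\psi,\varphi)=\dual{V\curl_\Gamma\psi,\curl_\Gamma\varphi}_\Gamma$, coercivity \eqref{eq-coercive-1} of $V$ gives $\bsigma_h=\mathbf 0$, $\curl_\Gamma\psi_h=0$ and $\jump{\hat u_h}=0$ on $\cF_h$. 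Since $\psi_h$ is continuous on $\Gamma$ and $\curl_\Gamma\psi_h=0$ on the connected surface $\Gamma$, $\psi_h$ is constant; the zero-mean constraint of $\widetilde\Psi_h$ forces $\psi_h\equiv 0$. The vanishing of $\jump{\hat u_h}$ on boundary faces then yields $u_h=\psi_h=0$ on $\Gamma$, and the argument of Proposition~\ref{wellposed} using $b(\btau,\hat u_h)=-(\nabla_h u_h,\btau)_\Omega=0$ with $\nabla_h(V_h)\subset\bSigma_h$ concludes that $u_h\equiv 0$.

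\textbf{Consistency.} I would repeat the computation of Proposition~\ref{consistency0} with $\varphi\in\widetilde\Psi_h$ instead of $\Psi_h$. The only place where the conforming structure matters is the handling of the bilinear form $d$: instead of invoking the piecewise integration by parts that introduced the $\boldsymbol T$-terms on $\cE_h$, I would use directly
\[
 d(\psi,\varphi) = \dual{V\curl_\Gamma\psi,\curl_\Gamma\varphi}_\Gamma = \dual{W\psi,\varphi}_\Gamma,
\]
which is valid for any $\psi\in H^{1/2}(\Gamma)$ and $\varphi\in H^{1/2}(\Gamma)$ by the identity $W=\mathrm{curl}_\Gamma V\curl_\Gamma$ recalled in Section~\ref{s2}. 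No edge-jump term appears, which is why the regularity hypothesis $W\psi\in L^2(\Gamma)$ required in Proposition~\ref{consistency0} can be dropped. The remaining manipulations (splitting the interior term via elementwise integration by parts on $\cT_h$, rewriting boundary contributions through the integral equations \eqref{inteq1} and \eqref{inteq2}, and recognizing the right-hand side $F(\btau,(v,\varphi))$) are literally those of Proposition~\ref{consistency0}.

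\textbf{Where the care is needed.} No step looks genuinely hard; the only delicate point is checking that every edge-of-$\cG_h$ term that appeared in Proposition~\ref{consistency0} (namely the $\dual{\boldsymbol T\psi,\jump{\varphi}}_{\cE_h}$ contribution and the jump part of $\jump{\hat v}$ on $\Gamma$) vanishes automatically because $\varphi,\psi_h\in \widetilde\Psi_h$ are globally continuous on $\Gamma$, and that the coercivity argument for the conforming case does not require the quasi-uniformity of $\cG_h$ -- indeed, the simplified $d$ is directly bounded from below by $|\varphi|_{1/2,\Gamma}^2$ via \eqref{eq-coercive-1} without recourse to Lemma~\ref{lemaVcurl} or the discrete Poincaré inequality \eqref{discP}, which is why quasi-uniformity can now be dropped.
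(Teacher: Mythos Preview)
Your proposal is correct and matches the paper's own approach: the paper simply states that well-posedness and consistency follow ``by straightforward simplifications of the arguments used in the proofs of Propositions~\ref{wellposed} and \ref{consistency0},'' and you have spelled out exactly those simplifications. One very minor remark: for the lower bound on $d(\varphi,\varphi)$ in terms of $\|\varphi\|_{1/2,\Gamma}^2$ you could invoke \eqref{eq-coercive-2} directly (since $d(\varphi,\varphi)=\langle W\varphi,\varphi\rangle_\Gamma$ and $\varphi$ has zero mean), which is how the paper obtains \eqref{coerConf}; your route through \eqref{eq-coercive-1} and $\curl_\Gamma\psi_h=0\Rightarrow\psi_h$ constant is equally valid for the uniqueness argument.
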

 
Reexamining carefully the proof of Lemma \ref{boundA} we obtain the following stability property for  
scheme \eqref{ldg-FemConfBem}.
\begin{lemma}\label{boundAconf}
Let us assume that $\bsigma \in H^{1/2+\varepsilon}(\Omega)^3$ with $\varepsilon> 0$.
 Then, 
 \[
  \abs{A(\pi_{\bsigma}, \pi_{\hat{u}}; \btau, \hat{v})} \lesssim \,  
  \norm{(\pi_{\bsigma}, \pi_{\hat{u}})}_*^c\,  \norm{(\btau, \hat{v})}^c
  \quad \forall  (\btau, \hat{v})\in \bSigma_h \times (V_h\times \widetilde\Psi_h),
 \]
 where 
 \[
 \norm{(\btau, \hat{v})}^c := \left(\norm{\btau}^2_{0,\Omega} + \norm{\btau\cdot\n}^2_{-1/2,\Gamma} + 
 \norm{\varphi}^2_{1/2,\Gamma} + \norm{\alpha^{1/2} \jump{\hat{v}}}^2_{0,\cF_h}\right)^{1/2}
\]
and 
\[
 \norm{(\btau, \hat{v})}^c_*:= \left(\norm{(\btau, \hat{v})}^2 + 
 \sum_{K\in \cT_h}\norm{\alpha^{-1/2} \btau\cdot \n_K}^2_{0,\partial K}+
\sum_{K\in \cT_h} \norm{\alpha^{1/2} v}^2_{0,\partial K}
 \right)^{1/2}.
\]
\end{lemma}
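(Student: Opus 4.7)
The plan is to mirror the proof of Lemma \ref{boundA}, splitting
\[
|A(\pi_{\bsigma}, \pi_{\hat{u}}; \btau, \hat{v})| \le T_1 + T_2 + T_3 + T_4
\]
into the same four pieces corresponding to $a(\pi_{\bsigma},\btau)$, $c(\pi_{\hat{u}},\hat{v})$, $b(\pi_{\bsigma},\hat{v})$, and $b(\btau,\pi_{\hat{u}})$. The essential simplification is conformity: every $\varphi\in\widetilde\Psi_h\subset H^{1/2}_0(\Gamma)$ satisfies $\jump{\varphi}_e=0$ for all $e\in\cE_h$, so the form $d(\cdot,\cdot)$ collapses to $\dual{V\curl_\Gamma\psi,\curl_\Gamma\varphi}_\Gamma$ and the full $H^{1/2}(\Gamma)$-norm of $\varphi$ is at our disposal in $\norm{(\btau,\hat{v})}^c$.

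For $T_1$ I would just apply Cauchy--Schwarz to $a(\pi_{\bsigma},\btau)=(\pi_{\bsigma},\btau)_\Omega+\dual{\btau\cdot\n,V\pi_{\bsigma}\cdot\n}_\Gamma$ using $V:H^{-1/2}(\Gamma)\to H^{1/2}(\Gamma)$, exactly as in \eqref{T1}. For $T_2$ only two summands survive in $c(\pi_{\hat{u}},\hat{v})$; Cauchy--Schwarz combined with the boundedness of $V$ and $\curl_\Gamma\colon H^{1/2}(\Gamma)\to H^{-1/2}(\Gamma)^3$ yields
\[
T_2\lesssim \norm{\alpha^{1/2}\jump{\pi_{\hat u}}}_{0,\cF_h}\norm{\alpha^{1/2}\jump{\hat{v}}}_{0,\cF_h}+\norm{\tilde\pi_\psi}_{1/2,\Gamma}\,\norm{\varphi}_{1/2,\Gamma},
\]
both factors now directly absorbed into $\norm{(\pi_{\bsigma},\pi_{\hat{u}})}_*^c$ and $\norm{(\btau,\hat{v})}^c$. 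Note that Lemma \ref{cotaT} is no longer needed because the $\boldsymbol T$-coupling terms have vanished.

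For $T_3$ I use the Raviart--Thomas orthogonality $(\pi_{\bsigma},\nabla v)_K=0$, which leaves
\[
T_3=\Bigl|\dual{\pi_{\bsigma}\cdot\n,(\tfrac{\text{id}}{2}-K)\varphi}_\Gamma+\dual{\jump{v},\mean{\pi_{\bsigma}}-\jump{\pi_{\bsigma}}\bbeta}_{\cF_h^0}+\dual{\jump{\hat{v}},\pi_{\bsigma}}_\Gamma\Bigr|.
\]
Here is the only place where the conforming vs.\ non-conforming analyses genuinely diverge, and it is where the $|\log h|^{1/2}$ factor is defeated: instead of reducing $\norm{(\tfrac{\text{id}}{2}-K)\varphi}_{0,\Gamma}$ to broken seminorms through the fractional Poincar\'e inequality \eqref{discP}, I invoke the $H^{1/2}(\Gamma)\to H^{1/2}(\Gamma)$ boundedness of $K$ directly in the duality pairing to obtain
\[
\bigl|\dual{\pi_{\bsigma}\cdot\n,(\tfrac{\text{id}}{2}-K)\varphi}_\Gamma\bigr|\lesssim \norm{\pi_{\bsigma}\cdot\n}_{-1/2,\Gamma}\,\norm{\varphi}_{1/2,\Gamma}.
\]
The remaining two pieces of $T_3$ are estimated by Cauchy--Schwarz, hypothesis \eqref{coef}, and Proposition \ref{card} verbatim as in \eqref{T3}.

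Term $T_4$ is handled as in the proof of Lemma \ref{boundA}: integration by parts plus $(\pi_u,\textrm{div}\btau)_K=0$ gives
\[
T_4=\Bigl|\dual{\jump{\pi_u},\bbeta\jump{\btau}+\mean{\btau}}_{\cF_h^0}+\dual{\btau\cdot\n,(\tfrac{\text{id}}{2}+K)\tilde\pi_\psi}_\Gamma\Bigr|,
\]
and again the $H^{1/2}$-boundedness of $K$ yields $\norm{(\tfrac{\text{id}}{2}+K)\tilde\pi_\psi}_{1/2,\Gamma}\lesssim \norm{\tilde\pi_\psi}_{1/2,\Gamma}$ without any logarithmic penalty, after which Cauchy--Schwarz and Proposition \ref{card} finish the bound. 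Summing $T_1+T_2+T_3+T_4$ gives the claim. The ``main obstacle'' is really the realisation that there is none: conformity of $\widetilde\Psi_h$ erases simultaneously the need for the discrete Poincar\'e inequality \eqref{discP}, for Lemma \ref{cotaT}, and for controlling $\dual{\boldsymbol T\tilde\pi_\psi,\jump{\varphi}}_{\cE_h}$, which together were responsible for the $|\log h|^{1/2}$ factor in Lemma \ref{boundA}.
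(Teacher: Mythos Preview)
Your proposal is correct and follows exactly the approach the paper indicates, namely to re-examine the proof of Lemma~\ref{boundA} and exploit that for $\varphi\in\widetilde\Psi_h$ one has $\jump{\varphi}_e=0$ and the full $H^{1/2}(\Gamma)$-norm available. You have correctly pinpointed the three simplifications (no $\boldsymbol T$-terms in $T_2$, no need for Lemma~\ref{cotaT}, and the direct $H^{-1/2}\times H^{1/2}$ duality in $T_3$ replacing the broken Poincar\'e inequality~\eqref{discP}) that together eliminate the $|\log h|^{1/2}$ factor.
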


It is straightforward to deduce from \eqref{eq-coercive-1} and \eqref{eq-coercive-2} that
\begin{equation}\label{coerConf}
 A(\btau, \hat{v};\btau, \hat{v}) \gtrsim (\norm{(\btau, \hat{v})}^c)^2 
 \quad \forall \btau\in \mathbf{H}^1(\cT_h), \quad \forall \hat{v}=(v,\varphi) \in H^1(\cT_h)\times H^{1/2}_0(\Gamma).
\end{equation}
Combining \eqref{coerConf} with Lemma \ref{boundAconf} yields the following error estimate. 
\begin{theorem}\label{ceaConf}
Under the hypothesis of Proposition \ref{consistencyConf},  
\[
 \norm{(e_{\bsigma}, e_{\hat{u}})}^c \lesssim 
  \norm{(\pi_{\bsigma}, \pi_{\hat{u}})}^c_*.
\]
\end{theorem}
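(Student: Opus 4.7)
The plan is to imitate the proof of Theorem \ref{cea} line by line, replacing the boundedness inequality of Lemma \ref{boundA} by Lemma \ref{boundAconf} and the inf-sup condition \eqref{coer} by the direct coercivity \eqref{coerConf}. Since neither replacement carries a logarithmic factor, the $|\log h|^{1/2}$ present in Theorem \ref{cea} will be absent here.

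First, I would write down the Galerkin orthogonality. Subtracting \eqref{ldg-FemConfBem} from the consistency identity provided by Proposition \ref{consistencyConf} yields
\[
A(e_{\bsigma}, e_{\hat{u}}; \btau, \hat{v}) = 0 \qquad \forall (\btau, \hat{v}) \in \bSigma_h \times (V_h \times \widetilde\Psi_h).
\]
Next, the coercivity \eqref{coerConf} immediately gives, for any $(\btau', \hat{v}') \in \bSigma_h \times (V_h \times \widetilde\Psi_h)$, the inf-sup estimate
\[
\norm{(\btau', \hat{v}')}^c \;\lesssim\; \sup_{(\btau, \hat{v}) \in \bSigma_h \times (V_h \times \widetilde\Psi_h)} \frac{A(\btau', \hat{v}'; \btau, \hat{v})}{\norm{(\btau, \hat{v})}^c},
\]
since the supremum is bounded below at $(\btau, \hat{v}) = (\btau', \hat{v}')$ by $\gtrsim \norm{(\btau', \hat{v}')}^c$. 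Applying this with $(\btau', \hat{v}') = (\bsigma_h - \bPi \bsigma,\,(u_h - \Pi u,\, \psi_h - \tilde\pi \psi))$, using Galerkin orthogonality to rewrite the numerator as $A(\pi_{\bsigma}, \pi_{\hat{u}}; \btau, \hat{v})$, and then invoking Lemma \ref{boundAconf} to bound this by $\norm{(\pi_{\bsigma}, \pi_{\hat{u}})}_*^c\,\norm{(\btau, \hat{v})}^c$, I would obtain
\[
\norm{(\bsigma_h - \bPi \bsigma,\,(u_h - \Pi u, \psi_h - \tilde\pi \psi))}^c \;\lesssim\; \norm{(\pi_{\bsigma}, \pi_{\hat{u}})}_*^c.
\]
The triangle inequality together with $\norm{(\pi_{\bsigma}, \pi_{\hat{u}})}^c \le \norm{(\pi_{\bsigma}, \pi_{\hat{u}})}_*^c$ (immediate from the definitions of these seminorms) then concludes.

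The argument presents no substantive obstacle once Lemma \ref{boundAconf} and inequality \eqref{coerConf} are in hand; both essentially follow from the corresponding proofs in the nonconforming case by dropping the terms involving $\jump{\varphi}_{\cE_h}$ and $\boldsymbol T$, which are precisely the terms responsible for the logarithmic loss. The only technical point worth checking is that $\tilde\pi \psi$ indeed belongs to $\widetilde\Psi_h$: continuity on $\Gamma$ is automatic because $\tilde\pi$ is the conforming Lagrange interpolant, while the zero-mean requirement inherited from $H^{1/2}_0(\Gamma)$ can be enforced by subtracting a constant from $\tilde\pi\psi$, which does not spoil any of the needed interpolation error bounds.
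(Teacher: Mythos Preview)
Your proof is correct and matches the paper's approach exactly: the paper simply states that the theorem follows from combining \eqref{coerConf} with Lemma \ref{boundAconf}, which is precisely the argument you spell out (coercivity gives the discrete inf-sup, Galerkin orthogonality transfers the numerator to the interpolation error, boundedness closes the estimate, triangle inequality finishes). Your remark about enforcing the zero-mean constraint on $\tilde\pi\psi$ by subtracting a constant is a valid technical point that the paper leaves implicit.
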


In the conforming BEM case, we can also provide an estimate of the error $u - u_h$ in 
the $L^2(\Omega)$-norm. To this end, we follow \cite{CastilloCockburn} and use a duality 
argument. For any $\rho \in L^2(\Omega)$ we consider the exterior problem 
\begin{equation*}\label{auxiliary}
\begin{array}{rcll}
\Delta w &=& \tilde\rho         &\text{in $\mathbb{R}^3$},\\[2ex]
        w &=& O(\disp\frac{1}{\abs{\x}})  &\text{as $\abs{\x}\to \infty$},
\end{array}
\end{equation*}
where $\tilde \rho$ is the extension by zero of $\rho$  outside $\Omega$. It is well known from the theory 
of regularity of elliptic problems that $w\in H^2(\Omega)$ and there exists $C_{reg}>0$ such that 
\begin{equation}\label{reg}
 \norm{w}_{2,\Omega}\leq C_{reg} \norm{\rho}_{0,\Omega}.
\end{equation}

\begin{prop}\label{ident}
 Let us introduce $\btau_\rho := \nabla w$ and $\hat{v}_\rho = (v_\rho, \varphi_\rho):=(-w|_\Omega,  -w|_\Gamma)$. Then,
 \[
  A(\bsigma,\hat{u}; \btau_\rho, \hat{v}_\rho ) = (u,\rho )_\Omega \quad \forall \bsigma\in \mathbf{H}^1(\cT_h), 
  \quad \forall \hat{u}:= (u,\psi)\in H^1(\cT_h)\times H^{1/2}(\Gamma). 
 \]
\end{prop}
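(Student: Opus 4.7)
The plan is to exploit the global $H^2$-regularity of $w$ across $\Gamma$ (which follows from $\Delta w = \tilde\rho \in L^2(\mathbb{R}^3)$) to make most of the DG/BEM machinery collapse, and then to use the integral equations \eqref{inteq1}--\eqref{inteq2} applied to the exterior part of $w$ to match the remaining boundary terms.

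First, set $\psi_w := w|_\Gamma$ and $\lambda_w := \partial w/\partial \n$; by global $H^2_{\mathrm{loc}}$ regularity these are well defined as the common trace and normal derivative from either side of $\Gamma$. Since $w$ is harmonic in $\Omega^e$ and decays at infinity, the exterior integral equations \eqref{inteq1}--\eqref{inteq2} apply to $w|_{\Omega^e}$, giving the identities
\[
(\tfrac{\mathrm{id}}{2} - K)\psi_w = -V\lambda_w,
\qquad
W\psi_w = -(\tfrac{\mathrm{id}}{2} + K')\lambda_w.
\]
Also observe that $\btau_\rho = \nabla w \in H^1(\Omega)^3$ has no interior face jumps ($\jump{\btau_\rho}=0$, $\mean{\btau_\rho}=\nabla w$), that $v_\rho=-w$ has no jumps on interior faces, and that on boundary faces $v_\rho - \varphi_\rho = -w|_\Omega - (-w|_\Gamma) = 0$; consequently $\jump{\hat v_\rho}=0$ on all of $\cF_h$.

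Next, I would substitute these observations into the definition \eqref{A} of $A$. The volume contributions from $a$ and $-b(\bsigma,\hat v_\rho)$ yield $(\bsigma,\nabla w)_\Omega-(\nabla w,\bsigma)_\Omega=0$, while the $V$-boundary terms $\dual{\lambda_w,V(\bsigma\cdot\n)}_\Gamma - \dual{\bsigma\cdot\n,V\lambda_w}_\Gamma$ cancel by symmetry of $V$. All jump terms involving $\btau_\rho$ or $\hat v_\rho$ drop out. The $K$-boundary term in $b(\btau_\rho,\hat u)$ contributes $\dual{\lambda_w,(\tfrac{\mathrm{id}}{2}-K)\psi}_\Gamma$, and the hypersingular term in $c(\hat u,\hat v_\rho)=-d(\psi,\psi_w)=-\dual{W\psi_w,\psi}_\Gamma$ (using the conforming form of $d$ together with the $W=\mathrm{curl}_\Gamma V \curl_\Gamma$ representation) produces, via the second Calder\'on identity above, $\dual{\lambda_w,(\tfrac{\mathrm{id}}{2}+K)\psi}_\Gamma$; these two sum to $\dual{\lambda_w,\psi}_\Gamma$.

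The remaining piece is $-(\nabla_h u,\nabla w)_\Omega + \dual{\jump{u},\nabla w}_{\cF_h^0} + \dual{\jump{\hat u},\btau_\rho}_\Gamma$. Here I would apply element-wise integration by parts
\[
(\nabla_h u,\nabla w)_\Omega = -(u,\Delta w)_\Omega + \sum_{K\in\cT_h}\dual{u,\partial_{\n_K} w}_{\partial K},
\]
and, using that $\nabla w$ has no interface jumps, rewrite the skeleton term as $\dual{\jump{u},\nabla w}_{\cF_h^0} + \dual{u,\lambda_w}_\Gamma$. Since $\Delta w=\rho$ in $\Omega$, this turns $-(\nabla_h u,\nabla w)_\Omega + \dual{\jump{u},\nabla w}_{\cF_h^0}$ into $(u,\rho)_\Omega - \dual{u,\lambda_w}_\Gamma$. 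Finally, using the definition of $\jump{\hat u}$ on $\Gamma$, the remaining boundary contributions read $-\dual{u,\lambda_w}_\Gamma + \dual{(u-\psi),\lambda_w}_\Gamma + \dual{\lambda_w,\psi}_\Gamma = 0$, leaving exactly $(u,\rho)_\Omega$.

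The only delicate step is the bookkeeping of signs and of the two Calder\'on identities for $w$; once those are written down correctly, everything else is a mechanical cancellation driven by the symmetry of $V$ and the fact that $w$ is smooth across $\Gamma$, so its derivatives have no jump contributions.
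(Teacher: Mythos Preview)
Your proof is correct and follows essentially the same route as the paper's: expand $A$ via its definition, use the $H^2$-regularity of $w$ to kill all jump terms in $\btau_\rho$ and $\hat v_\rho$, invoke the two exterior Calder\'on identities for $w$ to cancel the boundary integral operator contributions, and finish with element-wise integration by parts to produce $(u,\rho)_\Omega$. The only cosmetic difference is that you group the cancellations by type (volume, $V$-terms, $K/W$-terms, skeleton terms), whereas the paper writes the full expansion first and then applies the identities; the content is the same. One small point of presentation: when you say the ``$V$-boundary terms $\dual{\lambda_w,V(\bsigma\cdot\n)}_\Gamma - \dual{\bsigma\cdot\n,V\lambda_w}_\Gamma$ cancel by symmetry of $V$'', the second term only appears after you have already applied the first Calder\'on identity to $\dual{\bsigma\cdot\n,(\tfrac{\mathrm{id}}{2}-K)\psi_w}_\Gamma$ coming from $-b(\bsigma,\hat v_\rho)$; making that substitution explicit would remove any ambiguity.
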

\begin{proof}
 The definition \eqref{A} of the bilinear form $A(\cdot, \cdot)$ yields
 \begin{multline*}
  A(\bsigma,\hat{u}; \btau_\rho, \hat{v}_\rho) = ( \bsigma, \nabla w )_{\cT_h} + \dual{\bsigma\cdot\n, V\nder{w}}_\Gamma - 
  (\nabla_h u, \nabla w)_{\cT_h} +\dual{\nder{w}, (\frac{\text{id}}{2} - K)\psi}_\Gamma\\[1ex]
  + \dual{\jump{u}, \mean{\nabla w}}_{\cF_h^0} + \dual{\nder{w}, u-\psi}_{\cF_h^\partial} - 
  (\nabla w, \bsigma)_{\cT_h} + \dual{\bsigma\cdot \n, (\frac{\text{id}}{2} - K)w}_\Gamma
  - \dual{W \psi, w}_\Gamma.
 \end{multline*}
Taking into account that 
 \[
  V\nder{w} + (\frac{\text{id}}{2} - K)w = 0 \quad \text{and} \quad (\frac{\text{id}}{2} + K')\nder{w} + Ww = 0
 \]
we deduce that  
\begin{multline*}
 A(\bsigma,\hat{u}; \btau_\rho, \hat{v}_\rho) = - \sum_{K\in \cT_h} (\nabla u, \nabla w)_K + \dual{\jump{u}, \mean{\nabla w}}_{\cF_h^0} 
 + \dual{\nder{w}, u}_{\cF_h^\partial}\\[1ex]
 = \sum_{K\in \cT_h} (\Delta w, u)_K - \sum_{K\in \cT_h} \dual{\nder{w}, u}_{\partial K}+ \dual{\jump{u}, \mean{\nabla w}}_{\cF_h^0} 
 + \dual{\nder{w}, u}_{\cF_h^\partial} = (\rho, u)_\Omega 
\end{multline*}
and the result follows.
\end{proof}

\begin{lemma}\label{duality}
 It holds  that 
 \[
  \norm{e_u}_{0,\Omega} \lesssim
 \norm{(\pi_{\bsigma}, \pi_{\hat{u}})}_{**}
 \sup_{\rho\in L^2(\Omega)}
 \frac{\norm{(\pi_{\btau_\rho}, \pi_{\hat{v}_\rho})}_{**}}{\norm{\rho}_{0,\Omega}},
 \]
 where
 \[
 \norm{(\btau, \hat{v})}_{**} := \left( 
 (\norm{(\btau, \hat{v})}^c_*)^2 + \sum_{K\in \cT_h}\norm{\nabla v_K}^2_{0,K}
 \right)^{1/2}.
\]
\end{lemma}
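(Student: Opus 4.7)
The strategy is an Aubin--Nitsche duality exploiting Proposition~\ref{ident}.

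I would first apply Proposition~\ref{ident} with $(\bsigma,\hat{u})$ replaced by the errors $(e_\bsigma,e_{\hat u})$; since exact and discrete solutions both lie in the admitted spaces, so do the errors, and I obtain $(e_u,\rho)_\Omega = A(e_\bsigma,e_{\hat u};\btau_\rho,\hat v_\rho)$ for every $\rho\in L^2(\Omega)$. Proposition~\ref{consistencyConf} together with~\eqref{ldg-FemConfBem} provides Galerkin orthogonality $A(e_\bsigma,e_{\hat u};\btau_h,\hat v_h)=0$ for all discrete test pairs; subtracting the discrete projection $(\bPi\btau_\rho,(\Pi v_\rho,\tilde\pi\varphi_\rho))$ (adjusted by a constant on the last component so that it belongs to $\widetilde\Psi_h$) then gives
\[
 (e_u,\rho)_\Omega \;=\; A(e_\bsigma,e_{\hat u};\pi_{\btau_\rho},\pi_{\hat v_\rho}).
\]

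The next step is the continuity estimate
\[
 \bigl|A(e_\bsigma,e_{\hat u};\pi_{\btau_\rho},\pi_{\hat v_\rho})\bigr| \;\lesssim\; \norm{(\pi_\bsigma,\pi_{\hat u})}_{**}\,\norm{(\pi_{\btau_\rho},\pi_{\hat v_\rho})}_{**}.
\]
Writing $e_\bsigma=\pi_\bsigma+\delta_\bsigma$ with $\delta_\bsigma:=\bPi\bsigma-\bsigma_h\in\bSigma_h$ and likewise $e_{\hat u}=\pi_{\hat u}+\delta_{\hat u}$ with $\delta_{\hat u}\in V_h\times\widetilde\Psi_h$, Theorem~\ref{ceaConf} bounds $\norm{(\delta_\bsigma,\delta_{\hat u})}^c$ by $\norm{(\pi_\bsigma,\pi_{\hat u})}^c_*\leq\norm{(\pi_\bsigma,\pi_{\hat u})}_{**}$. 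Expanding $A$ via~\eqref{A}, the terms in which at least one factor is discrete are controlled as in the proof of Lemma~\ref{boundAconf}: the Raviart--Thomas orthogonalities $(\pi_u,\textrm{div}\,\delta_\bsigma)_K=0$ and $(\nabla\delta_u,\pi_{\btau_\rho})_K=0$ eliminate the dangerous volume pieces of $b$, while the remaining boundary and skeleton terms yield products dominated by $\norm{\cdot}^c_*\cdot\norm{\cdot}^c_*$.

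The only genuinely new contributions are the ``interpolation-error times interpolation-error'' volume pieces of $b$, namely $(\nabla_h\pi_{v_\rho},\pi_\bsigma)_\Omega$ inside $-b(\pi_\bsigma,\pi_{\hat v_\rho})$ and $-(\nabla_h\pi_u,\pi_{\btau_\rho})_\Omega$ inside $b(\pi_{\btau_\rho},\pi_{\hat u})$. Cauchy--Schwarz bounds them by $\norm{\pi_\bsigma}_{0,\Omega}\bigl(\sum_K\norm{\nabla\pi_{v_\rho}}^2_{0,K}\bigr)^{1/2}$ and $\bigl(\sum_K\norm{\nabla\pi_u}^2_{0,K}\bigr)^{1/2}\norm{\pi_{\btau_\rho}}_{0,\Omega}$ respectively, which is precisely why the norm $\norm{\cdot}_{**}$ has to include the extra $\sum_K\norm{\nabla v_K}^2_{0,K}$ summand absent from $\norm{\cdot}^c_*$. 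Dividing by $\norm{\rho}_{0,\Omega}$ and taking the supremum over $\rho\in L^2(\Omega)$ then closes the argument. The main obstacle is the careful bookkeeping in this last step: verifying that these two volume ``error $\times$ error'' terms are the only contributions forcing the new summand into $\norm{\cdot}_{**}$, and that no other logarithmic or negative-order factor slips in.
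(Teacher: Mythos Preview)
Your proposal is correct and follows the same duality strategy as the paper: apply Proposition~\ref{ident} to the error, use Galerkin orthogonality to replace $(\btau_\rho,\hat v_\rho)$ by $(\pi_{\btau_\rho},\pi_{\hat v_\rho})$, split $e=\pi+\delta$ with $\delta$ discrete, bound $A(\delta;\pi_\rho)$ via Lemma~\ref{boundAconf}-type arguments together with Theorem~\ref{ceaConf}, and finally bound the ``interpolation error against interpolation error'' piece $A(\pi_\bsigma,\pi_{\hat u};\pi_{\btau_\rho},\pi_{\hat v_\rho})$ directly, which is exactly where the extra gradient term in $\norm{\cdot}_{**}$ enters.

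The only tactical difference is in how $A(\delta_\bsigma,\delta_{\hat u};\pi_{\btau_\rho},\pi_{\hat v_\rho})$ is treated. You redo the estimates of Lemma~\ref{boundAconf} with the roles of the two slots swapped (this is fine: the orthogonality $(\nabla_h\delta_u,\pi_{\btau_\rho})_K=0$ and, after integration by parts, $(\pi_{v_\rho},\mathrm{div}\,\delta_\bsigma)_K=0$ kill the volume pieces just as before; note your displayed identity ``$(\pi_u,\mathrm{div}\,\delta_\bsigma)_K=0$'' should read $\pi_{v_\rho}$ rather than $\pi_u$). The paper instead exploits the identity $A(\bsigma,\hat u;\btau,\hat v)=A(-\btau,\hat v;-\bsigma,\hat u)$, valid in the conforming case because both $a$ and $c$ are symmetric, to swap the slots and invoke Lemma~\ref{boundAconf} verbatim. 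Both routes give the same bound; the paper's swap simply avoids repeating the term-by-term estimation.
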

\begin{proof}
We deduce from Proposition \ref{ident} that 
\[
 A(e_{\bsigma},e_{\hat{u}}; \btau_\rho, \hat{v}_\rho ) = (e_u,\rho )_\Omega,
\]
and it follows from the definition of $A(\cdot, \cdot)$ that  
\begin{multline*}
 (e_u,\rho )_\Omega = A( e_{\bsigma}, e_{\hat{u}} ; \pi_{\btau_\rho}, \pi_{\hat{v}_\rho} ) = 
 A( \bPi e_{\bsigma}, (\Pi e_u, \tilde\pi e_\psi ) ; \pi_{\btau_\rho}, \pi_{\hat{v}_\rho} ) + 
 A( \pi_{\bsigma}, \pi_{\hat{u}} ; \pi_{\btau_\rho}, \pi_{\hat{v}_\rho} )\\[1ex]
 = A(-\pi_{\btau_\rho} , \pi_{\hat{v}_\rho} ; -\bPi e_{\bsigma}, (\Pi e_u, \tilde\pi e_\psi )  ) + 
 A( \pi_{\bsigma}, \pi_{\hat{u}} ; \pi_{\btau_\rho}, \pi_{\hat{v}_\rho} ).
\end{multline*}
Consequently, by virtue of Lemma \ref{boundAconf} and Theorem \ref{ceaConf},
\begin{multline*}
 \norm{e_u}_{0,\Omega} \leq \sup_{\rho\in L^2(\Omega)}
 \frac{\abs{ A(-\pi_{{\btau}_\rho} , \pi_{\hat{v}_\rho} ; -\bPi e_{\bsigma}, (\Pi e_u, \tilde\pi e_\psi )  ) }}{\norm{\rho}_{0,\Omega}} + 
 \sup_{\rho\in L^2(\Omega)}\frac{\abs{ A( \pi_{\bsigma}, \pi_{\hat{u}} ; \pi_{\btau_\rho}, \pi_{\hat{v}_\rho} ) }}{\norm{\rho}_{0,\Omega}}\\[1ex]
 \lesssim
 \norm{(\pi_{\bsigma}, \pi_{\hat{u}})}_*^c
 \sup_{\rho\in L^2(\Omega)}
 \frac{\norm{(\pi_{\btau_\rho}, \pi_{\hat{v}_\rho})}_*^c}{\norm{\rho}_{0,\Omega}}+ 
 \sup_{\rho\in L^2(\Omega)}\frac{\abs{ A( \pi_{\bsigma}, \pi_{\hat{u}} ; \pi_{\btau_\rho}, \pi_{\hat{v}_\rho} ) }}{\norm{\rho}_{0,\Omega}}.
 \end{multline*}
Moreover, it follows from the straightforward  estimate 
\[
 \abs{ A( \pi_{\bsigma}, \pi_{\hat{u}} ; \pi_{\btau_\rho}, \pi_{\hat{v}_\rho} ) } \lesssim 
 \norm{(\pi_{\bsigma}, \pi_{\hat{u}})}_{**} \norm{(\pi_{\btau_\rho}, \pi_{\hat{v}_\rho})}_{**},
\]
that
\begin{equation*}\label{duality1}
 \norm{e_u}_{0,\Omega} \lesssim
 \norm{(\pi_{\bsigma}, \pi_{\hat{u}})}_{**}
 \sup_{\rho\in L^2(\Omega)}
 \frac{\norm{(\pi_{\btau_\rho}, \pi_{\hat{v}_\rho})}_{**}}{\norm{\rho}_{0,\Omega}},
\end{equation*}
which proves the result. 
\end{proof}
\begin{theorem}\label{mainConf}
Assume that the solution of \eqref{ModelProblem} satisfies 
$u\in H^{s+2}(\Omega)$ and $\psi\in H_{\pw}^{s+3/2}(\Gamma)\cap H^1(\Gamma)$ for some 
$s\geq 0$. Then, 
\[
 \norm{u - u_h}_{0,\Omega} + h \norm{(\bsigma - \bsigma_h, (u - u_h, \psi - \psi_h))}^c \lesssim \, h^{\min\{s+2, m+1\}	}
 (\norm{u}_{s+2,\Omega} + \norm{\psi}_{s+3/2,\pw,\Gamma} ).
\]
\end{theorem}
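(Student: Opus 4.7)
The plan is to establish the two terms on the left-hand side separately. The energy estimate for $\norm{(e_{\bsigma}, e_{\hat{u}})}^c$ follows immediately from Theorem \ref{ceaConf} (the C\'ea estimate in the conforming case) once I have a sharp bound on $\norm{(\pi_{\bsigma}, \pi_{\hat{u}})}_*^c$ in terms of the Sobolev seminorms of $u$ and $\psi$. The $L^2(\Omega)$-estimate for $e_u$ uses the duality bound provided by Lemma \ref{duality}, combined with the $H^2(\Omega)$-regularity \eqref{reg} of the adjoint problem.

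First, I would estimate each constituent of $\norm{(\pi_{\bsigma}, \pi_{\hat{u}})}_*^c$. The volume term $\norm{\pi_{\bsigma}}_{0,\Omega}$, the weighted face terms $\sum_K h_K\norm{\pi_{\bsigma}\cdot\n_K}^2_{0,\partial K}$ and $\sum_K h_K^{-1}\norm{\pi_u}^2_{0,\partial K}$, and the jump contribution $\norm{\alpha^{1/2}\jump{\pi_{\hat{u}}}}_{0,\cF_h}$ (noting that $\norm{\nu^{1/2}\jump{\tilde\pi_\psi}}_{0,\cE_h}$ vanishes since $\tilde\pi_\psi \in H^{1/2}(\Gamma)$) are standard applications of Lemmas \ref{v}, \ref{btau}, \ref{varphiHmedio}, Proposition \ref{card}, and hypotheses \eqref{localUnif}, \eqref{coef}; each contributes a rate of order $h^{\min\{s+1,m\}}\norm{u}_{s+2,\Omega}$ or $h^{\min\{s+1,m+1/2\}}\norm{\psi}_{s+3/2,\pw,\Gamma}$. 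The delicate step is controlling $\norm{\pi_{\bsigma}\cdot\n}_{-1/2,\Gamma}$ without assuming $\bsigma\cdot\n \in H^{s+1}_{\pw}(\Gamma)$: since the outward normal is piecewise constant on the flat faces of $\Gamma$ and $\bsigma \in H^{s+1}(\Omega)^3$, the trace theorem yields $\bsigma\cdot\n \in H^{s+1/2}_{\pw}(\Gamma)$ with norm bounded by $\norm{u}_{s+2,\Omega}$. Using that $(\bPi\bsigma)|_\Gamma\cdot\n$ is the best $L^2(\Gamma)$-approximation of $\bsigma\cdot\n$ in $\Lambda_h$, Lemma \ref{sigman} with $r = s+1/2$ and $t = 1/2$ then gives a rate $h^{\min\{s+1, m+1/2\}}$, which is no worse than the target $h^{\min\{s+1,m\}}$. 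Combining everything and invoking Theorem \ref{ceaConf} yields $\norm{(e_{\bsigma}, e_{\hat{u}})}^c \lesssim h^{\min\{s+1,m\}}(\norm{u}_{s+2,\Omega} + \norm{\psi}_{s+3/2,\pw,\Gamma})$, which after multiplication by $h$ contributes the factor $h^{\min\{s+2,m+1\}}$ to the left-hand side.

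For the $L^2$-error I would apply Lemma \ref{duality}. The factor $\norm{(\pi_{\bsigma}, \pi_{\hat{u}})}_{**}$ is treated exactly as above, with the extra term $\sum_K \norm{\nabla\pi_u}^2_{0,K}$ bounded by $h^{2\min\{s+1,m\}}\norm{u}^2_{s+2,\Omega}$ via Lemma \ref{v}. For the supremum over $\rho \in L^2(\Omega)$, the auxiliary solution satisfies $w \in H^2(\Omega)$ with $\norm{w}_{2,\Omega} \lesssim \norm{\rho}_{0,\Omega}$, so I would reapply the same interpolation bounds to $(\pi_{\btau_\rho}, \pi_{\hat{v}_\rho})$ with $s = 0$; recalling $m \geq 1$, this produces $\norm{(\pi_{\btau_\rho}, \pi_{\hat{v}_\rho})}_{**} \lesssim h\,\norm{\rho}_{0,\Omega}$. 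Taking the product of the two factors then yields $\norm{e_u}_{0,\Omega} \lesssim h\cdot h^{\min\{s+1,m\}}(\norm{u}_{s+2,\Omega}+\norm{\psi}_{s+3/2,\pw,\Gamma})$, which is the desired rate.

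The main obstacle is the handling of $\norm{\pi_{\bsigma}\cdot\n}_{-1/2,\Gamma}$: unlike in Theorem \ref{main}, no piecewise $H^{s+1}$-regularity of the normal flux is available in the hypotheses, so one has to squeeze just enough smoothness out of $\bsigma\cdot\n \in H^{s+1/2}_{\pw}(\Gamma)$ via the trace theorem and apply Lemma \ref{sigman} with fractional index $s+1/2$. Everything else is a careful bookkeeping exercise using the polynomial interpolation estimates and the locally quasi-uniform mesh assumption \eqref{localUnif}.
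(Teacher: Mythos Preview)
Your proof is correct and follows the same route as the paper: Theorem \ref{ceaConf} together with the interpolation Lemmas \ref{v}, \ref{btau}, \ref{varphiHmedio}, \ref{sigman} for the energy error, and Lemma \ref{duality} combined with the $H^2$-regularity \eqref{reg} of the adjoint problem (applied with $s=0$, $m\geq 1$) for the $L^2$-error. Your explicit treatment of $\norm{\pi_{\bsigma}\cdot\n}_{-1/2,\Gamma}$ via the trace theorem (deducing $\bsigma\cdot\n\in H^{s+1/2}_{\pw}(\Gamma)$ from $u\in H^{s+2}(\Omega)$ and then invoking Lemma \ref{sigman} with $r=s+1/2$) is a useful clarification that the paper leaves tacit when it simply refers back to \eqref{globest}.
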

\begin{proof}
 Similar arguments to those used in the proof of  Theorem \ref{main} yield
 \[
  \norm{(e_{\bsigma}, e_{\hat{u}})}^c \lesssim\, h^{\min\{s+1, m\}} \left(\norm{u}^2_{s+2,\Omega} + 
  \norm{\psi}^2_{s+3/2,\pw,\Gamma} \right)^{1/2}.
 \]
 By virtue of \eqref{globest} and Lemma \ref{v}, it is also true that 
 \[
  \norm{(\pi_{\bsigma}, \pi_{\hat{u}})}_{**} \lesssim\, h^{\min\{s+1, m\}} \left(\norm{u}^2_{s+2,\Omega} + 
  \norm{\psi}^2_{s+3/2,\pw,\Gamma} \right)^{1/2}
 \]
 and 
 \[
  \norm{(\pi_{\btau_\rho}, \pi_{\hat{v}_\rho})}_{**}  \lesssim\, h \norm{w}_{2,\Omega} \lesssim \, h \norm{\rho}_{0,\Omega},
 \]
where the last estimate comes from \eqref{reg}.
Consequently, using Lemma \ref{duality} we conclude that 
\[
 \norm{u-u_h}_{0,\Omega} \lesssim  h^{\min\{s+1, m\}+1} \left(\norm{u}^2_{s+2,\Omega} + 
  \norm{\psi}^2_{s+3/2,\pw,\Gamma} \right)^{1/2}
\]
and the result follows.
\end{proof}

\section{Numerical results}\label{s6}
In this section we present a numerical experiment confirming the theoretical error estimate
obtained for the LDG-FEM/DG-BEM scheme \eqref{ldg-FemBem} and the LDG-FEM/BEM scheme \eqref{ldg-FemConfBem}. 
For simplicity we consider our model problem in two dimensions. 
The corresponding theory and results from three dimensions apply with trivial modifications.

We choose $\Omega=(0,1)^2$ and select the data so that the exact solution is given by 
\[
   u(x_1, x_2)=\sin(10 x_1+3 x_2)\quad \text{in $\Omega$}\quad \text{and}\quad 
   u^e(x_1, x_2)=\frac{x_1+x_2-1}{(x_1-0.5)^2+(x_2-0.5)^2}\quad \text{in $\Omega_e$}.
\]
\begin{figure}
\begin{center}
\includegraphics[width=0.75\textwidth]{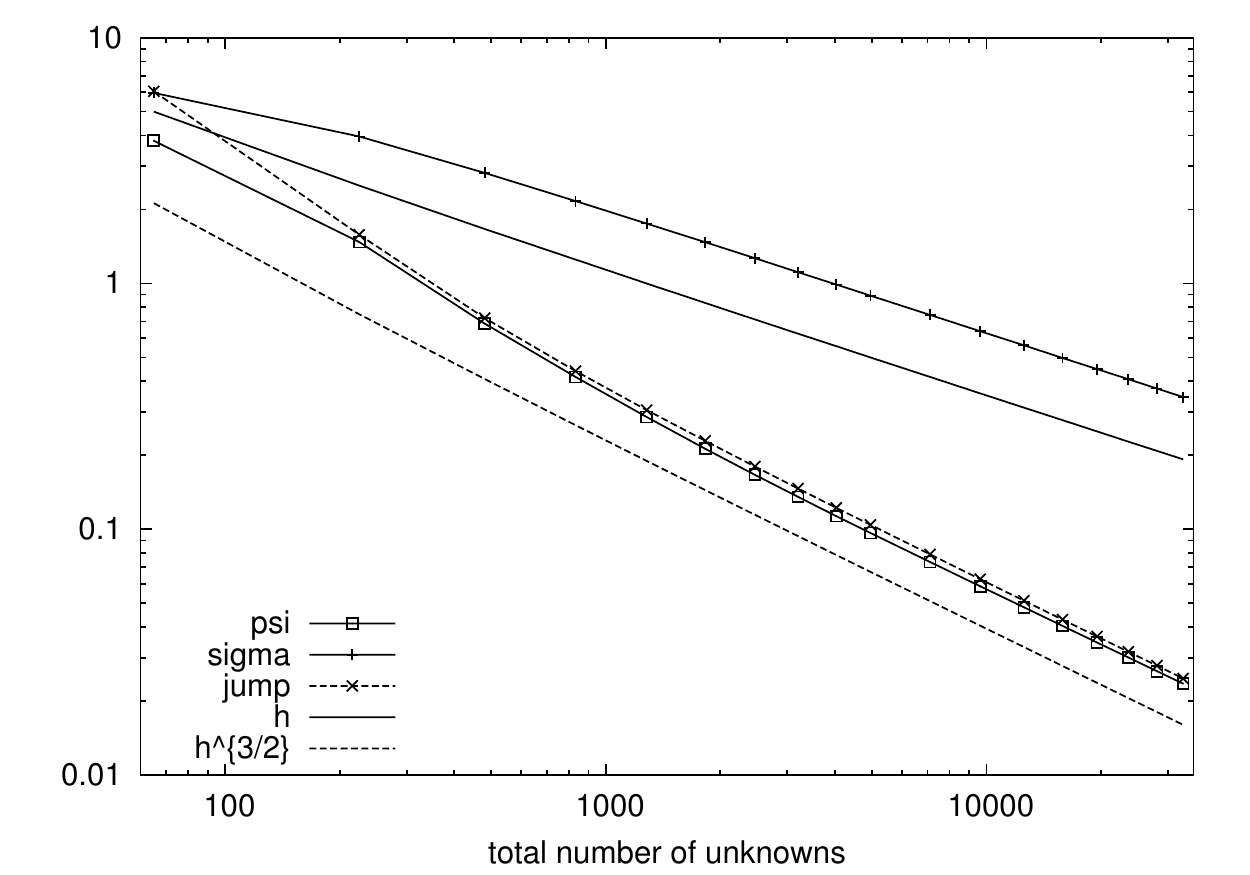} 
\end{center}
\caption{Errors and $O(h)$, $O(h^{3/2})$ versus total number of unknowns for the LDG-FEM/DG-BEM method.}
\label{fig}
\end{figure}
\begin{figure}
\begin{center}
\includegraphics[width=0.75\textwidth]{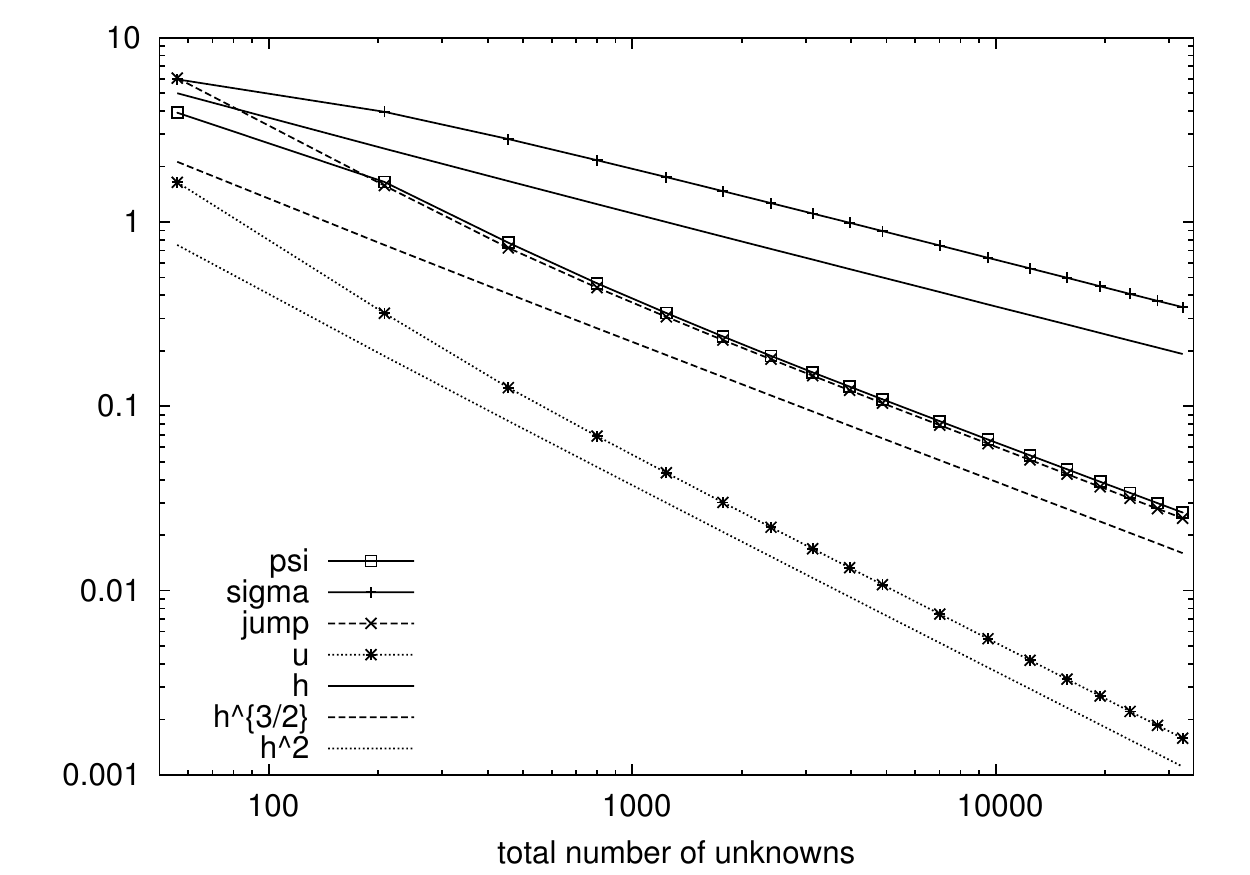} 
\end{center}
\caption{Errors and $O(h)$, $O(h^{3/2})$, $O(h^{2})$ versus total number of unknowns for the LDG-FEM/BEM method.}
\label{fig0}
\end{figure}
We consider uniform triangular meshes $\cT_h$ on $\Omega$ and inherited meshes $\cG_h$ on $\Gamma$
(and for simplicity denote $h$ to be the length of the shortest edge).
Lowest order discrete spaces are taken, i.e. $m=1$, so that
\[
 V_h = \prod_{K\in \mathcal{T}_h} \cP_1(K), \quad
 \bSigma_h = \prod_{K\in \cT_h} \mathbf{RT}_1(K),
\]
\[
\Psi_h= \set{\varphi \in L_0^{2}(\Gamma); \quad \varphi|_T\in P_1(T)\quad \forall T\in \cG_h}
\]
and
\[
\tilde\Psi_h= \set{\varphi \in H_0^{1/2}(\Gamma); \quad \varphi|_T\in P_1(T)\quad \forall T\in \cG_h}.
\]
Moreover, we select $\bbeta$ to be normal on the interior edges (in a certain direction)
with $|\bbeta|=1$, and $\alpha=h_{\cF}^{-1}$. In this case Theorem \ref{main} proves the  
behaviors $\norm{\psi-\psi_h}_{1/2,\cG_h}= O(h)$,   
$\norm{\bsigma-\bsigma_h}_{0,\Omega} + \norm{(\bsigma-\bsigma_h)\cdot\n}_{-1/2,\Gamma}=O(h)$ and 
$\norm{\jump{\hat u_h}}_{0,\cF_h} = O(h^{3/2})$.
In Figure~\ref{fig} the errors 
$\norm{\psi-\psi_h}_{[0,1],1/2,\cG_h}$, $\norm{\bsigma-\bsigma_h}_{0,\Omega}$ and 
$\norm{\jump{\hat u_h}}_{0,\cF_h}$ are labeled  \text{``psi''}, \text{``sigma''}
and \text{``jump''} respectively and they are depicted versus the total number of unknowns 
on a double-logarithmic scale.
Here,
\[
   \norm{\psi-\psi_h}_{[0,1],1/2,\cG_h}
   :=
   \Bigl(\norm{\psi-\psi_h}_{0,\Gamma}^2
         + \sum_{T\in\cG_h} \|\psi-\psi_h\|_{0,T} |\psi-\psi_h|_{1,T}\Bigr)^{1/2}
\]
which, by interpolation, is an upper bound for $\norm{\psi - \psi_h}_{1/2,\cG_h}$
up to a constant factor. 
The curves $h$ and $h^{3/2}$ are also given multiplied by appropriate factors to
shift them closer to the corresponding curves. The numerical experiment confirms the convergence rates
$\norm{\bsigma-\bsigma_h}_{0,\Omega}=O(h)$, $\norm{\jump{\hat u_h}}_{0,\cF_h}=O(h^{3/2})$ and 
suggests the stronger convergence
$\norm{\psi-\psi_h}_{1/2,\cG_h}=O(h^{3/2})$.

In Figure~\ref{fig0}, the errors 
$\norm{\psi-\psi_h}_{[0,1],1/2,\Gamma}$, $\norm{\bsigma-\bsigma_h}_{0,\Omega}$, $\norm{u-u_h}_{0,\Omega}$ and  
$\norm{\jump{\hat u_h}}_{0,\cF_h}$ are labeled  \text{``psi''}, \text{``sigma''}, \text{``u''}
and \text{``jump''} respectively and they are  represented  again versus the total number 
of unknowns on a double-logarithmic scale.
Here,
\[
   \norm{\psi-\psi_h}_{[0,1],1/2,\Gamma}
   := \Bigl(\norm{\psi-\psi_h}_{0,\Gamma}^2 + \|\psi-\psi_h\|_{0,\Gamma} |\psi-\psi_h|_{1,\Gamma}\Bigr)^{1/2}
\]
which, by interpolation, is an upper bound for $\norm{\psi - \psi_h}_{1/2,\Gamma}$
up to a constant factor.  The numerical results are in agreement with the convergence rates
$\norm{\bsigma-\bsigma_h}_{0,\Omega}=O(h)$, $\norm{u - u_h}_{0,\Omega}=O(h^2)$ and 
$\norm{\jump{\hat u_h}}_{0,\cF_h}=O(h^{3/2})$ obtained in Theorem \ref{mainConf}, and
indicate the stronger convergence $\norm{\psi-\psi_h}_{1/2,\Gamma}=O(h^{3/2})$.



\begin{thebibliography}{9}
%

\bibitem{CastilloCockburn}
{\sc Castillo, P., Cockburn, B., Perugia, I. and Sch\"{o}tzau, D.},
{\it An a priori error analysis of the local discontinuous {G}alerkin method for elliptic problems}.
SIAM J. Numer. Anal., vol. 38, pp. 1676--1706, (2000).

\bibitem{ChoulyH_12_NDD}
{\sc Chouly, F. and Heuer, N.},
{\it A Nitsche-based domain decomposition method for hypersingular integral equations}.
Numer. Math., vol. 121, pp. 705--729, (2012).

\bibitem{CockburnSayas}
{\sc Cockburn, B. and Sayas, F.-J.},
{\it The devising of symmetric couplings of boundary element and discontinuous {G}alerkin methods}.
IMA J. Numer. Anal., vol. 32, pp. 765--794, (2012).

\bibitem{DiPietroErn}
{\sc Di Pietro, D.N. and Ern, A.},
Mathematical Aspects of Discontinuous Galerkin Methods. 
Springer-Verlag Berlin Heidelberg 2012.
%
%
\bibitem{GaticaHeuerSayas}
{\sc Gatica, G.N., Heuer, N. and Sayas, F.-J.},
{\it A direct coupling of local discontinuous {G}alerkin and boundary methods}.
Mathematics of Computation, vol. 79, 271, pp. 1369--1394, (2010).

\bibitem{GaticaSayas}
{\sc Gatica, G.N. and Sayas, F.-J.},
{\it An a priori error analysis for the coupling of local discontinuous 
{G}alerkin and boundary element methods}.
Mathematics of Computation, vol. 75, 256, pp. 1669--1696, (2006).

\bibitem{NorbertSalim}
{\sc Heuer, N. and Meddahi, S.},
{\it Discontinuous {G}alerkin $hp$-{BEM} with quasi-uniform meshes}.
Numer. Math.  (appeared online), DOI 10.1007/s00211-013-0547-3.

\bibitem{HeuerSayas0}
{\sc Heuer, N. and Sayas, F.-J.},
{\it Crouzeix-{R}aviart boundary elements}.
Numer. Math., vol. 112, pp. 381--401, (2009).

\bibitem{HeuerSayas}
{\sc Heuer, N. and Sayas, F.-J.},
{\it Analysis of  a non-symmetric coupling of interior penalty {DG} and {BEM}}.
Submitted. arXiv:1111.2344v1.

\bibitem{McLean}
{\sc McLean, W.},
Strongly Elliptic Systems and Boundary Integral Equations.
Cambridge University Press, 2000.

\bibitem{salim}
{\sc Meddahi, S.,   Vald\'es, J., Men\'endez, O. and  P\'erez, P.},
{\it On the coupling of boundary integral and mixed finite element methods}.
 J. Comput. Appl. Math., vol. 69, 1, pp. 113--124, (1996).

\bibitem{Nedelec_82_IEN}
{\sc N{\'e}d{\'e}lec, J.-C.},
{\it Integral equations with nonintegrable kernels}.
 Integral Equations Operator Theory, vol. 5, pp. 562--572, (1982).

\bibitem{rt91}
\textsc{Roberts, J.E. and Thomas, J.-M.} (1991) Mixed and Hybrid Methods. 
In \textsc{P.G. Ciarlet} and \textsc{J.L. Lions} ed. {\em Handbook of Numerical Analysis}, vol. II, 
Finite Element Methods (Part 1), North-Holland, Amsterdam.

\bibitem{sauterSchwab}
{\sc Sauter, S.A.  and Schwab, C.},
Boundary Element Methods. Springer Series in Computational Mathematics, 39. Springer-Verlag, Berlin, 2011.
\end{thebibliography}
\end{document}